\newcounter{alph}
\newtheorem{theo}[alph]{Theorem}
\newtheorem{coro}[alph]{Corollary}
\numberwithin{equation}{section}
\newtheorem{cor}[equation]{Corollary}
\newtheorem{lem}[equation]{Lemma}
\newtheorem{prop}[equation]{Proposition}
\newtheorem{thm}[equation]{Theorem}
\theoremstyle{definition}
\newtheorem{exa}[equation]{Example}
\def\C{\mathbb C}
\def\N{\mathbb N}
\def\R{\mathbb R}
\def\H{\mathbb H}
\def\O{\mathbb O}
\def\A{\mathcal A}
\def\ve{\varepsilon}
\def\vf{\varphi}
\def\la{\langle}
\def\ra{\rangle}
\newcommand{\ess}{\operatorname{ess}}
\newcommand{\diam}{\operatorname{diam}}
\newcommand{\dive}{\operatorname{div}}
\newcommand{\dv}{\operatorname{dv}}
\newcommand{\dt}{\operatorname{dt}}
\newcommand{\id}{\operatorname{id}}
\newcommand{\inte}{\operatorname{int}}
\newcommand{\orb}{\operatorname{orb}}
\newcommand{\Ric}{\operatorname{Ric}}
\newcommand{\supp}{\operatorname{supp}}
\newcommand{\Lip}{\operatorname{Lip}}
\begin{document}


\title[Bottom of spectra and coverings of orbifolds]{Bottom of spectra and coverings \\ of orbifolds}
\author{Werner Ballmann}
\address
{WB: Max Planck Institute for Mathematics,
Vivatsgasse 7, 53111 Bonn}
\email{hwbllmnn\@@mpim-bonn.mpg.de}
\author{Panagiotis Polymerakis}
\address{PP: Max Planck Institute for Mathematics,
Vivatsgasse 7, 53111 Bonn}
\email{polymerp\@@mpim-bonn.mpg.de}

\dedicatory{Dedicated to Shiing-shen Chern, a great mathematician and a great man}

\thanks{\emph{Acknowledgments.}
We are grateful to the Max Planck Institute for Mathematics and the Hausdorff Center for Mathematics in Bonn for their support and hospitality.}

\date{\today}

\subjclass[2010]{58J50, 35P15, 53C20}
\keywords{Orbifold, bottom of spectrum, Riemannian covering, geometrically finite, conformally compact}

\begin{abstract}
We discuss the behaviour of the bottom of the spectrum of scalar Schr\"odinger operators
under Riemannian coverings of orbifolds.
We apply our results to geometrically finite and to conformally compact orbifolds.
\end{abstract}

\maketitle

\tableofcontents

\section{Introduction}
\label{intro}

Spectral invariants of orbifolds are a classical issue in number theory,
but have not yet attracted so much attention in Riemannian geometry.
The prime examples in number theory are closed or finite volume quotients
of Riemannian symmetric spaces of non-compact type
with the modular surface as the most classical one.
Our investigations were motivated by our article \cite{BP2},
in which the bottom of the spectrum of an orbifold quotient occurred in one of the applications
and where the value of that number was in question
(answered by \cref{amen}.\ref{tame} below).
Most of our results are known in the case of manifolds,
the point of the present article is their extension to orbifolds.
These extensions do not come for free,
and even some of the fundamentals have to be prepared appropriately.

We consider a covering $p\colon O_1\to O_0$ of Riemannian orbifolds,
where $O_0$ is connected,
a scalar Schr\"odinger operator $S_0=\Delta+V_0$ on $O_0$ and its lift $S_1$ to $O_1$.
We assume throughout that $S_0$ is bounded from below (on $C^\infty_c(O_0)$).
The orbifold fundamental group $\Gamma_0=\pi_1^{\orb}(O_0)$ of $O_0$ acts on the fibers of $p$,
and we say that the covering is \emph{amenable}
if the action on fibers over regular points of $O_0$ is amenable.
It is noteworthy that a normal covering between connected orbifolds is amenable if and only if its deck transformation group is amenable.
The reader not familiar with these notions is referred to the body of the text below,
where we discuss them in some detail.

 For a Lipschitz function $f\ne0$ with compact support on an orbifold $O$
 and a scalar Schr\"odinger operator $S=\Delta+V$ on $O$,
\begin{align}\label{rayq}
	R_{S}(f) = \frac{\int_{O} (|\nabla f|^2+Vf^2)}{\int_{O}f^2}
\end{align}
is called the \emph{Rayleigh quotient} of $f$ (with respect to $S$).
We then call
\begin{align}\label{bspec}
	\lambda_0(S,O) = \inf R_S(f)
\end{align}
the \emph{bottom of the spectrum of $S$},
where the infimum is taken over all non-zero $f\in C^\infty_c(O)$,
or, equivalently, over all non-zero Lipschitz functions $f$ on $O$ with compact support.
In the case of the Laplacian, we also write $R(f)$ and $\lambda_0(O)$ instead of $R_{\Delta}(f)$ and $\lambda_0(\Delta,O)$, respectively.

If $S$ is bounded from below, that is, $\lambda_0(S,O)>-\infty$,
then the Friedrichs extension $\bar S$ of $S$ in $L^2(O)$
is defined and $\lambda_0(S,O)$ is equal to the bottom of the spectrum $\sigma(S,O)$ of $\bar S$.
To state our main result, we will also need the notion of
the \emph{bottom $\inf\sigma_{\ess}(S,O)$ of the essential spectrum of $S$}.
It is given by
\begin{align}\label{bespec}
	\lambda_{\ess}(S,O) = \sup \lambda_0(S,O\setminus K),
\end{align}
where the supremum is taken over all compact subsets $K$ of $O$.

In the case of coverings as above, we always have that
\begin{align}\label{monot}
	\lambda_0(S_1,O_1) \ge \lambda_0(S_0,O_0),
\end{align}
as we will see in \cref{secele}.
This result is known in many cases; see our survey article \cite{BP1} for references.

By \eqref{monot}, $S_1$ is also bounded from below
so that both, $\lambda_{0}(S_1,O_1)$ and $\lambda_{0}(S_0,O_0)$,
realize the bottom of the spectrum of their respective Friedrichs extension. 

\begin{theo}\label{amen}
Let $p\colon O_1\to O_0$ be a Riemannian covering of orbifolds, where $O_0$ is connected,
and $S_0$ and $S_1$ be compatible scalar Schr\"odinger operators on $O_0$ and $O_1$, respectively,
where $S_0$ is bounded from below.
Then we have:
\begin{enumerate}
\item\label{tame}
if $p$ is amenable, then $\lambda_0(S_1,O_1)=\lambda_0(S_0,O_0)$;
\item\label{name}
if $\lambda_0(S_1,O_1)=\lambda_0(S_0,O_0)<\lambda_{\ess}(S_0,O_0)$,
then $p$ is amenable.
\end{enumerate}
\end{theo}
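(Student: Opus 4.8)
The plan is to adapt the classical manifold-theoretic argument relating amenability of a covering to equality of the bottoms of the spectrum, paying attention to the orbifold subtleties (singular locus, isotropy groups, the fact that $\pi_1^{\orb}(O_0)$ acts on fibers with possibly non-trivial stabilizers). For part \eqref{tame}, I would combine the monotonicity \eqref{monot} with a Følner-type construction: given a Følner set $F$ in the fiber over a regular point, and a test function $f$ on $O_0$ with Rayleigh quotient close to $\lambda_0(S_0,O_0)$, one lifts $f$ via the covering and averages its translates over $F$ to build a compactly supported Lipschitz function $\tilde f$ on $O_1$ whose Rayleigh quotient is close to that of $f$. The key point is that the ``boundary terms'' in $\int|\nabla\tilde f|^2$ are controlled by $|\partial F|/|F|$, which is small by the Følner condition, so $\lambda_0(S_1,O_1)\le\lambda_0(S_0,O_0)+\ve$; combined with \eqref{monot} this gives equality. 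The orbifold-specific care is that the construction should be carried out away from the singular locus (or using charts), and that amenability of the fiber action over regular points is exactly what delivers the Følner sets needed — this is why the definition of amenable covering is phrased that way.

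For part \eqref{name}, the statement is the converse, so I would argue by contraposition: assuming the covering is \emph{not} amenable, I want to show $\lambda_0(S_1,O_1)>\lambda_0(S_0,O_0)$ or else $\lambda_0(S_0,O_0)\ge\lambda_{\ess}(S_0,O_0)$. The standard route (following Brooks, and its refinements) is spectral-theoretic: if $\lambda_0(S_1,O_1)=\lambda_0(S_0,O_0)=:\lambda$ and $\lambda<\lambda_{\ess}(S_0,O_0)$, then $\lambda$ is an isolated eigenvalue of finite multiplicity of $\bar S_0$ with an $L^2$ eigenfunction $\phi_0>0$ (positivity via the ground-state property on the connected orbifold $O_0$). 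One then shows that the lift $\phi_1$ of $\phi_0$ to $O_1$, although not in $L^2(O_1)$ in general, is a positive $\lambda$-harmonic function for $S_1$, and the hypothesis $\lambda_0(S_1,O_1)=\lambda$ forces, via a Cheeger-type inequality together with an approximation of $\phi_1$ by compactly supported functions realizing Rayleigh quotients near $\lambda$, that the ``isoperimetric/combinatorial'' constant of the fiber action vanishes — i.e.\ the action is amenable, a contradiction. Concretely I would use the characterization of $\lambda_0$ by the existence of positive $\lambda$-harmonic functions together with a co-area argument relating level sets of a modified test function to Følner sets in the fiber.

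The main obstacle I anticipate is part \eqref{name}, specifically the passage from ``$\lambda_0$ is not increased'' to ``Følner sets exist in the fiber.'' In the manifold case this rests on having a positive ground state $\phi_0$ that is genuinely in $L^2$ and on $h$-transforming $S_0$ by $\phi_0$ to reduce to the case $V_0=0$, $\lambda=0$ (so that constants are $\phi_0^2$-harmonic); then one runs a discretization comparing the orbifold covering to the combinatorial covering of graphs and invokes the graph-theoretic equivalence ``non-amenable $\Rightarrow$ positive Cheeger constant $\Rightarrow$ spectral gap.'' Over orbifolds, the ground-state transform and the discretization must be set up so that the singular locus and the isotropy groups do not obstruct the comparison; I expect this to require the orbifold versions of elliptic regularity, of the Harnack inequality, and of a ``rough isometry'' between $O_1$ (with its fiber structure over $O_0$) and a graph built from a fundamental domain — all of which should be available, but need to be assembled carefully. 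The hypothesis $\lambda<\lambda_{\ess}(S_0,O_0)$ is precisely what guarantees the $L^2$ ground state needed to start this machine.
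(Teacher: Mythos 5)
For part \eqref{tame}, your plan matches the paper's: given a Følner set $F\subseteq p^{-1}(x_0)$, the paper multiplies the lift $f_1=f\circ p$ of a near-optimal $f\in C^\infty_c(O_0)$ by a cutoff $\chi=\sum_{y\in F}\vf_y$ built from a fiber-indexed partition of unity, and controls the Rayleigh quotient of $\chi f_1$ Dirichlet-domain by Dirichlet-domain: on domains where $\chi\equiv 1$ the contribution is exactly $R_{S_0}(f)$, and the domains where $\chi$ is non-constant are few compared to the rest, precisely by the Følner condition. Combined with \eqref{monot} this gives equality, as you say.

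For part \eqref{name} your proposal diverges from the paper in two substantive ways, and this is where the gaps are. First, you invoke a discretization (rough isometry to a graph, then the graph-theoretic equivalence of amenability and the Cheeger constant). The paper does not discretize at all: it works directly with the orbifold, using Buser's Lemma~\ref{bulem} and the Bishop--Gromov comparison \ref{bgv} on the local charts, together with a covering of $O_1$ by metric balls centered at a separated net, to prove (Proposition~\ref{Buser}) that $h(O_1)=0$ yields bounded open sets $A\subseteq O_1$ with $|U_r(\partial A)|<\ve|A|$; the Følner set is then read off in the fiber $p^{-1}(x_0)$ simply as $F=p^{-1}(x_0)\cap U_r(A)$, using that $A$ is a disjoint-up-to-measure-zero union of Dirichlet domains $D_y$ of equal volume. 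You assert that the small Rayleigh quotient ``forces, via a Cheeger-type inequality \ldots that the isoperimetric/combinatorial constant of the fiber action vanishes,'' but you give no mechanism for extracting a Følner set; that extraction is the heart of the matter and is exactly what the Dirichlet-domain bookkeeping accomplishes. Second, and structurally more important, you do not see the reduction to compact bases: Proposition~\ref{amecom} shows $p$ is amenable if and only if each restriction $p\colon p^{-1}(K_n)\to K_n$ over an exhaustion by compact domains is amenable, and Theorem~\ref{amecb} (proved by doubling $K_n$ and appealing to the closed case Theorem~\ref{amecc}) handles each compact piece. Without this reduction, your route faces the non-compactness and infinite volume of $O_1$ head-on, which is precisely what you flag as the hard part.

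Finally, you register only one of the two uses of $\lambda_0<\lambda_{\ess}$. You correctly note it produces the $L^2$ ground state $\vf_0>0$ and lets you renormalize $S_0$ and $S_1$ to diffusion operators. But the paper also needs Lemma~\ref{min seq}, which uses the same hypothesis to force any minimizing sequence of compactly supported functions for $\lambda_0(S_0,O_0)$ to converge in $L^2$ to $\vf_0$. The contradiction in the paper comes from pushing down a minimizing sequence $(f_n')$ on $O_1$ that has been arranged (via the positive spectral gap $\lambda_0(p^{-1}(K))>0$, which is what non-amenability over a compact $K$ buys you through Theorem~\ref{amecb}) to vanish on $p^{-1}(K_0)$ for a fixed compact $K_0$ of positive measure; the pushdowns $(g_n)$ then vanish on $K_0$ but must converge to the positive $\vf_0$, which is absurd. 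Your sketch does not articulate this final step, and the positive ``$\lambda$-harmonic'' lift $\phi_1$ you mention (which is typically not in $L^2$) plays no direct role in the paper's argument: it is the minimizing sequence, not the ground state's lift, that is pushed around.
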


The study of the behaviour of $\lambda_0$ under Riemannian coverings was initiated by Brooks \cite{Br1,Br2}.
For connected Riemannian manifolds, \cref{amen}.\ref{tame} and \ref{amen}.\ref{name}
are \cite[Theorem 1.2]{BMP1} and \cite[Theorem 4.1]{Po2}.
(The latter is also \cite[Theorem 1.2]{Po3}.)
For further comments and references, we refer to \cite{BP1}.

Note that we do not assume that $O_1$ is connected.
This is important in our proof, where the case that $O_1$ is--possibly--not connected
occurs at an intermediate stage,
but it also seems important in some applications.
We do assume, however, that the orbifolds considered here are second countable
so that they have at most countably many connected components.

\begin{coro}\label{corob}
If $O_0$ contains a compact domain $K$ such that the fundamental groups
of the connected components of the complement $O_0\setminus K$ are amenable,
then there are the following two cases:
\begin{enumerate}
\item\label{ckka}
if $\lambda_0(S_0,O_0)<\lambda_{\ess}(S_0,O_0)$,
then $\lambda_0(S_1,O_1) = \lambda_0(S_0,O_0)$ if and only if $p$ is amenable;
\item\label{ckkb}
if $\lambda_0(S_0,O_0)=\lambda_{\ess}(S_0,O_0)$, then $\lambda_0(S_1,O_1) = \lambda_0(S_0,O_0)$.
\end{enumerate}
\end{coro}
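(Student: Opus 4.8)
The plan is to deduce both cases from \cref{amen}. For the first case there is essentially nothing new to prove: since $\lambda_0(S_0,O_0)<\lambda_{\ess}(S_0,O_0)$, the implication ``$p$ amenable $\Rightarrow$ $\lambda_0(S_1,O_1)=\lambda_0(S_0,O_0)$'' is precisely \cref{amen}.\ref{tame}, and the reverse implication is \cref{amen}.\ref{name} applied to the equality $\lambda_0(S_1,O_1)=\lambda_0(S_0,O_0)$, which by the hypothesis of this case lies strictly below $\lambda_{\ess}(S_0,O_0)$. The compact domain $K$ is not used here at all.

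For the second case I would argue as follows. Note first that $O_0\setminus K\neq\emptyset$: otherwise $O_0=K$ is compact and $\lambda_{\ess}(S_0,O_0)=+\infty>\lambda_0(S_0,O_0)$, contradicting the hypothesis $\lambda_0(S_0,O_0)=\lambda_{\ess}(S_0,O_0)$. Next, $\lambda_0(S_0,O_0\setminus K)=\lambda_0(S_0,O_0)$: extending Lipschitz functions with compact support in the open set $O_0\setminus K$ by zero gives $\lambda_0(S_0,O_0\setminus K)\ge\lambda_0(S_0,O_0)$, while $\lambda_0(S_0,O_0\setminus K)\le\lambda_{\ess}(S_0,O_0)$ directly from \eqref{bespec}, and the two right-hand sides coincide by assumption; in particular $S_0$ is bounded from below on each connected component of $O_0\setminus K$. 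Now let $U$ be a connected component of $O_0\setminus K$. Then $p^{-1}(U)\to U$ is again a Riemannian covering of orbifolds with connected base, and it is amenable: the orbifold fundamental group $\pi_1^{\orb}(U)$ is amenable by hypothesis, and any action of an amenable group on a set admits an invariant mean, so in particular does the action on the fiber over a regular point of $U$. Hence \cref{amen}.\ref{tame}, applied to $p^{-1}(U)\to U$ with the restrictions of $S_0$ and $S_1$, gives $\lambda_0(S_1,p^{-1}(U))=\lambda_0(S_0,U)$. Since $p^{-1}(U)$ is open in $O_1$, we get $\lambda_0(S_1,O_1)\le\lambda_0(S_1,p^{-1}(U))=\lambda_0(S_0,U)$. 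Taking the infimum over the connected components $U$ of $O_0\setminus K$, and using that the bottom of the spectrum of a possibly disconnected orbifold is the infimum of those of its connected components, we obtain $\lambda_0(S_1,O_1)\le\lambda_0(S_0,O_0\setminus K)=\lambda_0(S_0,O_0)$. Combined with \eqref{monot}, this forces $\lambda_0(S_1,O_1)=\lambda_0(S_0,O_0)$.

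The main point requiring care is the assertion that a Riemannian covering of a connected orbifold with amenable orbifold fundamental group is amenable; this rests on the description of the fiber over a regular point as a $\pi_1^{\orb}$-set and on the elementary permanence of amenability under passing to an arbitrary action, both of which belong to the orbifold foundations developed in the body of the paper. The remaining ingredients---the Dirichlet monotonicity of $\lambda_0$, the formula for $\lambda_0$ on a disjoint union, and the fact that restricting a covering to the preimage of an open set is again a covering---are elementary and already available; second countability ensures that $O_0\setminus K$ has at most countably many components and that any compact set meets only finitely many of them, so the bookkeeping with components causes no difficulty.
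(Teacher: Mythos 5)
Your proof is correct and follows the approach the paper intends: the paper only refers the reader to the manifold case \cite[Corollary D]{BP1} and notes that the proof extends via \cref{amen}, and your argument carries this out explicitly. In particular, the key steps of case~\eqref{ckkb}—that $\lambda_0(S_0,O_0\setminus K)=\lambda_0(S_0,O_0)$ under the hypothesis $\lambda_0=\lambda_{\ess}$, that $p^{-1}(U)\to U$ is amenable over each component $U$ because any action of the amenable group $\pi_1^{\orb}(U)$ is amenable, and the combination of \cref{amen}.\ref{tame} with \eqref{monot}—are exactly the intended reduction.
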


For manifolds, \cref{corob} is \cite[Corollary D]{BP1}.
Using \cref{amen}, the proof there extends to orbifolds.

Let $O$ be a complete and connected Riemannian orbifold
with sectional curvature $-b^2\le K_O\le-a^2$.
Then $O$ is a quotient $\Gamma\backslash X$,
where $X$ is a simply connected Riemannian manifold
and $\Gamma$ a properly discontinuous group of isometries of $X$.
For such an $O$,
let $\Omega$ be the complement of the limit set of $\Gamma$ in the sphere at infinity of $X$.
Following Bowditch \cite{Bo2}, we say that $O$ and $\Gamma$ are \emph{geometrically finite}
if $\Gamma\backslash(X\cup\Omega)$ has finitely many ends and each of them is parabolic
(in the sense of \cite[Section 5.1]{Bo2}).

We say that a Riemannian orbifold $O$ is \emph{hyperbolic}
if it can be written as a quotient $\Gamma\backslash X$,
where $X$ is one of the hyperbolic spaces $H_F^n$ with $F\in\{\R,\C,\H,\O\}$,
endowed with its canonical Riemannian metric, which is unique up to scale.
We normalize it so that $\max K_X=-1$.
Then the number
\begin{align}\label{volent}
	h_X=m+\dim_\R F-2,
\end{align}
where $m=\dim X=n\dim_\R F$,
is equal to the asymptotic volume growth of $X$.
For any hyperbolic space $X$ and non-compact, geometrically finite orbifold $O = \Gamma\backslash X$,
we have
\begin{align}\label{equal}
	\lambda_{\ess}(O) = \lambda_0(X) = h_X^2/4.
\end{align}
For manifolds, the lower estimate $\lambda_{\ess}(O)\ge h_X^2/4$ 
is contained in Hamen\-st\"adt \cite[p.\,282:\,Corollary]{Ham},
the equality is explained in J.\,Li \cite[Remark 1.2]{Li}.
It is likely that their arguments extend to the orbifold case.
However, the orbifold case is also contained in \cite[Theorem B]{BP3},
which contains corresponding estimates for more general kinds of operators over orbifolds,
including the Hodge-Laplacian on differential forms.

\begin{coro}\label{corest}
Let $p\colon O_1\to O_0$ be a Riemannian covering
of complete and connected Riemannian orbifolds.
Assume that there is a non-compact and geometrically finite hyperbolic orbifold $O_0'=\Gamma\backslash X$
such that $O_0\setminus K$ is isometric to $O_0'\setminus K'$
for some compact domains $K\subseteq O_0$ and $K'\subseteq O_0'$.
Then there are the following two cases:
\begin{enumerate}
\item\label{kka}
if $\lambda_0(O_0)<h_X^2/4$,
then $\lambda_0(O_1) = \lambda_0(O_0)$ if and only if $p$ is amenable;
\item\label{kkb}
if $\lambda_0(O_0)=h_X^2/4$, then $\lambda_0(O_1) = \lambda_0(O_0)$.
\end{enumerate}
\end{coro}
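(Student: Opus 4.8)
The plan is to derive \cref{corest} from \cref{amen} and \eqref{equal}, the only genuinely new ingredient being a bound for the bottom of the spectrum of the coverings of the ends of a geometrically finite hyperbolic orbifold. First I would note that $\lambda_{\ess}(O_0)=h_X^2/4$: in \eqref{bespec} the supremum may be restricted to compact subsets of $O_0$ containing $K$, so $\lambda_{\ess}(O_0)$ depends only on the Riemannian orbifold $O_0\setminus K$, which is isometric to $O_0'\setminus K'$; hence $\lambda_{\ess}(O_0)=\lambda_{\ess}(O_0')$, and this equals $h_X^2/4$ by \eqref{equal}. Part \ref{kka} is then immediate from \cref{amen}: if $\lambda_0(O_0)<h_X^2/4=\lambda_{\ess}(O_0)$ and $p$ is amenable, then $\lambda_0(O_1)=\lambda_0(O_0)$ by \cref{amen}.\ref{tame}; conversely, if $\lambda_0(O_1)=\lambda_0(O_0)$, then $p$ is amenable by \cref{amen}.\ref{name}, as $\lambda_0(O_0)<\lambda_{\ess}(O_0)$.

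For part \ref{kkb}, assume $\lambda_0(O_0)=h_X^2/4$. By \eqref{monot} we have $\lambda_0(O_1)\ge h_X^2/4$, so it remains to prove $\lambda_0(O_1)\le h_X^2/4$. Replacing $K$ and $K'$ by larger compact domains, I may assume that $O_0'\setminus K'$, and hence $O_0\setminus K$, is a finite disjoint union of standard neighbourhoods of the ends of the geometrically finite orbifold $O_0'$, each of which is either a cusp neighbourhood or a funnel neighbourhood. Since $O_0'$ is non-compact there is at least one such neighbourhood; fix one, call it $C$, and let $\tilde C$ be a connected component of the open set $p^{-1}(C)\subseteq O_1$, which is non-empty since $p$ is surjective. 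Because $C^\infty_c(\tilde C)\subseteq C^\infty_c(O_1)$ we have $\lambda_0(O_1)\le\lambda_0(\tilde C)$, so it suffices to prove $\lambda_0(\tilde C)\le h_X^2/4$.

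Suppose first that $C$ is a cusp neighbourhood. Then the universal orbifold covering of $C$ is isometric to a horoball $B$ in $X$; since $B$ is simply connected, the covering $B\to C$ factors through the covering $\tilde C\to C$, so $\lambda_0(\tilde C)\le\lambda_0(B)$ by \eqref{monot}. A horoball in $X$ contains metric balls of arbitrarily large radius, and the bottom of the spectrum of a metric ball of radius $R$ in $X$ decreases to $\lambda_0(X)=h_X^2/4$ as $R\to\infty$; hence $\lambda_0(B)\le h_X^2/4$. Suppose next that $C$ is a funnel neighbourhood. Here I would argue directly on $\tilde C$: as one runs out to the conformal boundary inside $C$, both the distance to $K$ and the injectivity radius tend to infinity --- this is where geometric finiteness is used, since it guarantees that such an end genuinely faces the domain of discontinuity of $\Gamma$, on which $\Gamma$ acts properly discontinuously --- so for every $R>0$ there is a manifold point $x\in C$ for which the metric ball $B(x,R)$ is contained in $C$ and is isometric to a radius-$R$ ball of $X$. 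Being simply connected, $B(x,R)$ is evenly covered by the covering $\tilde C\to C$, so $\tilde C$ contains an isometric copy of a radius-$R$ ball of $X$; letting $R\to\infty$ gives $\lambda_0(\tilde C)\le h_X^2/4$. In either case $\lambda_0(O_1)\le\lambda_0(\tilde C)\le h_X^2/4$, which combined with \eqref{monot} gives $\lambda_0(O_1)=h_X^2/4=\lambda_0(O_0)$, as required.

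The main obstacle is the funnel case of part \ref{kkb}. There the covering $\tilde C\to C$ need not be amenable --- the conformal boundary of $O_0'$ may well have non-amenable fundamental group --- so \cref{amen} is not available, and one has to exploit the specific feature that, in contrast to a cusp, a geometrically finite funnel end opens up at infinity, so that the injectivity radius blows up there and arbitrarily large balls of $X$ persist in every covering of the end. Carrying out this injectivity-radius estimate carefully for all of the hyperbolic spaces $H_F^n$, together with the preliminary reduction of $O_0'\setminus K'$ to a union of standard end neighbourhoods, are the two points where a little genuine work is needed beyond citing \cref{amen} and \eqref{equal}.
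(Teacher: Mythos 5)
Your argument is correct, and for part \ref{kka} it is exactly the paper's: with $\lambda_{\ess}(O_0)=\lambda_{\ess}(O_0')=h_X^2/4$ established via \eqref{bespec} (the supremum may be restricted to compact sets containing $K$) and \eqref{equal}, this is a direct application of \cref{amen}.\ref{tame} and \ref{amen}.\ref{name}.

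For part \ref{kkb}, you supply more detail than the paper, which simply declares the corollary an ``immediate consequence of \cref{amen} and \eqref{equal}.'' Since you cannot invoke \cref{amen}.\ref{tame} here (the action on the fibre over a funnel end need not be amenable), some genuine geometric input about the ends is indeed required to bound $\lambda_0(O_1)$ from above by $h_X^2/4$, and your large-ball argument is the right one. One remark: your two cases can be unified. The universal cover $\hat E$ of \emph{any} end $E$ of $O_0'=\Gamma\backslash X$ --- a horoball for a cusp, a (convex) half-space of $X$ for a funnel --- is a simply connected open subset of $X$ containing metric balls of arbitrarily large radius, so $\lambda_0(\hat E)\le h_X^2/4$. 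Since $\hat E\to E$ factors through any connected component $\tilde E$ of $p^{-1}(E)$, the monotonicity \eqref{monot} gives $\lambda_0(O_1)\le\lambda_0(\tilde E)\le\lambda_0(\hat E)\le h_X^2/4$, with no need for the more delicate injectivity-radius estimate on the funnel itself or the separate treatment of singular points of $\partial(\Gamma\backslash(X\cup\Omega))$. This also matches the mechanism the paper uses in the proof of \cref{cococ}, where the chain $\lambda_0(O_1)\le\lambda_0(\tilde O)\le\lambda_{\ess}(\tilde O)$ feeds into \eqref{coco}; in the geometrically finite setting one works with the universal covers of the individual ends rather than of all of $O_0$.
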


For manifolds, \cref{corest} is contained in \cite{BP1}.
Using the characterization of $\lambda_{\ess}(O_0)$ in \eqref{bespec},
\cref{corest} is an immediate consequence of \cref{amen} and \eqref{equal}.

Consider now a compact and connected Riemannian orbifold $P$ with boundary $\partial P\ne\emptyset$
and a smooth non-negative function $\rho$ on $N$ defining $\partial N$, that is,
\begin{align}
  \partial P = \{\rho=0\} \hspace{2mm}\text{and}\hspace{2mm} \partial_\nu\rho > 0
\end{align}
along $\partial P$,
where $\nu$ denotes the inner unit normal of $P$ along $\partial P$ with respect to the given metric $h$ on $P$.
Let $O$ be the interior of $P$, endowed with the conformally equivalent metric 
\begin{align}\label{mazzeo}
   g = \rho^{-2}h
\end{align}
on $O$.
The metric $g$ is complete since any point in $O$ has infinite distance to $\partial P$ with respect to $g$.
Metrics of this kind were introduced by Mazzeo, who called them \emph{conformally compact}.
In \cite[Theorem 1.3]{Ma} he determines the essential spectrum of the Hodge-Laplacian of conformally compact manifolds.
In particular, for functions he obtains that the essential spectrum of $g$ is $[a^2(m-1)^2/4,\infty)$,
where $a=\min\partial_\nu\rho>0$ and $m=\dim O$.
It seems that his arguments also go through for orbifolds.
However, by an easy argument, we will obtain 
\begin{align}\label{coco}
	\lambda_{\ess}(\tilde O) \le \lambda_{\ess}(O) = (m-1)^2a^2/4,
\end{align}
for any connected and conformally compact Riemannian orbifold of dimension $m$,
where $a=\min\partial_\nu\rho$ as above and $\tilde O$ denotes the universal covering space of $O$.
Note the converse monotonicity in \eqref{coco} in comparison to \eqref{monot}.

\begin{coro}\label{cococ}
Let $p\colon O_1\to O_0$ be a Riemannian covering of orbifolds of dimension $m$.
Assume that $O_0$ is conformally compact with $a=\min\partial_\nu\rho$ as above.
Then we have:
\begin{enumerate}
\item\label{cocoa}
if $\lambda_0(O_0)<(m-1)^2a^2/4$, then $\lambda_0(O_1) = \lambda_0(O_0)$
if and only if $p$ is amenable;
\item\label{cocob}
if $\lambda_0(O_0)=(m-1)^2a^2/4$, then $\lambda_0(O_1) = \lambda_0(O_0)$.
\end{enumerate}
\end{coro}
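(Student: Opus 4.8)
The plan is to treat part \ref{cocoa} as a formal consequence of \cref{amen} and the computation \eqref{coco} of $\lambda_{\ess}$, and to handle the borderline part \ref{cocob} by constructing test functions concentrated near $\partial P$. Throughout I would take $S_0=S_1=\Delta$. First I would note that, since the bottom of the spectrum never exceeds the bottom of the essential spectrum, \eqref{coco} gives $\lambda_0(O_0)\le\lambda_{\ess}(O_0)=(m-1)^2a^2/4$, so the two cases listed in the statement are exhaustive.

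For part \ref{cocoa}, the hypothesis $\lambda_0(O_0)<(m-1)^2a^2/4$ together with \eqref{coco} is exactly the assertion $\lambda_0(O_0)<\lambda_{\ess}(O_0)$. If $p$ is amenable, then $\lambda_0(O_1)=\lambda_0(O_0)$ by \cref{amen}.\ref{tame}. Conversely, if $\lambda_0(O_1)=\lambda_0(O_0)$, then $\lambda_0(O_1)=\lambda_0(O_0)<\lambda_{\ess}(O_0)$, so $p$ is amenable by \cref{amen}.\ref{name}. This proves \ref{cocoa} with no further input.

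For part \ref{cocob} I would use \eqref{monot} to reduce the claim to $\lambda_0(O_1)\le(m-1)^2a^2/4$; since this is a statement about $\lambda_0$ rather than $\lambda_{\ess}$, it is enough to produce, for every $\ve>0$, a non-zero Lipschitz function with compact support somewhere on $O_1$ whose Rayleigh quotient is $<(m-1)^2a^2/4+\ve$. I would fix a point $x_0\in\partial P$ at which $\partial_\nu\rho$ attains its minimum $a$, a small connected neighbourhood $W$ of $x_0$ in $\partial P$, and, for small $\delta>0$, the collar $U_0=W\times(0,\delta)$ inside $O_0$ (the part of $\{\rho<\delta\}$ lying over $W$ under the collar identification), which carries the metric $\rho^{-2}h$ with $|\nabla\rho|_h$ as close to $a$ as we wish. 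Because $(0,\delta)$ is contractible, every connected component $U_1$ of $p^{-1}(U_0)$ has the form $\hat W\times(0,\delta)$ for a connected covering orbifold $\hat W\to W$, and carries the pullback of $\rho^{-2}h$. Fixing a non-zero Lipschitz function $\phi$ with compact support in the interior of $\hat W$ and a Lipschitz function $\psi$ with compact support in $(0,\delta)$, I would take $f(\hat y,\rho)=\phi(\hat y)\psi(\rho)$ and, keeping track of the conformal factor in the gradient term and the volume element, obtain an estimate of the form
\begin{align*}
 R(f)\ \le\ (a+\ve')^2\,\frac{\int_0^\delta \rho^{\,2-m}\,\psi'(\rho)^2\,d\rho}{\int_0^\delta \rho^{\,-m}\,\psi(\rho)^2\,d\rho}\ +\ C(\phi)\,\delta^2,
\end{align*}
where $\ve'\to0$ as $W$ and $\delta$ shrink and $C(\phi)$ depends only on $\phi$. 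The substitution $\rho=e^{-t}$ turns the displayed quotient into a weighted one-dimensional Rayleigh quotient on a half-line, whose infimum over compactly supported functions is $(m-1)^2/4$ by an elementary computation (complete the square after substituting $\psi=e^{-(m-1)t/2}u$). Choosing first $\psi$ with quotient close to this infimum, then $\delta$ small, and letting $\ve'\to0$, I would make $R(f)<(m-1)^2a^2/4+\ve$; hence $\lambda_0(O_1)\le(m-1)^2a^2/4=\lambda_0(O_0)$, which together with \eqref{monot} gives equality and proves \ref{cocob}.

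The step I expect to need real care is the estimate on $R(f)$: one has to control the $\rho$- and $\hat y$-dependence of $|\nabla\rho|_h$ and of the slice metrics of $h$ near $x_0$ and, in the orbifold setting, allow $W$ and $\hat W$ to carry orbifold points, so that the construction takes place in the orbifold framework set up in the earlier sections; this is where the corollary, unlike its manifold analogue, is not entirely formal. An alternative for \ref{cocob} is to push the supports of the $f$'s out towards $\partial P$ and thereby bound $\lambda_{\ess}(O_1)$ from above, but the direct $\lambda_0$-estimate above avoids the extra bookkeeping of keeping the supports off a prescribed compact set.
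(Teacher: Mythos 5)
Part \ref{cocoa} of your argument is exactly the paper's: combining \eqref{coco} (which gives $\lambda_{\ess}(O_0)=(m-1)^2a^2/4$) with the two halves of \cref{amen}. That part is correct and needs no comment.

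For part \ref{cocob} your argument is correct but does considerably more work than the paper, and the extra work stems from not exploiting the full content of \eqref{coco}. Notice that \eqref{coco} is stated not only for $O=O_0$ but also for its universal covering space: $\lambda_{\ess}(\tilde O)\le\lambda_{\ess}(O_0)=(m-1)^2a^2/4$. Since $\tilde O$ covers every connected component $O_1'$ of $O_1$, the monotonicity \eqref{monot} applied to $\tilde O\to O_1'$ gives $\lambda_0(O_1')\le\lambda_0(\tilde O)$, hence $\lambda_0(O_1)\le\lambda_0(\tilde O)\le\lambda_{\ess}(\tilde O)\le(m-1)^2a^2/4=\lambda_0(O_0)$, and the reverse inequality is \eqref{monot} applied to $p$ itself. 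This is the paper's one-line derivation. Your construction of product test functions $\phi(\hat y)\psi(\rho)$ concentrated near a boundary point where $\partial_\nu\rho=a$, followed by the substitution $\rho=e^{-t}$ and the one-dimensional Rayleigh-quotient computation, is in effect a from-scratch re-proof (now carried out on $O_1$ instead of $\tilde O$) of the estimate that \eqref{coco} already records. The estimates you write down are in outline correct—and the point that the conclusion holds component by component on $p^{-1}(U_0)$, with each component of the form $\hat W\times(0,\delta)$ since $(0,\delta)$ is contractible, is sound also in the orbifold setting—but you would then have to make rigorous precisely the control of $|\nabla\rho|_h$ and the slice metrics that the paper has already packaged into the Cheng-comparison proof of \eqref{coco}. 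In short: nothing is wrong, but your route re-derives \eqref{coco} rather than citing it, and the observation that \eqref{coco} already applies to $\tilde O$ makes part \ref{cocob} purely formal.
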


For manifolds, \cref{cococ} is \cite[Theorem 1.10]{BMP2}.
\cref{cococ} is an immediate consequence of Theorems \ref{amen} and \eqref{coco},
using that $\lambda_0(O_1)\le\lambda_0(\tilde O)\le\lambda_{\ess}(\tilde O)$
for the second assertion.

\subsection{Structure of the paper}
\label{sustru}
In Sections \ref{secor}--\ref{secame}, we discuss the structure of orbifolds;
in particular, what we need about the geometry of Riemannian orbifolds,
the analysis of Schr\"odinger operators on orbifolds, and coverings of orbifolds.
In \cref{suscut},
we discuss also the Bishop-Gromov volume and Cheng eigenvalue comparison theorems.
In the short \cref{secele},
we show the monotonicity \eqref{monot} of the bottom of the spectrum under coverings.
In Sections \ref{secsta}--\ref{secgene}, we present the proof of \cref{amen}.
The proof of \eqref{coco} is contained in \cref{seccc}.
 
\section{Preliminaries on orbifolds}
\label{secor}
Our exposition of orbifolds follows mostly \cite[Chapter 13]{Th};
further good references are \cite[Section G]{BH}, \cite{Ha},
\cite[Chapter 6]{Ka}, \cite[Chapter 13]{Ra}, and \cite[\S 2]{Sc}.

An  \emph{($m$-dimensional orbifold) chart} of a Hausdorff space $O$
consists of an open subset $U$ of $O$,
a connected $m$-dimensional manifold $\hat U$ with boundary (possibly empty),
a finite subgroup $G$ of the diffeomorphism group of $\hat U$,
and a continuous map $\pi\colon\hat U\to U$ such that
$\pi$ induces a homeomorphism $G\backslash\hat U\to U$.
The information about such a chart is captured by the diagram
\begin{equation}\label{chartdia}
\begin{tikzcd}
	\hat U \arrow[d,"\pi"'] \arrow[dr] & \\
	U & G\backslash\hat U  \arrow[l,"\cong"]
\end{tikzcd}
\end{equation}
Note that $G\backslash\hat U$ and $U$ are connected since $\hat U$ is connected.

For a chart $a=(U,\hat U,G,\pi)$ of $O$,
we call $U$ the \emph{domain, }$\hat U$ the \emph{codomain}, $G$ the group,
and $\pi$ the \emph{projection of $a$}.

Since $G$ is finite, $\hat U$ carries a $G$-invariant Riemannian metric.
Hence, given $x\in\hat U$, an element $g\in G$ is determined by its value $gx$ and derivative $dg|_x$.

Two $m$-dimensional charts $(U,\hat U,G,\pi)$ and $(U',\hat U',G',\pi')$ of $O$
are said to be \emph{compatible} if, for any $x\in\hat U$ and $x'\in\hat U'$ with $\pi(x)=\pi'(x')$,
there is a local diffeomorphism $f$ from $\hat U$ to $\hat U'$
(a diffeomorphism between open subsets of $\hat U$ and $\hat U'$),
called a \emph{change of charts}, with $f(x)=x'$ and $\pi' f=\pi$.
If the domain of $f$ is (chosen to be) connected, then $f$ is unique up to composition with a $g\in G'$
\cite[p.\,588: Exercise 1.5.1]{BH}.

Assume from now on that $O$ is endowed with an \emph{($m$-dimensional orbifold) atlas of $O$},
that is, a collection $\A$ of $m$-dimensional charts of $O$
such that the domains of the charts from $\A$ cover $O$
and all the charts of $\A$ are compatible with each other.
Since any two charts, which are compatible with each chart from $\A$,
are also compatible with each other, $\A$ determines a unique maximal atlas $\mathcal S$,
a so-called \emph{orbifold structure},
namely the atlas consisting of all charts compatible with all charts from $\A$.
The pair $(O,\mathcal S)$ is called an \emph{($m$-dimensional) orbifold with boundary}
(possibly empty).
Since $\A$ determines $\mathcal S$ uniquely,
we usually view $O$ together with $\mathcal A$ also as an orbifold.

\emph{Convention}:
We assume throughout that orbifolds with boundary are second countable,
although paracompactness would be sufficient in most places.
Moreover, we follow the corresponding tradition for manifolds
and speak of orbifolds when assuming that their boundaries are empty.

Let $O$ be an orbifold with boundary.
Then for any $y\in O$, chart $a$ of $O$ as above with $y\in U$, and point $x\in\hat U$ with $\pi(x)=y$,
the order $|G_x|$ of the stabilizer $G_x$ of $x$ in $G$ does not depend on $x$ and $a$
and is called the \emph{order of $y$}, denoted by $|y|$.
A point $z$ in $O$ is called \emph{regular} if it has order one,
otherwise it is called \emph{singular}.
The \emph{regular set}, that is, the set $\mathcal R=\mathcal R_O$ of regular points,
 is open and dense in $O$.
The complement $\mathcal S=\mathcal S_O$ is the \emph{singular set}.

The orbifold structure of a manifold $M$ with boundary,
given by the connected components of $M$,
where the corresponding groups $G$ are trivial and the maps $\pi$ the identity,
will be called the \emph{trivial (orbifold) structure (of $M$)}.
In the context of orbifolds, manifolds are always endowed with the trivial structure,
unless specified otherwise.

A map $f\colon O'\to O$ between orbifolds with boundary is called \emph{smooth} if,
for all charts $(U',\hat U',G',\pi')$ of $O'$, $(U,\hat U,G,\pi)$ of $O$,
and points $x'\in\hat U'$ and $x\in\hat U$ with $f(\pi'(x'))=\pi(x)$,
there is a smooth map $\vf$ from a neighborhood of $x'$ in $\hat U'$ to $\hat U$
such that $\pi\vf=f\pi'$.
A map $f\colon M\to O$ between a manifold $M$ with boundary and an orbifold $O$ with boundary is smooth
if it is smooth as a  map between orbifolds with boundary, where $M$ is endowed with its trivial orbifold structure.
It is clear by the definition of smoothness that the rank of a smooth map $f$ is well defined.
In particular, we can speak of regular points and regular values of $f$
and can apply the implicit function and Sard theorems.

Using charts in an analogous way, we define tensor fields on orbifolds with boundary.
For example, a \emph{Riemannian metric} on an orbifold $O$ with boundary
consists of a family of Riemannian metrics on the codomains of the charts from an atlas of $O$
such that the actions of the corresponding groups $G$ on the codomains
and the changes of charts are isometric.
An orbifold with boundary together with a Riemannian metric
is called a \emph{Riemannian orbifold with boundary}.

\subsection{Domains in orbifolds}
\label{susdom}
A connected subset $D$ of an orbifold $O$ is called a \emph{domain with smooth boundary}
if the topological boundary $\partial D$ of $D$ admits a covering
by \emph{adapted charts} $a=(U,\hat U,G,\pi)$ of $O$, that is,
$\hat U=(-\ve,\ve)\times\hat V$ with $G$ acting trivially on $(-\ve,\ve)$,
\begin{align}\label{adacha}
	\pi^{-1}(D\cap U) = (-\ve,0]\times\hat V,
	\hspace{3mm}\text{and}\hspace{3mm}
	\pi^{-1}(\partial D\cap U) = \{0\}\times\hat V.
\end{align}
The vector field $\partial/\partial r$, where $r$ denotes the variable in $(-\ve,\ve)$,
is invariant under $G$ and vanishes nowhere in $\hat U$.
This restricts the nature of points which can be boundary points of domains with smooth boundary.
Clearly, any domain in $O$ with smooth boundary is the sublevel set of a regular value
of a smooth function on $O$.
Note also that $\pi_{W}^{-1}(\partial D\cap W)$ is a submanifold of $\hat W$,
for any chart $b=(W,\hat W,H,\pi_{W})$ of $O$.

The adapted charts turn $\partial D$ into an orbifold,
and regular and singular set of $\partial D$ are equal to
$\mathcal R_{\partial D}=\partial D\cap\mathcal R_O$
and $\mathcal S_{\partial D}=\partial D\cap\mathcal S_O$, respectively.

\section{Riemannian orbifolds}
\label{secror}
As defined above, an orbifold $O$ with boundary together with a Riemannian metric on $O$
is called a Riemannian orbifold with boundary.

\subsection{Distance and completeness}
\label{sudico}
Given a Riemannian orbifold $O$ with boundary,
the length of a piecewise smooth curve is defined to be the sum of the lengths
of the local lifts of the curve to codomains of charts.
The distance $d(x,y)$ of two points $x,y\in O$ is defined
to be the infimum of the lengths of piecewise smooth curves in $O$ joining $x$ to $y$.
Since $O$ is a Hausdorff space,
the distance function $d$ is a metric on $O$ in the standard sense if $O$ is connected.
It is then easy to see that $O$ is an \emph{interior metric space},
that is, $d(x,y)$ is the infimum of the lengths of rectifiable curves in $O$ joining $x$ to $y$.

In what follows, let $O$ be a Riemannian orbifold with boundary.

\begin{lem}\label{hatlift}
Let $(U,\hat U,G,\pi)$ be a chart for $O$ and $c\colon[a,b]\to U$ be a rectifiable curve.
Then there is a lift $\sigma\colon[a,b]\to\hat U$ of $c$ such that $L(\sigma|_{[s,t]})=L(c|_{[s,t]})$,
for all $a\le s,t\le b$.
\end{lem}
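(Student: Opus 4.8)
The plan is to construct the lift $\sigma$ by a standard local-lifting-and-patching argument, using the fact that $\pi\colon\hat U\to U$ is a quotient by a finite group and hence a branched covering that is an honest covering over the regular locus. First I would treat the case where $c$ avoids the singular set: over $U\cap\mathcal R_O$ the map $\pi$ restricts to a genuine $|G|$-sheeted covering $\pi^{-1}(U\cap\mathcal R_O)\to U\cap\mathcal R_O$, so a curve with image in the regular set lifts uniquely once an initial point is chosen, and such a lift is an isometry onto its image by definition of the orbifold metric; in particular it preserves the length of every subarc, which gives the conclusion $L(\sigma|_{[s,t]})=L(c|_{[s,t]})$. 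The general curve $c$ may pass through $\mathcal S_O$, but since $\mathcal S_O$ has positive codimension in $O$ and is in particular closed with empty interior, the preimage $c^{-1}(\mathcal S_O)$ is a closed subset of $[a,b]$; on each of the (countably many) open complementary intervals one lifts as above, and the issue is to glue these local lifts across the parameter values where $c$ meets the singular set.

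The gluing step is where I expect the main work. At a parameter $t_0$ with $c(t_0)\in\mathcal S_O$, I would argue as follows: pick $x\in\hat U$ with $\pi(x)=c(t_0)$ and a $G$-invariant Riemannian metric on $\hat U$ (which exists since $G$ is finite and acts by diffeomorphisms). Choose a ball $\hat B$ around $x$ in $\hat U$ small enough that $G_x\hat B=\hat B$ and that $\hat B$ meets no other $G$-orbit translate of itself; then $\pi$ maps $\hat B$ onto a neighborhood $B$ of $c(t_0)$ in $U$ with $G_x\backslash\hat B\cong B$, and $\pi|_{\hat B\cap\pi^{-1}(\mathcal R_O)}$ is a covering of $B\cap\mathcal R_O$. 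For $t$ slightly less than $t_0$, the already-constructed lift $\sigma(t)$ lies in some sheet; because $\hat B\to B$ is a closed surjective proper map (a quotient by a finite group is proper) and because the local lift of the subarc $c|_{[t,t_0]}$ into $\hat B$ is length-preserving, the lift extends continuously to $t_0$ by letting $\sigma(t_0)$ be the limit point, which exists and lies over $c(t_0)$ by properness and by the fact that length-preserving means the lifted arc has finite length hence bounded, so it cannot escape $\hat B$. Continuing the lift past $t_0$ then amounts to choosing a sheet over $B\cap\mathcal R_O$ adjacent to $\sigma(t_0)$ through which $c$ re-enters the regular set; any such choice works for the length identity, and one checks the resulting $\sigma$ is continuous at $t_0$ because $\sigma(t_0)$ is the common limit from both sides (here one may have to compose one side with an element of $G_x$, but that is exactly the freedom in choosing the change of charts, and $G_x$ acts isometrically so lengths are unaffected).

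Finally I would assemble the local lifts: since $[a,b]$ is compact, cover it by finitely many parameter intervals on each of which a length-preserving lift into a single adapted codomain-ball has been produced as above, and patch consecutively, at each overlap adjusting by an element of the relevant finite group (which is an isometry of $\hat U$, so does not change any length) so that the pieces agree. Continuity of the resulting $\sigma$ on all of $[a,b]$ follows from continuity on each piece and agreement on overlaps. The length identity $L(\sigma|_{[s,t]})=L(c|_{[s,t]})$ holds on each piece and is additive over subdivisions, hence holds for all $a\le s,t\le b$. The one delicate point to be careful about — and the part I would write out in full — is the continuity of $\sigma$ precisely at the points where $c$ meets $\mathcal S_O$; everything else is routine covering-space theory over the regular locus combined with the properness of finite-group quotients.
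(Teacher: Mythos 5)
Your proposal takes a genuinely different route from the paper. The paper avoids lifting over the regular locus altogether: it approximates $c$ by piecewise-geodesic curves in $\hat U$ (choosing, for each $\ve>0$, a fine partition, preimage points, and minimizing geodesics between them, rearranged by elements of $G$ so that consecutive pieces match), observes that these approximants are uniformly Lipschitz, and applies Arzel\`a--Ascoli to extract a lift with the claimed length property. That argument is completely insensitive to how $c$ interacts with the singular set.

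Your argument, by contrast, has a genuine gap precisely where you locate the ``delicate point.'' You reduce to lifting over $\mathcal R_O$, where $\pi$ is an honest covering, and then patch across $c^{-1}(\mathcal S_O)$. But a rectifiable curve in an orbifold can spend a nondegenerate amount of time---indeed all of its time---in the singular set: for instance, if $O$ has a reflector wall, a curve can run along it, so $c^{-1}(\mathcal S_O)=[a,b]$ and there are no complementary intervals at all. More pointedly, the paper's own \cref{borz} says that a minimal geodesic meeting $\mathcal S$ at an interior parameter is \emph{entirely contained} in $\mathcal S$; so the behaviour your scheme treats as exceptional (a curve meeting the singular set) is in fact the generic behaviour for geodesics near singular strata, and the ``lift over the regular set, then glue'' framework never gets started. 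Even when $c$ does meet the regular set, $c^{-1}(\mathcal S_O)$ is merely a closed subset of $[a,b]$; it can be a Cantor set or have accumulation of components, and the patching-by-elements-of-$G_x$ you describe is carried out one boundary point at a time, with no control ensuring that the cumulative adjustments produce a single continuous $\sigma$ on all of $[a,b]$. (Your appeal to properness shows a one-sided limit exists at an isolated singular parameter, which is the easy case; it does not address accumulation of singular parameters or a positive-length singular stretch.) To rescue your approach you would need either to show that a rectifiable curve can be reparametrised or perturbed so that $c^{-1}(\mathcal S_O)$ is finite---which is false in general---or to lift directly within the singular strata using the stratified structure, which is a substantially different and more involved argument than the one you sketch. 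The paper's approximation-plus-compactness proof sidesteps all of this.
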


\begin{proof}
We can assume without loss of generality that $c$ has unit speed.

For any $\ve>0$, there are a partition $a=t_0<\dots<t_k=b$ of $[a,b]$,
points $p_i\in\hat U$ above $c(t_i)$,
and minimizing geodesics $\sigma_i\colon[t_{i-1},t_i]\to\hat U$ from $p_{i-1}$ to $q_i$
such that $\pi(q_i)=c(t_{i})$ and
\begin{align*}
	\sum L(\sigma_i) = L(c)\pm\ve.
\end{align*}
By rearranging the points $p_i$, we can assume that $q_i=p_{i}$.
Then
\begin{align*}
	\sigma = \sigma_1*\dots*\sigma_k
\end{align*}
is a continuous curve in $\hat U$.
Since the $\sigma_i$ are minimizing, we have
\begin{align*}
	L(\sigma_i)\le L(c|_{[t_{i-1},t_i]})\le t_i-t_{i-1}.
\end{align*}
It is not hard to see that we may apply the Arzela-Ascoli theorem to a sequence of such curves,
where $\ve=\ve_n\to0$, to get a lift $\sigma$ of $c$ as asserted.
\end{proof}

The following result is an immediate consequence of Cohn-Vossen's generalization
of the Hopf-Rinow Theorem \cite{CV} (\cite[Section I.2]{Bal}).

\begin{thm}\label{hori}
If $O$ is connected, then the following are equivalent:
\begin{enumerate}
\item\label{hr1}
$O$ is complete as a metric space;
\item\label{hr2}
any minimizing geodesic $c\colon[0,1)\to O$ can be extended to $[0,1]$;
\item\label{hr3}
for some $x\in O$,
any minimizing geodesic $c\colon[0,1)\to O$ with $c(0)=x$ can be extended to $[0,1]$; 
\item\label{hr4}
bounded subsets of $O$ are relatively compact.
\end{enumerate}
Moreover, each of these properties implies that, for any pair $x,y\in O$,
there is a minimizing geodesic from $x$ to $y$.
\end{thm}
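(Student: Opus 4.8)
The plan is to deduce \cref{hori} from the metric version of the Hopf--Rinow theorem for locally compact length spaces, that is, from Cohn--Vossen's theorem \cite{CV} (see also \cite[Section I.2]{Bal}). To set the stage, one has to check that, for $O$ connected, $(O,d)$ is a locally compact length space whose metric induces the given topology of $O$. That $d$ is a genuine metric and that $(O,d)$ is a length space was already observed above; the two remaining points are local, and I would verify them by passing to a chart. Fix $x\in O$, a chart $a=(U,\hat U,G,\pi)$ with $x\in U$, a point $\hat x\in\pi^{-1}(x)$, and a $G$-invariant Riemannian metric $\hat g$ on $\hat U$, for instance the one induced by the metric of $O$. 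For small $r>0$ the closed ball $\bar B(\hat x,r)\subseteq(\hat U,\hat g)$ is compact and disjoint from its translates $g\bar B(\hat x,r)$ with $g\notin G_x$; hence $\pi$ maps it, modulo the finite group $G_x$, homeomorphically onto a neighbourhood of $x$, so that $\pi(\bar B(\hat x,r))$ is a compact neighbourhood of $x$ in $O$. Lifting rectifiable curves isometrically by \cref{hatlift} then shows that, for small $r$, the restriction of $d$ to this neighbourhood equals the quotient by $G_x$ of the intrinsic metric of $(\bar B(\hat x,r),\hat g)$, so $d$ induces the orbifold topology near $x$. This makes $(O,d)$ a connected, locally compact length space.

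Next I would record the dictionary between the two notions of geodesic. A minimizing geodesic of the Riemannian orbifold $O$ is a constant-speed curve realizing the distance between any two of its points, which is exactly a constant-speed shortest path of the metric space $(O,d)$; by \cref{hatlift} the restriction of such a curve to a subinterval mapping into a chart domain lifts to a shortest path of $(\hat U,\hat g)$, that is, to a geodesic of the Riemannian manifold $\hat U$ with boundary, and conversely $\pi$-images of such lifts are locally minimizing. Hence the four conditions in \cref{hori} are precisely the corresponding conditions for the length space $(O,d)$.

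With this in place, Cohn--Vossen's theorem yields everything. For a connected, locally compact length space, metric completeness is equivalent to the property that bounded closed subsets are compact, which gives the equivalence of the first and the last condition. Completeness implies that every constant-speed shortest path defined on a bounded half-open interval extends to the closed interval, which gives the implication from the first to the second condition; this can also be seen directly, since such a path has finite length, is therefore Cauchy at the missing endpoint, and the limit point together with the continuity of $d$ provides the extension. Completeness also implies that any two points are joined by a shortest path, which is the concluding assertion of \cref{hori}. The passage from the second to the third condition is trivial, and the converse, that extendability of shortest paths issuing from a single point forces all closed balls about that point, hence $O$ itself, to be complete, is the substance of Cohn--Vossen's theorem. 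Together these establish all the equivalences and the final clause.

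The only genuine work, and hence the main obstacle, lies in the first two paragraphs: verifying that $d$ metrizes $O$, that $(O,d)$ is locally compact, and that the minimizing geodesics of $O$ are exactly the shortest paths of $(O,d)$. Each of these reduces, through the isometric lifting of rectifiable curves provided by \cref{hatlift}, to standard facts about finite quotients of Riemannian manifolds with boundary, which is why \cref{hori} is, as asserted, an immediate consequence of the metric Hopf--Rinow theorem. A secondary point to watch is the presence of boundary: shortest paths may run along $\partial O$, but since the statement is phrased in terms of length-minimizing curves this causes no trouble.
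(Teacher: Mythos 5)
Your proposal is correct and follows the same route the paper takes: the paper itself offers no proof beyond the one-line remark that \cref{hori} "is an immediate consequence of Cohn--Vossen's generalization of the Hopf--Rinow Theorem," and you have simply spelled out the verification that $(O,d)$ is a connected, locally compact length space whose shortest paths are exactly the minimizing geodesics, so that Cohn--Vossen's theorem applies. The details you supply (local compactness via charts, the lifting of rectifiable curves from \cref{hatlift}, and the remark on the boundary) are all consistent with what the paper leaves implicit.
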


The proof of the following result is close to the proof
of the corresponding result for manifolds. 

\begin{prop}\label{comex}
Any orbifold with boundary admits a complete Riemannian metric.
\end{prop}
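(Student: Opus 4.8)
The plan is to adapt the standard manifold argument: exhaust the orbifold by relatively compact open sets, fix an auxiliary Riemannian metric, and conformally rescale it near infinity so that these exhausting sets recede to infinite distance, while patching the rescalings together with a partition of unity. First I would use second countability (our standing convention) together with \cref{hori} to produce an exhaustion $O=\bigcup_{n\ge1}K_n$ of each connected component by compact sets with $K_n\subseteq\inte K_{n+1}$; on a general orbifold with boundary one can still do this because the underlying space is a locally compact, $\sigma$-compact Hausdorff space. Choose any Riemannian metric $g_0$ on $O$ (which exists by the usual partition-of-unity construction on orbifolds, or may be taken for granted). The goal is to build a smooth positive function $u$ on $O$ with $g=u^2g_0$ complete, equivalently, with $\int_\gamma u\,ds=\infty$ along every divergent curve.

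Next I would pick a smooth exhaustion function: a proper smooth function $\varphi\colon O\to[0,\infty)$ with $\varphi\le n$ on $K_n$. Such a $\varphi$ is assembled from an orbifold partition of unity subordinate to a locally finite cover by chart domains, averaging coordinate bump functions over the finite groups $G$ to make them orbifold-smooth; properness follows from the exhaustion. Then set $u=f\circ\varphi$ for a smooth increasing function $f\colon[0,\infty)\to[1,\infty)$ to be chosen, so that $u$ is automatically orbifold-smooth and positive. Along any curve $c\colon[0,\infty)\to O$ leaving every compact set, $\varphi(c(t))\to\infty$, and the $g$-length of $c$ is $\int u|\dot c|_{g_0}\,dt$. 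The point is that this can be made infinite by choosing $f$ to grow fast enough relative to how slowly $\varphi$ can increase along a unit-speed $g_0$-curve; concretely, one bounds $|\nabla\varphi|_{g_0}$ on the compact shell $\{\,n\le\varphi\le n+1\,\}$ by some constant $c_n$, and then a divergent curve must spend $g_0$-length at least $1/c_n$ crossing that shell, so choosing $f\ge nc_n$ on $[n,n+1]$ forces the $g$-length contribution of each shell to be at least $n$, hence the total length diverges. This is exactly the classical argument (see e.g. Nomizu--Ozeki for manifolds), and nothing in it is obstructed by the orbifold singularities, since all constructions are carried out $G$-equivariantly in charts.

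Finally I would invoke \cref{hori}: since bounded subsets of the resulting $(O,g)$ are contained in finitely many $g$-balls, and divergent curves have infinite $g$-length, every $g$-bounded set is relatively compact, so $g$ is complete on each connected component; as there are at most countably many components and the metric is defined componentwise, $g$ is a complete Riemannian metric on all of $O$. The only genuine point needing care—the main (mild) obstacle—is verifying that the auxiliary objects ($g_0$, the partition of unity, the exhaustion function $\varphi$, and $u=f\circ\varphi$) are smooth \emph{in the orbifold sense}, i.e. that they lift to $G$-invariant smooth objects compatible with all changes of charts; this is handled by the usual device of averaging over the finite groups $G$ and using that compatibility of charts is preserved under such averaging. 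With that in place the estimate on shell-crossing lengths is a routine computation.
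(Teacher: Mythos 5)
Your proposal is correct and follows exactly the Nomizu--Ozeki conformal-rescaling argument, which is the ``corresponding result for manifolds'' that the paper refers to; the adaptation to orbifolds (averaging over the finite groups $G$ to get orbifold-smooth auxiliary objects, and observing that $|\nabla\varphi|_{g_0}$ descends since it is $G$-invariant) is handled appropriately, and the appeal to condition \ref{hr4} of \cref{hori} is the right way to conclude. This is essentially the same approach the paper has in mind.
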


\subsection{Riemannian measure}
\label{susior}
Let $O$ be a Riemannian orbifold of dimension $m$.
Then the volume element $\dv$ of  the Riemannian metric of $O$
is well defined on the manifold $\mathcal R$ of regular points of $O$,
and we define the \emph{($m$-dimensional) measure} of a Borel set $B$ of $O$ by
\begin{align}\label{mb}
	|B| =|B|_m = \int_{\mathcal R} \dv(z) \chi_B(z),
\end{align}
where $\chi_B$ denotes the characteristic function of $B$.
To justify this definition,
we observe that $O$ can be covered by the domains of countably many charts of $O$
and that, in the codomain of each chart of $O$, the set of singular points has measure zero.
With \eqref{mb},
we obtain a positive measure $\dv$ on the $\sigma$-algebra of Borel sets of $O$.
Furthermore, if $B\subseteq O$ is a Borel set which is contained in the domain $U$
of a chart $(U,\hat U,G,\pi)$, then
\begin{align}\label{mba}
	|G| |B| = \int_{\hat U} \dv(z) \chi_B(\pi(z)) = |\pi^{-1}(B)|,
\end{align}
where the right hand side denotes the Riemannian volume of $\pi^{-1}(B)$.

Suppose that $D$ is a compact domain with smooth boundary $\partial D$ in $O$
as in \cref{susdom}.
Then adapted charts turn $\partial D$ into an orbifold,
endowed with the induced Riemannian metric.
Hence by \eqref{mb}, applied to $\partial D$,
we obtain an ($(m-1)$-dimensional) measure on the $\sigma$-algebra of Borel sets in $\partial D$,
\begin{align}\label{mdb}
	|B| =|B|_{m-1} = \int_{\partial D\cap\mathcal R} \dv_{m-1}(z) \chi_B(z),
\end{align}
where $\dv_{m-1}$ denotes the induced volume element of the submanifold $\partial D\cap\mathcal R$
in the Riemannian manifold $\mathcal R$.
Here we recall that $\partial D\cap\mathcal R=\mathcal R_{\partial D}$.

Let now $b=(V,\hat V,H,\pi_V)$ be an adapted chart for $\partial D$,
$a=(U,\hat U,G,\pi_U)$ an arbitrary chart of $O$,
$z\in U\cap V$ a singular point, $x\in\pi_U^{-1}(z)$, and $y\in\pi_V^{-1}(z)$.
Then there is a local diffeomorphism $\vf$ respecting the projections
from a neighborhood of $x$ in $\hat U$ to a neighborhood of $y$ in $\hat V$.
In particular, $\vf(x)=y$.
Moreover, $\vf$ respects the order of points
so that $\vf$ maps regular and singular points in $\hat U$ to respective points in $\hat V$.
In particular, since the set of singular points in $\pi_V^{-1}(\partial D)$
has $(m-1)$-dimensional measure zero in $\hat V$,
the same is true for the set of singular points in the submanifold $\pi_U^{-1}(\partial D)$ in $\hat U$.
Therefore, if $B\subseteq\partial D$ is a Borel set which is contained in $U$, then
\begin{align}\label{mdba}
	|G| |B|_{m-1}
	= \int_{\pi_U^{-1}(\partial D)} \dv_{m-1}(z) \chi_B(\pi(z))
	= |\pi_U^{-1}(B)|_{m-1},
\end{align}
where the right hand side denotes the Riemannian volume of $\pi_U^{-1}(B)$
in the manifold $\pi_U^{-1}(\partial D)$ with the induced Riemannian metric.

\begin{lem}[Coarea formula]\label{caf}
For $f\in C^\infty(O)$ and $\vf\in C^0_c(O)$
\begin{align*}
	\int_O |\nabla f|\dv_m \vf = \int_\R \int_{\{f=t\}} \dt \dv_{m-1} \vf.
\end{align*}
\end{lem}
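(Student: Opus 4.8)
The plan is to reduce the orbifold coarea formula to the classical coarea formula on Riemannian manifolds by working locally in charts and patching together with a partition of unity. Since both sides of the asserted identity are (by the monotone convergence theorem) determined by their values on non-negative $\vf$, and since $\vf$ has compact support, I would first choose a finite atlas $\{(U_i,\hat U_i,G_i,\pi_i)\}$ of charts whose domains $U_i$ cover $\supp\vf$, together with a smooth partition of unity $\{\vf_i\}$ subordinate to $\{U_i\}$ with $\sum\vf_i=1$ on $\supp\vf$. By linearity in $\vf$ it then suffices to prove the formula for each $\vf_i\vf$, i.e. we may assume $\supp\vf$ is contained in a single chart domain $U=U_i$.

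With $\vf$ supported in $U$, I would pull everything back to the codomain $\hat U$ via $\pi=\pi_i$. Set $\hat f=f\circ\pi\in C^\infty(\hat U)$ and $\hat\vf=\vf\circ\pi\in C^0_c(\hat U)$; note $\hat\vf$ is $G$-invariant. By \eqref{mba}, $\int_O|\nabla f|\,\dv_m\,\vf = \frac{1}{|G|}\int_{\hat U}|\nabla\hat f|\,\dv_m\,\hat\vf$, using that the regular set has full measure and that $\pi$ is a local Riemannian isometry on the regular locus, so $|\nabla\hat f| = |\nabla f|\circ\pi$ there. The classical coarea formula on the manifold $\hat U$ gives
\begin{align*}
	\int_{\hat U} |\nabla\hat f|\,\dv_m\,\hat\vf = \int_\R \int_{\{\hat f=t\}} \dt\,\dv_{m-1}\,\hat\vf.
\end{align*}
It remains to identify the inner integral over the level set $\{\hat f=t\}\subseteq\hat U$ with $|G|$ times the orbifold integral $\int_{\{f=t\}}\dv_{m-1}\,\vf$ over $O$. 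For this I would invoke \cref{susdom}: for almost every $t$ (a full-measure set of regular values of $f$, by Sard applied to the smooth orbifold map $f$), the level set $\{f=t\}$ is a domain-type subset of $O$ whose preimage $\{\hat f=t\}=\pi^{-1}(\{f=t\})$ is a $G$-invariant submanifold of $\hat U$, and $\{f=t\}$ inherits an orbifold structure via charts adapted to it. Then \eqref{mdba}, applied chart-by-chart on $\partial D$ with $D=\{f\le t\}$, yields exactly $\int_{\{\hat f=t\}}\dv_{m-1}\,\hat\vf = |G|\int_{\{f=t\}}\dv_{m-1}\,\vf$. Combining the three displayed equalities and the factors of $|G|$ gives the claim for $\vf$ supported in one chart, and summing over the partition of unity finishes the proof.

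The main obstacle I anticipate is the measure-theoretic bookkeeping at singular points of the level sets: one must make sure that for almost every $t$ the level set $\{f=t\}$ really is a domain with smooth boundary in the sense of \cref{susdom} (so that \eqref{mdba} applies), and that the exceptional set of "bad" $t$ — where either $t$ is a critical value of $f$ on $O$ or the level set fails to meet the singular set transversally in a null set — has Lebesgue measure zero. The first is handled by Sard's theorem for smooth orbifold maps (available by the remarks in \cref{secor}); the second follows from the observation, already recorded in \cref{susior}, that within each chart the singular set of a submanifold of the form $\pi^{-1}(\partial D)$ has $(m-1)$-dimensional measure zero, together with a further application of the manifold coarea formula on $\hat U$ to see that for a.e.\ $t$ the intersection of $\{\hat f=t\}$ with the (lower-dimensional) singular locus of $\hat U$ is $\dv_{m-1}$-null in $\{\hat f=t\}$. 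Granting these routine null-set arguments, the identity is purely local and reduces cleanly to the classical statement.
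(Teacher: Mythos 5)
Your proof is correct, but it takes a noticeably different route from the paper's. The paper's proof is a terse flow-box argument carried out directly on $O$: away from critical points of $f$ one introduces local coordinates $(s,y)$ via the flow of $X=\nabla f/|\nabla f|^2$, in which the pointwise identity of measures $|\nabla f|\dv_m=\dt\,\dv_{m-1}$ holds, and then integrates. Because $f$ is constant on local orbits, $X$ is invariant under the chart groups and the flow commutes with them, so these flow-box coordinates \emph{are} adapted orbifold charts with $G$ acting trivially on the $s$-factor; the $|G|$-bookkeeping never appears explicitly because the comparison of $\dv_m$ and $\dt\,\dv_{m-1}$ is done at the level of the orbifold measures themselves. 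You instead pull everything back to the codomain $\hat U$ of an arbitrary chart, apply the classical Riemannian coarea formula there as a black box, and translate back to $O$ via the $|G|$-factor relations \eqref{mba} and \eqref{mdba}. The one point you flag as a possible obstacle --- that for a.e.\ $t$ the level set $\{f=t\}$ must admit adapted charts so that \eqref{mdba} applies --- is indeed the crux, and it is resolved by exactly the observation above: for a regular value $t$, the flow of $\nabla\hat f/|\nabla\hat f|^2$ on $\hat U$ is $G$-equivariant and yields product charts of the required form. Both approaches are sound; yours front-loads the chart-level measure bookkeeping and leans on the classical manifold theorem, while the paper's rederives the local identity directly and is shorter at the cost of leaving that bookkeeping implicit.
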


\begin{proof}
Only the regular points of $f$ matter.
In the neighborhood of such a point $x$,
we may introduce coordinates by using the local flow $(F_s)$ of $X=\nabla f/|\nabla f|^2$,
that is, $(s,y)\sim F_s(y)$ with $y\in\{f(y)=t\}$, where $f(x)=t$.
In such coordinates, we have $|\nabla f|\dv_m=\dt \dv_{m-1}$.
\end{proof}

\subsection{Cut locus and two comparison results}
\label{suscut}

Let $O$ be a connected Riemannian orbifold (that is, orbifold with empty boundary).
It is then easy to see that a curve $c\colon I\to O$, that is minimizing locally,
has local lifts to codomains of charts over it and that, up to parametrization,
these local lifts are geodesics in the usual sense.

\begin{prop}[Proposition 15 in \cite{Bor}]\label{borz}
If $c\colon[a,b]\to O$ is a minimal geodesic and $c(t)\in\mathcal S$ for some $t\in(a,b)$,
then $c$ is contained in $\mathcal S$.
\end{prop}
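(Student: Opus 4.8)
The plan is to reduce everything to showing that
\[
A=\{t\in(a,b):c(t)\in\mathcal S\}
\]
is open in $(a,b)$. Indeed, $A$ is automatically closed in $(a,b)$, since $\mathcal S$ is the complement of the open set $\mathcal R$, and $A\ne\emptyset$ by hypothesis; as $(a,b)$ is connected, openness then forces $A=(a,b)$, and by continuity of $c$ together with the closedness of $\mathcal S$ we also get $c(a),c(b)\in\mathcal S$, so that $c([a,b])\subseteq\mathcal S$.

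To prove that $A$ is open, fix $t_0\in A$ and a chart $(U,\hat U,G,\pi)$ of $O$ with $c(t_0)\in U$. For $\ve>0$ small enough that $c((t_0-\ve,t_0+\ve))\subseteq U$, the remark preceding the proposition yields a lift $\hat c\colon(t_0-\ve,t_0+\ve)\to\hat U$ of $c$ which, $c$ being a minimal geodesic parametrized proportionally to arc length, is itself a geodesic in $\hat U$. Set $x=\hat c(t_0)$; since $c(t_0)\in\mathcal S$, the stabilizer $G_x$ is non-trivial. I will show that $\hat c$ is fixed pointwise by $G_x$. Granting this, $G_x$ is contained in the stabilizer of $\hat c(s)$ for every $s\in(t_0-\ve,t_0+\ve)$, so $\hat c(s)$, and hence $c(s)$, is singular on the whole interval; thus $(t_0-\ve,t_0+\ve)\cap(a,b)\subseteq A$, and $A$ is open.

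So fix $g\in G_x$ and define $\gamma$ on $[t_0-\ve/2,t_0+\ve/2]$ by $\gamma=\hat c$ on $[t_0-\ve/2,t_0]$ and $\gamma=g\circ\hat c$ on $[t_0,t_0+\ve/2]$. Since $gx=x$, the curve $\gamma$ is continuous, and since $\pi\circ g=\pi$ it is again a lift of $c$; as $g$ acts isometrically, $\gamma$ has the same length as $\hat c|_{[t_0-\ve/2,t_0+\ve/2]}$, which in turn, $c$ being minimal, equals $d_O(c(t_0-\ve/2),c(t_0+\ve/2))$. Since projecting a curve from $\hat U$ to $U\subseteq O$ preserves its length, $\pi$ does not increase distances, so
\begin{align*}
	L(\gamma)=d_O\bigl(c(t_0-\ve/2),c(t_0+\ve/2)\bigr)
	\le d_{\hat U}\bigl(\gamma(t_0-\ve/2),\gamma(t_0+\ve/2)\bigr)\le L(\gamma).
\end{align*}
Hence $\gamma$ is a minimizing geodesic in $\hat U$, in particular smooth; being $C^1$ at $t_0$ it satisfies $dg|_x(\hat c'(t_0))=\hat c'(t_0)$, so $g\circ\hat c$ and $\hat c$ are geodesics with the same value and velocity at $t_0$ and therefore coincide. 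As $g\in G_x$ was arbitrary, this gives the claim.

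The main obstacle is the middle step — upgrading the broken lift $\gamma$ to a genuine minimizing geodesic in $\hat U$ by pitting the length-preserving projection $\pi$ against the global minimality of $c$ — together with the preliminary point, glossed over above, that $c$ admits a lift to $\hat U$ on a full two-sided neighborhood of $t_0$ that is an honest geodesic rather than merely one up to reparametrization, which is why it matters that $c$ is parametrized proportionally to arc length. Everything else is a routine connectedness argument and bookkeeping with stabilizers.
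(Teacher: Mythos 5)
Your argument is correct. The paper itself does not prove this statement but simply cites it as Proposition~15 of Borzellino, so there is no in-text proof to compare against; your write-up therefore fills a genuine gap rather than duplicating the authors' reasoning. The central idea — for $g\in G_x$, concatenate $\hat c$ with $g\circ\hat c$ at $t_0$, note that the broken curve $\gamma$ is again a lift of $c$ of the same length, and then trap its length between $d_O$ and $d_{\hat U}$ using global minimality of $c$ and the distance-nonincreasing property of $\pi$ to force $\gamma$ to be a smooth minimizing geodesic in $\hat U$, so that $dg|_x$ fixes $\hat c'(t_0)$ and hence $g$ fixes $\hat c$ — is exactly the standard mechanism for this kind of result, and the surrounding connectedness argument is clean. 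One remark on the point you flagged yourself: you cite the paper's remark that local lifts of minimizing geodesics are honest geodesics, and observe this is slightly delicate at singular points; but your own sandwich argument applied with $g$ equal to the identity already establishes that $\hat c$ is a minimizing geodesic on $[t_0-\ve/2,t_0+\ve/2]$, so there is no circularity — you could in fact drop the appeal to the remark and make the proof entirely self-contained.
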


Assume for the rest of this subsection that $O$ is complete.
Let $x\in O$ and $c$ be a (non-constant) geodesic starting at $x$.
Then the first point on $c$ behind which $c$ is not a minimal connection to $x$ anymore
is called the \emph{cut point} of $x$ along $c$.
The set $C(x)$ of all cut points of $x$ along geodesics from $x$
is called the \emph{cut locus of $x$}.

\begin{prop}\label{cutloc}
We have:
\begin{enumerate}
\item\label{cl0}
for any direction $v$ at $x$, the time $t_0(v)>0$,
at which the unit speed geodesic from $x$ in the direction of $v$ stops being minimizing,
depends continuously on $v$;
\item \label{cl1}
for any $y\in O\setminus C(x)$,
there is a unique minimal geodesic from $x$ to $y$;
\item\label{cl2}
$C(x)$ is closed and $|C(x)|_m=0$, where $m=\dim O$;
\item\label{cl3}
if $x\in\mathcal R$, then $\mathcal S\subseteq C(x)$.
\end{enumerate}
\end{prop}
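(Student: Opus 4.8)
The plan is to follow the classical manifold argument, lifting to codomains of charts whenever a local statement is needed and using \cref{borz} to control how geodesics interact with the singular set $\mathcal S$. For \ref{cl0} and \ref{cl1}, I would work entirely in a chart $(U,\hat U,G,\pi)$ around the relevant point. Given a unit direction $v$ at $x$, the geodesic $c_v$ from $x$ has a lift to $\hat U$ which is an honest geodesic; the first time $t_0(v)$ at which $c_v$ stops being minimizing is characterized, exactly as in the manifold case, by the dichotomy that either $c_v(t_0(v))$ is conjugate to $x$ along $c_v$, or there are (at least) two distinct minimizing geodesics from $x$ to $c_v(t_0(v))$. Continuity of $t_0$ then follows from the usual limiting argument: if $v_n\to v$ but $t_0(v_n)\not\to t_0(v)$, pass to a subsequence with $t_0(v_n)\to T$; using completeness (\cref{hori}) and compactness of the unit sphere in $T_xO$ — which makes sense since $x$ has a well-defined tangent cone, and when $x\in\mathcal R$ an honest tangent space — extract limiting minimizing geodesics to $c_v(T)$ and derive a contradiction with the minimality properties of $t_0(v)$. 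For \ref{cl1}, if $y\notin C(x)$ then $y$ lies strictly before the cut point along any minimizing geodesic from $x$ to $y$, so two such geodesics would violate the second alternative above; hence the minimizing geodesic is unique.

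For \ref{cl2}, I would again transfer the standard argument. Closedness of $C(x)$ is immediate from the continuity in \ref{cl0}: $C(x)=\{\exp_x(t_0(v)v): v \text{ unit}, t_0(v)<\infty\}$, and this is the image of a closed set under a continuous map into a locally compact space, so closed (using completeness to rule out escape to infinity). For the measure statement, I would use that $O\setminus(C(x)\cup\{x\})$ is diffeomorphic, via the exponential map, to a star-shaped open subset of $T_xO$ minus the origin — at least on the regular part; the point $x$ contributes measure zero, and the cut locus is the boundary image, hence $|C(x)|_m=0$ by the area/coarea formula (\cref{caf}) applied to the radial distance function, or more simply because $C(x)$ is the graph over the unit sphere of the a.e.-defined, measurable function $t_0$, which has $m$-dimensional measure zero. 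Some care is needed if $x\in\mathcal S$, in which case one works in $\hat U$: the preimage $\pi^{-1}(C(x))$ relates to cut loci of the lifts of $x$ in $\hat U$, each of measure zero in $\hat U$, and \eqref{mba} transfers this to $O$.

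The substantive new point is \ref{cl3}: if $x\in\mathcal R$ then $\mathcal S\subseteq C(x)$. Here I would argue by contraposition: suppose $z\in\mathcal S$ with $z\notin C(x)$. Then by \ref{cl1} there is a unique minimizing geodesic $c\colon[0,d(x,z)]\to O$ from $x$ to $z$, and since $z\notin C(x)$ it is minimizing on a slightly larger interval $[0,d(x,z)+\delta]$, so $c$ is a \emph{minimal geodesic} through the singular point $z=c(d(x,z))$ with $d(x,z)\in(0, d(x,z)+\delta)$. By \cref{borz}, $c$ is then entirely contained in $\mathcal S$; in particular $x=c(0)\in\mathcal S$, contradicting $x\in\mathcal R$. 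The main obstacle is ensuring the tangent-cone/exponential-map formalism at singular points is set up carefully enough that the compactness and limiting arguments in \ref{cl0} go through, and that the measure-zero claim in \ref{cl2} is correctly reduced to the manifold case in a chart via \eqref{mba}; the implication \ref{cl3}, by contrast, is a clean consequence of \cref{borz} and \ref{cl1}.
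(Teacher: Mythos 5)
Your proposal is correct and takes essentially the same route as the paper, which disposes of \ref{cl0}--\ref{cl2} with a one-line ``mutatis mutandis'' reference to the manifold case and derives \ref{cl3} directly from \cref{borz}; your argument for \ref{cl3} by contraposition---$z\in\mathcal S\setminus C(x)$ is an interior point of a minimizing geodesic from $x$, so \cref{borz} forces $x\in\mathcal S$---is exactly the intended one-step deduction.
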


\begin{proof}
Mutatis mutandis,
\eqref{cl0}--\eqref{cl2} follow easily from corresponding arguments in the case of Riemannian manifolds.
\cref{borz} implies \eqref{cl3}.
\end{proof}

The Bishop-Gromov volume and Cheng eigenvalue comparison theorems extend to orbifolds.
Denote by $B_r(k)$ the ball of radius $r$ in $M_k^m$,
the model space of dimension $m=\dim O$ and constant sectional curvature $k$.

\begin{thm}[Bishop-Gromov volume comparison]\label{bgv}
Assume that $\Ric\ge(m-1)k$ on $B_s(x)$.
Then
\begin{align*}
	\frac{|B_s(x)|}{\beta(s)}
	\le \frac{|B_r(x)|}{\beta(r)}
	\xrightarrow[r\to 0]{} \frac1{|x|}
\end{align*}
for all $0<r<s$,
where $\beta(r)=|B_r(k)|$ .
Moreover, equality on the left holds for some $0<r<s$
if and only if $B_s(x)$ is isometric to $G\backslash B_s(k)$,
where $G$ is a finite group of isometries of $M_k^m$ fixing the center of $B_s(k)$.
\end{thm}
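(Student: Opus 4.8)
The strategy is to reduce the orbifold Bishop–Gromov inequality to the classical manifold statement by lifting everything to a chart, being careful about the normalization by the order of points. Fix $x \in O$ and choose $s$ small enough that $B_s(x)$ lies in the domain $U$ of a chart $a=(U,\hat U,G,\pi)$; by \cref{caf} and the structure of the regular set this is the only case that needs attention, since the general case will follow by exhausting arbitrary $s$ with a limiting argument once we know the monotone quotient is defined. Pick $\hat x \in \pi^{-1}(x)$. Using \cref{hatlift} (length-preserving lifts of rectifiable curves) together with \cref{borz} and \cref{cutloc}, the preimage $\pi^{-1}(B_r(x))$ is exactly the union $\bigcup_{g\in G} B_r(g\hat x)$ of $r$-balls around the $G$-orbit of $\hat x$ in $\hat U$. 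For small $r$ these balls are pairwise "almost disjoint": they intersect only in the singular set, which has measure zero, and each ball $B_r(g\hat x)$ is isometric to $B_r(\hat x)$. Combining this with \eqref{mba}, namely $|G|\,|B_r(x)| = |\pi^{-1}(B_r(x))|$, we get the key identity
\begin{align*}
	|G|\,|B_r(x)| = |G_{\hat x}| \cdot |B_r(\hat x)|_{\hat U},
\end{align*}
where the factor $|G_{\hat x}| = |x|$ counts the overcounting coming from the stabilizer. Hence $|B_r(x)| = |B_r(\hat x)|_{\hat U}/|x|$, reducing the problem to $\hat U$.

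**Applying the manifold theorem.** On $\hat U$ the metric is a genuine Riemannian metric, and the hypothesis $\Ric \ge (m-1)k$ on $B_s(x)$ pulls back to $\Ric \ge (m-1)k$ on $B_s(\hat x)$ since $\pi$ is a local isometry. The classical Bishop–Gromov theorem then gives that $r \mapsto |B_r(\hat x)|_{\hat U}/\beta(r)$ is non-increasing on $(0,s)$ with limit $1$ as $r \to 0$ (here one uses that $\hat x$ is a regular point of the manifold $\hat U$, so the density of the volume form at $\hat x$ is $1$). Dividing the identity above by $\beta(r)$:
\begin{align*}
	\frac{|B_r(x)|}{\beta(r)} = \frac{1}{|x|}\cdot\frac{|B_r(\hat x)|_{\hat U}}{\beta(r)},
\end{align*}
so the left side is non-increasing and tends to $1/|x|$, which is the assertion. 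For $s$ not contained in a single chart, note that the quotient is locally monotone by the above, hence globally monotone, and $|B_s(x)| = \int_0^s |\partial B_r(x)|_{m-1}\,dr$ by the coarea formula \cref{caf} applied to the distance function (whose regularity outside $C(x)$ is guaranteed by \cref{cutloc}), so the inequality persists in the limit.

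**The equality case.** Suppose $|B_s(x)|/\beta(s) = |B_r(x)|/\beta(r)$ for some $0<r<s$. First reduce to the case $B_s(x) \subseteq U$: monotonicity forces equality for all intermediate radii, and a covering/continuity argument localizes the rigidity. Then the displayed identity forces $|B_s(\hat x)|_{\hat U}/\beta(s) = |B_r(\hat x)|_{\hat U}/\beta(r)$, so by the rigidity part of the classical Bishop–Gromov theorem $B_s(\hat x)$ is isometric to $B_s(k)$, the metric $s$-ball in $M_k^m$, via an isometry sending $\hat x$ to the center. Transporting the $G$-action through this isometry, $G$ becomes a finite group $G'$ of isometries of $B_s(k)$; since $G$ fixes $\hat x$ set-theoretically only up to the orbit, one must check that in fact the relevant subgroup fixes the center — but $g\hat x$ at distance $0$ from $\hat x$ means every $g$ with $B_r(g\hat x)$ meeting $B_r(\hat x)$ essentially must have $g\hat x = \hat x$ once $r$ is small, i.e. for the equality case with all intermediate radii the whole group $G$ fixes $\hat x$. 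Therefore $B_s(x) = G\backslash B_s(\hat x)$ is isometric to $G'\backslash B_s(k)$ with $G'$ fixing the center, as claimed. The converse is immediate by computing volumes in the quotient.

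**Main obstacle.** The delicate point is the bookkeeping around the group action and orders of points: verifying that $\pi^{-1}(B_r(x)) = \bigcup_g B_r(g\hat x)$ with overlaps confined to a measure-zero set, and correctly extracting the factor $|x| = |G_{\hat x}|$ rather than $|G|$. This rests on \cref{borz} (singular points sit inside the cut locus of any regular point, so geodesics behave predictably) and on choosing $r$ small enough that distinct orbit points have disjoint-up-to-singular balls; pushing past a single chart via the monotonicity-plus-coarea argument is the secondary technical step. The appeal to the manifold theorem and its rigidity case is then essentially a black box.
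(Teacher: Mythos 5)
Your argument is a genuinely different approach from the paper's (the paper cites Borzellino and sketches the standard manifold proof — integrating the Riccati/area comparison along radial geodesics, using \cref{cutloc} to make geodesic polar coordinates work on the orbifold — while you try to reduce to the classical manifold theorem by lifting to a single chart). Unfortunately there is a real gap that breaks the reduction.

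The key identity you use, $|B_r(x)| = |B_r(\hat x)|_{\hat U}/|x|$, is \emph{false} for $r$ beyond the separation scale of the $G$-orbit of $\hat x$. Your own derivation requires the balls $B_r(g\hat x)$, taken over coset representatives $g$ of $G/G_{\hat x}$, to overlap only in a set of measure zero. But once $2r$ exceeds $\min\{d(\hat x, g\hat x) : g\hat x \ne \hat x\}$, two such balls intersect in an open set of positive measure, and then
\begin{align*}
  |G|\,|B_r(x)| \;=\; \Bigl|\textstyle\bigcup_g B_r(g\hat x)\Bigr|
  \;<\; \frac{|G|}{|G_{\hat x}|}\,|B_r(\hat x)|,
\end{align*}
so $|B_r(x)| < |B_r(\hat x)|/|x|$ strictly. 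A concrete instance: take $\hat U = \mathbb{R}^2$ with $G = \mathbb{Z}/2$ acting by $z\mapsto -z$ and $\hat x=(\varepsilon,0)$; then $|x|=1$ but for $r>\varepsilon$ the two lifted disks overlap, so the identity fails even though $B_r(x)$ is still well inside the chart. Since this separation scale can be arbitrarily small (it goes to $0$ as $\hat x$ approaches the singular set), your identity only holds on $(0,r_0)$ for an $r_0$ that has nothing to do with the radius $s$ in the theorem. (There is also an algebra slip in the displayed key identity: with the balls disjoint over coset representatives you get $|G|\,|B_r(x)| = (|G|/|G_{\hat x}|)\,|B_r(\hat x)|$, not $|G|\,|B_r(x)| = |G_{\hat x}|\,|B_r(\hat x)|$; your final formula $|B_r(x)|=|B_r(\hat x)|/|x|$ is the correct consequence, but the intermediate line as written is not.)

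The consequence is that the step ``the quotient is locally monotone by the above, hence globally monotone'' is unsupported: the above only establishes monotonicity of $|B_r(x)|/\beta(r)$ on the small initial interval $(0,r_0)$, not near an arbitrary $r$. Appealing to the coarea formula does not close this, because proving that $|B_r(x)|/\beta(r)$ is monotone is precisely the content of the theorem — you cannot invoke it. To push past $r_0$ in the manifold-reduction spirit you would need a \emph{sphere-area} comparison $|\partial B_r(x)| \le |x|^{-1}\beta'(r)$ proved pointwise along radii (where overlaps of lifted balls are irrelevant because you work along individual radial geodesics), and then integrate; that is essentially the paper's approach via \cref{cutloc} and the classical Riccati argument, not a black-box application of the classical volume comparison on $\hat U$. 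The equality discussion has a related issue: the group in the conclusion is the stabilizer $G_{\hat x}$, not the full chart group $G$, and the claim that ``the whole group $G$ fixes $\hat x$'' in the equality case is not correct (nor needed).
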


Except for the equality discussion, \cref{bgv} is \cite[Proposition 20]{Bor}.

\begin{thm}[Cheng eigenvalue comparison]\label{cheng}
Assume that $\Ric\ge(m-1)k$ on $B_r(x)$.
Then
\begin{align*}
	\lambda_0(B_r(x))\le\lambda_0(B_r(k)).
\end{align*}
Moreover, equality holds if and only if $B_r(x)$ is isometric to $G\backslash B_r(k)$,
where $G$ is a finite group of isometries of $M_k^m$ fixing the center of $B_r(k)$.
\end{thm}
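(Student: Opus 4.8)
The plan is to reduce the orbifold statement to the classical manifold case of Cheng's eigenvalue comparison theorem by passing to the local covering chart, and then to analyze the equality case separately using the rigidity in the Bishop–Gromov comparison (\cref{bgv}) together with the known rigidity for manifolds.

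First I would prove the inequality $\lambda_0(B_r(x))\le\lambda_0(B_r(k))$. The idea is the standard transplantation argument: let $u$ be the first Dirichlet eigenfunction of $B_r(k)$ in $M_k^m$, which is radial, positive on the open ball, and of the form $u=\phi\circ d_k$ for a function $\phi$ on $[0,r)$ with $\phi(r)=0$. Define a test function $f$ on $B_r(x)\subseteq O$ by $f=\phi\circ d_x$, where $d_x(\cdot)=d(x,\cdot)$; this is Lipschitz with compact support in $\overline{B_r(x)}$, hence admissible for the Rayleigh quotient \eqref{rayq} (here $V=0$). The key point is to estimate $R(f)=\int|\nabla f|^2/\int f^2$ from above by $\lambda_0(B_r(k))$. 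Using the coarea formula \cref{caf} applied to the Lipschitz function $d_x$ (which is smooth off $C(x)$, and $|C(x)|_m=0$ by \cref{cutloc}), one rewrites both integrals as integrals over $(0,r)$ against the measure $|\partial B_t(x)\cap\{d_x$ smooth$\}|_{m-1}\,dt$, with $|\nabla d_x|=1$ a.e. The Bishop–Gromov inequality \cref{bgv} controls how fast the function $t\mapsto |B_t(x)|$ (hence, after differentiating, the sphere measure) grows relative to the model $\beta(t)$; combined with the fact that $\phi$ is decreasing, a Sturm-type comparison (integration by parts against the ODE satisfied by $\phi$, exactly as in Cheng's original proof) yields $R(f)\le\lambda_0(B_r(k))$. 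Taking the infimum in \eqref{bspec} gives the inequality.

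Next I would treat the equality case. Suppose $\lambda_0(B_r(x))=\lambda_0(B_r(k))$. Then the test function $f$ above must be a minimizer, which forces equality in every estimate used in the previous step; in particular it forces equality in the Bishop–Gromov comparison \cref{bgv} for (a dense set of, hence all) radii $0<s<r$. By the rigidity clause of \cref{bgv}, $B_r(x)$ is isometric to $G\backslash B_r(k)$ for a finite group $G$ of isometries of $M_k^m$ fixing the center of $B_r(k)$. For the converse, if $B_r(x)\cong G\backslash B_r(k)$ with $G$ as stated, then lifting to the chart $B_r(k)\to G\backslash B_r(k)$ and using that a $G$-invariant test function on $B_r(k)$ descends — and conversely any test function on $G\backslash B_r(k)$ pulls back to a $G$-invariant one with the same Rayleigh quotient — shows $\lambda_0(B_r(x))=\lambda_0^G(B_r(k))$, the bottom of the Dirichlet spectrum on the space of $G$-invariant functions. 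Since the first Dirichlet eigenfunction of $B_r(k)$ is radial, hence automatically $G$-invariant (as $G$ fixes the center), we get $\lambda_0^G(B_r(k))=\lambda_0(B_r(k))$, completing the equality.

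The main obstacle I expect is the rigorous handling of the non-smoothness of $d_x$ across the cut locus and the singular set in the coarea/integration-by-parts argument over an orbifold: one must justify that $|\nabla d_x|=1$ almost everywhere on $B_r(x)$ with respect to the orbifold measure $\dv_m$ of \eqref{mb}, that the cut locus is negligible (\cref{cutloc}\eqref{cl2}), and that \cref{borz}/\cref{cutloc}\eqref{cl3} let us safely ignore the singular set $\mathcal S$ — and then to carry the Sturm comparison through with the sphere-area function only known to satisfy a differential \emph{inequality}. A clean way around much of this is to note that the argument is entirely local near $x$ and to work in a chart $(U,\hat U,G,\pi)$ with $x=\pi(\hat x)$: the ball $B_r(x)$ is the $G$-quotient of $B_r(\hat x)\subseteq\hat U$, the measure satisfies \eqref{mba}, and $d_x\circ\pi=d_{\hat x}$; thus everything reduces to Cheng's theorem on the Riemannian manifold $\hat U$ applied $G$-equivariantly, where the classical proof and the classical rigidity statement apply verbatim, and the $G$-equivariance is preserved because distance functions to a fixed point and radial eigenfunctions are automatically equivariant.
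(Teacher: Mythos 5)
Your main argument (transplanting the radial first eigenfunction $\phi\circ d_k$ of $B_r(k)$ to $f=\phi\circ d_x$ on $B_r(x)$, integrating via the coarea formula over $B_r(x)\setminus C(x)$, and running a Sturm-type comparison against the model sphere area) is exactly the ``integrate the associated inequalities along radial geodesics, using \cref{cutloc}'' route the paper indicates, and it is sound; the equality case via the rigidity clause of \cref{bgv} and the converse via $G$-invariance of the radial eigenfunction is also the intended reasoning.

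However, the ``clean way around'' you offer at the end contains a genuine gap. You claim one can choose a chart $(U,\hat U,G,\pi)$ with $x=\pi(\hat x)$ so that $B_r(x)$ is the $G$-quotient of $B_r(\hat x)\subseteq\hat U$, thereby reducing the whole statement to the manifold case on $\hat U$. This is not possible in general: $r$ is prescribed in the hypothesis, whereas orbifold charts only provide identifications $U\cong G\backslash\hat U$ on sufficiently \emph{small} neighborhoods of $x$, and nothing forces the metric ball $B_r(x)$ (let alone $B_r(\hat x)$) to lie inside a single chart domain. For instance on a good orbifold $\Gamma\backslash M$ the ``chart'' around $x$ is a quotient by the local stabilizer $G=\Gamma_{\hat x}$, and enlarging it until it contains $B_r(\hat x)$ will generically bring in other $\Gamma$-orbit points, destroying the property that $\pi$ induces a homeomorphism onto $G\backslash\hat U$. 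The technical issues you flagged (negligibility of the cut locus and of $\mathcal S$, $|\nabla d_x|=1$ a.e., Sturm comparison with an area function satisfying only an inequality) must be handled globally on $B_r(x)$ as in your first two paragraphs --- precisely because \cref{cutloc}\eqref{cl2}--\eqref{cl3} and the polar-coordinate decomposition of $B_r(x)\setminus C(x)$ are genuinely needed and cannot be sidestepped by retreating to one chart.
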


\begin{proof}[Sketch of proofs of Theorems \ref{bgv} and \ref{cheng}]
Recall that the standard proofs of Theorems \ref{bgv} and \ref{cheng}
in the manifold case are obtained via integrating associated inequalities along radial geodesics;
see \cite[Theorem 1.1]{Ch} for \cref{cheng}.
Using \cref{cutloc},
the same procedure leads to the above assertions for orbifolds.
\end{proof}

\subsection{Cheeger constants}
\label{suscc}
The \emph{Cheeger constant} of a Riemannian orbifold $O$ of dimension $m$
is defined to be
\begin{align}\label{chco}
	h(O) = \inf_A \frac{|\partial A|_{m-1}}{|A|_m},
\end{align}
where the infimum is taken over all compact domains $A$ of $O$ with smooth boundary,
and where $|.|_k$ indicates $k$-dimensional Riemannian volume.
The Cheeger constant is related with the bottom of the spectrum of the Laplacian on $O$
via the \emph{Cheeger inequality},
\begin{align}\label{chine}
	\lambda_0(O) \ge \frac{1}{4} h(O)^2.
\end{align}
For the convenience of the reader,
we present a short proof of \eqref{chine}.
Namely, for the Rayleigh quotient $R(f)$ of a non-vanishing function $f\in C^\infty_c(O)$,
\begin{align*}
	\int_O |\nabla f^2|
	&= \int_O 2|f||\nabla f| \\
	&\le 2\left(\int_O f^2\right)^{1/2}\left(\int_O|\nabla f|^2\right)^{1/2}
	\le 2R(f)^{1/2} \int_O f^2,
\end{align*}
by the Schwarz inequality.
On the other hand, by \cref{caf}, the definition of $h(O)$, and Cavalieri's principle,
respectively,
\begin{align*}
	\int_O|\nabla f^2|
	&= \int_0^\infty |\{f^2=t\}|_{m-1} \dt \\
	&\ge h(O) \int_0^\infty |\{f^2\ge t\}|_m \dt
	= h(O) \int_O f^2.
\end{align*}
We conclude that $h(O)^2\le4R(f)^2$.
Now $\lambda_0(O)$ is the infimum of Rayleigh quotients $R(f)$ over non-vanishing $f\in C^\infty_c(O)$,
hence \eqref{chine} follows.

In analogy with \eqref{bespec},
we define the \emph{asymptotic Cheeger constant of $O$} by
\begin{align}\label{bespeca}
	h_{\ess}(O) = \sup h(O\setminus K),
\end{align}
where the supremum is taken over all compact subsets $K$ of $O$.
In \eqref{china}, we obtain an analogue of \eqref{chine},
relating the asymptotic Cheeger constant to the bottom of the essential spectrum of $O$.

Recall that, for a relatively compact open domain $D$ in a Riemannian manifold $M$,
the Cheeger constant of $D$ with respect to Neumann boundary condition is defined to be
\begin{align}\label{chcon}
	h^{N}(D) = \inf \frac{|\partial A \cap D|}{|A|},
\end{align}
where the infimum is taken over all domains $A\subseteq D$ with $|A|\le|D|/2$
and smooth intersection $\partial A \cap D$.

\begin{lem}[Lemma 5.1 of Buser \cite{Bu}]\label{bulem}
If $M$ is of dimension $m$ and complete with Ricci curvature bounded from below by $1-m$,
$D\subseteq M$ is starlike with respect to a point $x\in M$,
and $B_r(x)\subseteq D\subseteq B_R(x)$, then
\begin{align*}
	h^N(D) \ge C_{m}^{1+R} \frac{r^{m-1}}{R^m}.
\end{align*}
\end{lem}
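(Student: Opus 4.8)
The plan is to deduce the bound from the infinitesimal Bishop--Gromov comparison underlying Theorem~\ref{bgv}, applied along the geodesics issuing from $x$, following the classical line of argument. We may assume $M$ complete; since $D\subseteq B_R(x)$ it is relatively compact, and it suffices to show
\begin{align*}
	|\partial A\cap D|\ \ge\ C_m^{1+R}\,\frac{r^{m-1}}{R^m}\,|A|
\end{align*}
for every $A\subseteq D$ with $|A|\le|D|/2$ and smooth trace $\Sigma:=\partial A\cap D$. First I would foliate $D$ by the radial geodesics $\gamma_v$, $v$ ranging over the unit sphere $S_x\subseteq T_xM$. Because $D$ is starlike with respect to $x$, each $\gamma_v$ meets $D$ in a single interval $[0,\rho(v))$ with $r\le\rho(v)\le R$, and the geodesics witnessing starlikeness, being minimizing, avoid the cut locus of $x$; hence geodesic polar coordinates $(t,v)\mapsto\exp_x(tv)$ are valid on $D$ up to a set of measure zero, with volume form $J(t,v)\,dt\,dv$ and $\lim_{t\to0}J(t,v)/t^{m-1}=1$.

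Next I would record the two facts extracted from the comparison (with comparison curvature $k=-1$, since $\Ric\ge1-m$): the pointwise bound $J(t,v)\le(\sinh t)^{m-1}$ and the relative inequality $J(t,v)/J(s,v)\le(\sinh t/\sinh s)^{m-1}$ for $0<s<t$; equivalently, for a cone of directions $W\subseteq S_x$, the density $t\mapsto|C_W\cap B_t(x)|/\beta(t)$ is nonincreasing, $\beta(t)=|B_t(-1)|$. For almost every $v$, the geodesic $\gamma_v$ meets $\Sigma$ transversally in finitely many points. Writing $I_v=\{t\in(0,\rho(v)):\gamma_v(t)\in A\}$, Fubini in polar coordinates gives $|A|=\int_{S_x}\int_{I_v}J\,dt\,dv$ and $|D\setminus A|=\int_{S_x}\int_{(0,\rho(v))\setminus I_v}J\,dt\,dv$, while projecting $\Sigma$ along the radial field $\partial_t$ and discarding the part of $\Sigma$ tangent to it yields
\begin{align*}
	|\Sigma|\ \ge\ \int_\Sigma|\langle\nu_\Sigma,\partial_t\rangle|\,dA_\Sigma
	\ =\ \int_{S_x}\Bigl(\sum_{p\in\gamma_v\cap\Sigma}J(d(x,p),v)\Bigr)\,dv,
\end{align*}
where $\nu_\Sigma$ is a unit normal of $\Sigma$; the inner sum is precisely the perimeter of $I_v$ in the weighted interval $\bigl((0,\rho(v)),\,J(\cdot,v)\,dt\bigr)$.

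The heart of the matter is then a one-dimensional weighted relative-isoperimetric estimate: along each $\gamma_v$ the weighted perimeter of $I_v$ is at least $h_v\min\bigl(\int_{I_v}J,\int_{(0,\rho(v))\setminus I_v}J\bigr)$, with $h_v=\inf_{0<s<\rho(v)}J(s,v)\big/\min\bigl(\int_0^sJ(\cdot,v),\int_s^{\rho(v)}J(\cdot,v)\bigr)$, and I would bound $h_v$ from below using only the two facts above, which reduces to elementary estimates for $(\sinh s)^{m-1}\big/\!\int_0^s(\sinh t)^{m-1}\,dt$ and for the model volume ratio $\beta(R)/\beta(r)$. Integrating over $S_x$ and using $|A|\le|D|/2$ to pass from $\min(|A|,|D\setminus A|)$ to a fixed multiple of $|A|$ gives the claim, the constant being built from $\beta(R)/\beta(r)$, whence the shape $r^{m-1}/R^m$ and the exponential factor $C_m^{1+R}$.

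I expect the real obstacles to be twofold. First, under a mere Ricci lower bound the density $J$ has no positive lower bound---for instance $J=(\sin t)^{m-1}$ on a large ball in $S^m$ is small near the outer radius---so the area--coarea comparison cannot be made pointwise on $\Sigma$; one is forced to argue globally, transporting volume from $B_R(x)$ down to $B_r(x)$ by the Bishop--Gromov monotonicity. Second, the radial projection discards the part of $\Sigma$ tangent to $\partial_t$, which is exactly the dangerous configuration when $A$ is ``cone-like'' (a union of radial geodesics); this forces a case split of the directions $v\in S_x$ according to whether $\gamma_v$ lies entirely in $A$, entirely in $D\setminus A$, or is split by $\Sigma$, together with a separate lower bound on $|\Sigma|$ in the first two cases. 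Carrying out this bookkeeping so that the dependence on $R$ is no worse than $C_m^{1+R}$, uniformly in $r\le R$, is the delicate point; transversality, the change of variables off the cut locus, Fubini, and the one-dimensional inequality are routine.
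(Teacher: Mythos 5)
The paper does not prove this lemma; it is stated with an explicit attribution to Buser \cite{Bu} and used as a black box, so there is no in-paper argument to compare against, and your proposal has to stand on its own. Your setup --- geodesic polar coordinates about $x$, valid on $D$ away from a null set because $D$ is starlike, the Bishop--Gromov comparison for the radial density $J(t,v)$ with model curvature $-1$, radial coarea projecting $\Sigma=\partial A\cap D$ onto $S_x$, and a one-dimensional weighted Cheeger inequality along rays --- is a natural place to start, and you correctly flag the ``cone-like'' configurations as the danger. But that flag marks the entire content of the lemma, not a book-keeping remainder, and the ``separate lower bound on $|\Sigma|$ in the first two cases'' that you invoke is precisely what your framework does not deliver.

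To see that this is the generic obstruction and not a degenerate corner, take $M=\R^m$, $D=B_R(x)$, and $A$ a half-ball cut by a hyperplane through $x$, so that $|A|=|D|/2$ and $\Sigma$ is an equatorial disk of positive $(m-1)$-measure. For almost every $v\in S_x$, the ray $\gamma_v$ lies entirely in $A$ or entirely in $D\setminus A$, so the weighted relative perimeter of $I_v$ in $(0,\rho(v))$ vanishes identically; and $\nu_\Sigma$ is orthogonal to the radial field $\partial_t$ at every point of $\Sigma$, so your coarea estimate degenerates to $|\Sigma|\ge 0$. The radial-ray argument yields nothing on this example, and integrating over $S_x$ cannot recover the loss. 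Closing the gap needs a mechanism of a different kind: Buser's proof does not reduce to coarea along rays from the star center, and a tangential or spherical coarea is not available under a Ricci lower bound alone, since one has no control over the induced geometry of geodesic spheres. As written, the proposal identifies the right framework but stops precisely where the substance of Buser's Lemma~5.1 begins.
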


\section{Analysis on orbifolds}
\label{secano}

Throughout this section, let $O$ be a Riemannian orbifold.
Then the Laplace operator $\Delta$ is well defined on $C^\infty(O)$.

\begin{lem}\label{green}
	For functions $f,g\in C^\infty(O)$ and a compact domain $D\subseteq O$ with smooth boundary $\partial D$,
	we have
	\begin{align*}
		\int_D f\Delta g = \int_D \la\nabla f,\nabla g\ra  - \int_{\partial D} f\partial_\nu g.
	\end{align*}
\end{lem}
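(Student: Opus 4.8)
The plan is to reduce the orbifold Green's formula to the classical Green's formula for Riemannian manifolds by working in orbifold charts and using a partition of unity. The statement is really a statement about the regular set $\mathcal{R}$ together with a matching of boundary terms, so the first order of business is to observe that all the integrals in the statement are, by the definition \eqref{mb} of the orbifold measure and \eqref{mdb} of the boundary measure, integrals over $\mathcal{R}$ and $\partial D \cap \mathcal{R}$ respectively, and that $\partial D \cap \mathcal{R} = \mathcal{R}_{\partial D}$ is a smooth hypersurface in the Riemannian manifold $\mathcal{R}$ with a well-defined inner unit normal $\nu$.

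First I would cover the compact set $\overline{D}$ by finitely many chart domains $U_1, \dots, U_N$ with charts $a_i = (U_i, \hat U_i, G_i, \pi_i)$, choosing the charts near $\partial D$ to be adapted charts in the sense of \cref{susdom}, and pick a smooth partition of unity $\{\chi_i\}$ on a neighborhood of $\overline{D}$ subordinate to this cover (smoothness of functions on orbifolds having been defined via charts). By linearity of both sides of the asserted identity in $f$, it suffices to prove the formula with $f$ replaced by $\chi_i f$ for each $i$, so we are reduced to the case where $f$ is supported in a single chart domain $U = U_i$. Now lift everything to the codomain $\hat U$: set $\hat f = f \circ \pi$ and $\hat g = g \circ \pi$, which are smooth $G$-invariant functions on $\hat U$, let $\hat D = \pi^{-1}(D \cap U)$, which is a manifold with smooth boundary $\pi^{-1}(\partial D \cap U)$ (for an adapted chart this is exactly $\{0\} \times \hat V$, and for a chart away from $\partial D$ it is either all of $\hat U$ or empty up to closure issues on $\supp \hat f$), and apply the classical Green's formula on the Riemannian manifold $\hat U$:
\begin{align*}
	\int_{\hat D} \hat f \,\Delta \hat g = \int_{\hat D} \la \nabla \hat f, \nabla \hat g \ra - \int_{\partial \hat D} \hat f \,\partial_\nu \hat g.
\end{align*}
Since $\Delta$ on $O$ is by definition the operator whose lift to any chart codomain is the Riemannian Laplacian, $\widehat{\Delta g} = \Delta \hat g$; similarly $|\nabla \hat f|$, the metric pairing, and the normal derivative are the lifts of the corresponding orbifold quantities because the metric is $G$-invariant and the change-of-charts maps are isometries. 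Finally divide by $|G|$: by \eqref{mba} the left and middle terms become the orbifold integrals over $D \cap U$, and by \eqref{mdba} the boundary term becomes the orbifold integral over $\partial D \cap U$, using that the singular points form a set of measure zero in $\hat D$ and in $\pi^{-1}(\partial D)$ so they contribute nothing. Summing over $i$ recovers the stated identity.

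The main obstacle is bookkeeping at the boundary rather than anything conceptually deep: one must make sure that the charts covering $\overline{D}$ can simultaneously be taken adapted wherever they meet $\partial D$ (which is guaranteed by the definition of a domain with smooth boundary in \cref{susdom}), that the inner unit normal $\nu$ on $\mathcal{R}_{\partial D}$ lifts consistently across chart overlaps (this follows from $G$-invariance of the metric and of the defining function, together with the fact noted after \eqref{adacha} that $\partial/\partial r$ is $G$-invariant and nowhere zero, so $\nu$ is intrinsic), and that the chart near an interior point not on $\overline{D}$ simply contributes zero. One also needs the measure-zero statements for singular sets in codomains, which were already established in \cref{susior} in both the $m$-dimensional and $(m-1)$-dimensional cases, precisely so that \eqref{mba} and \eqref{mdba} hold; with these in hand, the orbifold Green's formula is just the $|G|$-averaged manifold Green's formula, patched together.
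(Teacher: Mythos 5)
Your proof takes essentially the same approach as the paper's: reduce by a partition of unity to the case where $f$ is supported in a single (adapted) chart domain, lift to the codomain, apply the classical Green's formula there, and descend by dividing by $|G|$ using the measure identities \eqref{mba} and \eqref{mdba}. You spell out the bookkeeping (consistency of the normal $\nu$ across charts, negligibility of singular sets) more explicitly than the paper, which only writes out the boundary-chart case, but the argument is the same.
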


\begin{proof}
	Using a smooth partition of unity on $O$, the proof reduces to the two cases
	where the support of $f$ is contained in a coordinate domain in the interior of $D$
	or in one for the boundary of $D$.
	We only discuss the less trivial second case,
	where the codomain $\hat U=\hat V\times[0,\ve)$
	with the corresponding finite group $G$ acting trivially on the factor $[0,\ve)$.
	We obtain
	\begin{align*}
		|G|\int_D f\Delta g
		&= \int_{\hat U} \hat f\Delta\hat g \\
		&= \int_{\hat U} \la\nabla\hat f,\nabla\hat g\ra - \int_{\hat V} \hat f\partial_\nu\hat g \\
		&= |G|\int_D \la\nabla f,\nabla g\ra - |G|\int_{\partial D} f\partial_\nu g,
	\end{align*}
	where $\hat f=f\circ \pi$, $\hat g=g\circ \pi$
	and $\nu$ denotes the exterior normal vector field of $\hat U$ and $D$, respectively.
\end{proof}

We consider a Schr\"odinger operator $S=\Delta+V$ on $O$ with smooth potential $V$. In view of \cref{green}, the operator
\[
	S \colon D(S) \subseteq L^{2}(O) \to L^{2}(O)
\]
with domain $D(S)=C^{\infty}_{c}(O)$ is symmetric.
It is evident that $S$ is bounded from below if $V$ is bounded from below.

Assume from now on that $S$ is a Schr\"odinger operator on $O$ that is bounded from below,
and choose $\beta \in \mathbb{R}$ such that
\[
	\langle Sf , f \rangle_{L^{2}} \geq \beta \| f \|_{L^{2}}^{2}
\]
for any $f \in C^{\infty}_{c}(O)$.
Denote by $H_{S} \subseteq L^{2}(O)$ the completion of $C^{\infty}_{c}(O)$ with respect to the inner product
\[
	\langle f , h \rangle_{H_S} = \langle f ,h \rangle_{L^{2}} + \langle (S - \beta) f , h \rangle_{L^{2}}.
\]
Since $\|f\|_{H_S}\ge\|f\|_{L^2}$, we view $H_S\subseteq L^2(O)$.
The Friedrichs extension of $S$ is the self-adjoint operator
\begin{align*}
	\bar S \colon D(\bar S) \subseteq L^{2}(O) \to L^{2}(O)
\end{align*}
with $\bar{S} = S^*$ on its domain $D(\bar{S}) = H_{S} \cap D(S^*)$,
where $S^*$ denotes the adjoint of $S$. 

We denote by $\lambda_0(S)$ the bottom of the spectrum of $\bar S$.
The Rayleigh quotient of a non-zero $f \in H_{S}$ with respect to $S$
is defined by
\begin{align}\label{rayq2}
	R_{S}(f)
	= \frac{\| f \|_{H_{S}}^{2}}{\| f \|_{L^{2}}^{2}} + \beta -1.
\end{align}
It is well known from functional analysis that
\begin{align}\label{bspec2}
	\lambda_{0}(S) = \inf R_{S}(f),
\end{align}
where the infimum is taken over all non-zero $f \in C^{\infty}_{c}(O)$
or over all non-zero $f \in H_{S}$.

\begin{lem}\label{Lipschitz}
	Any compactly supported Lipschitz function on $O$ belongs to $H_{S}$.
\end{lem}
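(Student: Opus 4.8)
The plan is to exhibit a given compactly supported Lipschitz function $f$ on $O$ as an $H_S$-limit of a sequence in $C^\infty_c(O)$ whose supports stay inside a fixed compact set; since $H_S\subseteq L^2(O)$, this places $f$ in $H_S$. The elementary observation driving everything is that, for $h\in C^\infty_c(O)$, \cref{green} gives
\[
	\|h\|_{H_S}^2 = \|h\|_{L^2}^2 + \int_O|\nabla h|^2 + \int_O(V-\beta)h^2 ,
\]
so that if $\supp h$ lies in a fixed compact set $L$ and $c_L:=\sup_L|V-\beta|$, then $\|h\|_{H_S}^2\le(1+c_L)\|h\|_{L^2}^2+\int_O|\nabla h|^2$. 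Thus, on functions supported in $L$, the $H_S$-norm is dominated by a constant multiple of the Sobolev norm $\big(\|\cdot\|_{L^2}^2+\|\nabla(\cdot)\|_{L^2}^2\big)^{1/2}$, and it suffices to approximate $f$ in the latter norm by functions in $C^\infty_c(O)$ supported in a fixed compact neighborhood of $\supp f$. Here $\nabla f$ is the almost everywhere defined gradient of $f$ on the regular set $\mathcal R$; since $f$ is Lipschitz, $|\nabla f|\le\Lip(f)$ a.e.

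First I would localize by a smooth partition of unity. Covering $K:=\supp f$ by finitely many chart domains $U_1,\dots,U_N$ and choosing smooth $\chi_i$ with $\supp\chi_i\subseteq U_i$ and $\sum_i\chi_i=1$ on a neighborhood of $K$, each $\chi_i f$ is Lipschitz with $\supp(\chi_i f)$ a compact subset of $U_i$, and $f=\sum_i\chi_i f$. It is therefore enough to approximate a compactly supported Lipschitz function $g$ whose support is a compact subset of the domain $U$ of a single chart $(U,\hat U,G,\pi)$.

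Next I would lift to the codomain and mollify equivariantly. Fix a $G$-invariant Riemannian metric on $\hat U$. Then $\hat g:=g\circ\pi$ is $G$-invariant and Lipschitz on the manifold $\hat U$ (because $\pi$ does not increase distances), with compact support, so by Rademacher's theorem $\hat g\in W^{1,2}(\hat U)$ with $|\nabla\hat g|\le\Lip(g)$ a.e. Standard mollification on $\hat U$, with a cutoff to keep supports inside a fixed compact subset of $\hat U$, produces $\psi_n\in C^\infty_c(\hat U)$ with $\psi_n\to\hat g$ in $W^{1,2}(\hat U)$; averaging over the finite group, $\hat g_n:=\frac1{|G|}\sum_{\gamma\in G}\psi_n\circ\gamma$ are again smooth and compactly supported, are now $G$-invariant, and still converge to $\hat g$ in $W^{1,2}(\hat U)$, using that $\hat g$ is $G$-invariant and that $G$ acts by isometries. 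Being smooth and $G$-invariant, $\hat g_n$ descends to $g_n\in C^\infty_c(O)$ with $\supp g_n\subseteq U$ in a fixed compact set, and by \eqref{mba} (extended from characteristic functions to general integrable functions) we have $|G|\,\|g_n-g\|_{L^2(O)}^2=\|\hat g_n-\hat g\|_{L^2(\hat U)}^2$ and $|G|\,\|\nabla(g_n-g)\|_{L^2(O)}^2=\|\nabla(\hat g_n-\hat g)\|_{L^2(\hat U)}^2$, so $g_n\to g$ and $\nabla g_n\to\nabla g$ in $L^2(O)$. By the first paragraph, $(g_n)$ is then Cauchy in $H_S$; its $H_S$-limit agrees with its $L^2$-limit $g$, so $g\in H_S$, and summing over $i$ gives $f\in H_S$.

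I do not expect a genuine obstacle here; the work is orbifold bookkeeping. The points to be careful about are that a Lipschitz function on $O$ lifts to a $G$-invariant Lipschitz (hence $W^{1,2}$) function on every chart codomain, that mollification can be carried out $G$-equivariantly with controlled supports, and that orbifold $L^2$-integrals of functions and of gradients are computed chartwise, with the factor $|G|$, via \eqref{mba}. The passage from $W^{1,2}$-convergence to $H_S$-convergence on a fixed compact support is immediate from \cref{green} and the local boundedness of $V$.
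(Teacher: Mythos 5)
Your proposal is correct and follows essentially the same route as the paper: reduce to a single chart by a partition of unity, lift to the codomain where the pullback is Lipschitz, approximate in $H^1_0$ by smooth compactly supported functions, $G$-average so they descend, and then pass from $H^1_0$-convergence to $H_S$-convergence using the local boundedness of $V$. Your write-up is somewhat more explicit (the Sobolev-to-$H_S$ comparison up front, Rademacher, the $|G|$-factor bookkeeping via \eqref{mba}), but no new idea is involved.
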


\begin{proof}
	It suffices to prove the assertion for any compactly supported Lipschitz function  $f$
such that there exists a coordinate chart $(U, \hat{U} ,G, \pi)$ with $U$ precompact 
and $\supp f \subseteq U$.
Setting $\hat{f} := f \circ \pi$, we compute
	\[
	| \hat{f}(x) - \hat{f}(y) | = |f(\pi(x)) - f(\pi(y))| \leq C d(\pi(x) , \pi(y)) \leq C d(x,y),
	\]
	where $x,y \in \hat{U}$ and $C$ is the Lipschitz constant of $f$.
	This yields that $\hat{f}$ is a Lipschitz function, and, therefore,
	there exists a sequence $(\hat{f}_{n})_{n \in \mathbb{N}}$ in $C^{\infty}_{c}(\hat{U})$ 	
	with $\hat{f}_{n} \rightarrow \hat{f}$ in $H_{0}^{1}(\hat{U})$.
	It is evident that the functions
	\[
	\hat{h}_{n} := \frac{1}{|G|} \sum_{g \in G} \hat{f}_{n} \circ g
	\]
	descend to functions $h_{n} \in C^{\infty}_{c}(O)$.
	Using that $\hat{f}$ is $G$-invariant,
	we have that $\hat{h}_{n} \rightarrow \hat{f}$ in $H_{0}^{1}(\hat{U})$
	and, hence, that $h_{n} \rightarrow f$ in $H_{0}^{1}(U)$.
	Since the $h_n$ are smooth,
\begin{align*}
	\|h_n-h_m\|_{H_S}^2
	\leq (1-\beta + C_{V})\|h_n-h_m\|_{L^2}^2 + \|\nabla h_n-\nabla h_m\|_{L^2}^2,
\end{align*}
where $C_{V}$ is the supremum of $|V|$ on $U$.
Therefore $h_{n} \rightarrow f$ in $H_{S}$, as we wished.
\end{proof}

As an immediate consequence of \cref{Lipschitz},
we obtain the following assertion from the introduction (see \eqref{bspec}).

\begin{cor}\label{bspec3}
The bottom $\lambda_0(S)$ of the spectrum of $\bar S$ is given by
\begin{align*}
		\lambda_0(S) = \inf R_S(f),
\end{align*}
where the infimum is taken over all non-zero $f \in \Lip_{c}(O)$.
\end{cor}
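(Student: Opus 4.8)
The plan is to deduce \cref{bspec3} from the characterization \eqref{bspec2} of $\lambda_0(S)$ together with \cref{Lipschitz}. The point is that \eqref{bspec2} already gives $\lambda_0(S)=\inf R_S(f)$ over non-zero $f\in C^\infty_c(O)$ and also over non-zero $f\in H_S$; and \cref{Lipschitz} tells us that $\Lip_c(O)\subseteq H_S$. So we have the chain of inclusions $C^\infty_c(O)\subseteq\Lip_c(O)\subseteq H_S$, and taking infima of $R_S$ over larger sets can only decrease the value. Hence
\begin{align*}
	\lambda_0(S) = \inf_{0\ne f\in C^\infty_c(O)} R_S(f)
	\ge \inf_{0\ne f\in\Lip_c(O)} R_S(f)
	\ge \inf_{0\ne f\in H_S} R_S(f)
	= \lambda_0(S),
\end{align*}
where the outer equalities are \eqref{bspec2} and the inner inequalities come from the inclusions. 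Therefore all quantities in the chain coincide, which is exactly the asserted formula.

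A small point to address is that $R_S(f)$ for $f\in\Lip_c(O)$ should be understood via \eqref{rayq2}, i.e.\ as $\|f\|_{H_S}^2/\|f\|_{L^2}^2+\beta-1$, which makes sense precisely because \cref{Lipschitz} places $f$ in $H_S$; one should remark that this agrees with the geometric expression \eqref{rayq} from the introduction when $f$ is Lipschitz, since for smooth $f$ the $H_S$-norm unwinds to $\int_O(|\nabla f|^2+Vf^2)+(1-\beta)\int_O f^2$ and this identity extends to $\Lip_c(O)$ by approximating $f$ in $H_S$ by the smooth functions $h_n$ produced in the proof of \cref{Lipschitz} (along which $\nabla h_n\to\nabla f$ in $L^2$, the weak gradient of a Lipschitz function). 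This reconciles the statement of \cref{bspec3} with \eqref{bspec} in the introduction.

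I do not expect a genuine obstacle here: the entire content is the monotonicity of an infimum under set inclusion, and the substantive work — that compactly supported Lipschitz functions lie in $H_S$ — has already been carried out in \cref{Lipschitz}. The only thing requiring a sentence of care is the interpretation of the Rayleigh quotient for merely Lipschitz $f$, as indicated above. Thus the proof is a short two-line squeeze argument preceded by the observation $C^\infty_c(O)\subseteq\Lip_c(O)\subseteq H_S$.
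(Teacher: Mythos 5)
Your proof is correct and is exactly the argument the paper has in mind when it calls \cref{bspec3} an ``immediate consequence'' of \cref{Lipschitz}: the inclusion chain $C^\infty_c(O)\subseteq\Lip_c(O)\subseteq H_S$ together with the two-sided characterization \eqref{bspec2} gives the result by monotonicity of the infimum. Your side remark reconciling the abstract Rayleigh quotient \eqref{rayq2} with the geometric one \eqref{rayq} for Lipschitz $f$ (via the $H^1_0$-convergence $\nabla h_n\to\nabla f$ established in the proof of \cref{Lipschitz}) is the right point of care and is handled correctly.
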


\begin{lem}\label{eigen}
	Suppose that $O$ is connected and that $\lambda_{0}(S)$ is an eigenvalue of $\bar{S}$.
	Then any eigenfunction $\vf$ of $\bar{S}$ corresponding to $\lambda_{0}(S)$ is smooth and nowhere vanishing.
	In particular, the eigenspace corresponding to $\lambda_{0}(S)$ is one-dimensional.
\end{lem}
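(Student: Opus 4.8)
The plan is to adapt the classical proof that the ground state of a Schr\"odinger operator on a connected manifold is smooth, positive, and simple, inserting two steps that are specific to orbifolds: passing the (weak) eigenvalue equation to the codomains of charts so that manifold elliptic theory applies, and propagating positivity across chart overlaps and the singular set by means of the connectedness of $O$.

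\emph{Smoothness.} First, I would record that the eigenfunction $\varphi\in D(\bar S)$ satisfies the weak equation
\[
  \int_O\bigl(\langle\nabla\varphi,\nabla\psi\rangle+V\varphi\psi\bigr)=\lambda_0(S)\int_O\varphi\psi
  \qquad\text{for all }\psi\in C^\infty_c(O);
\]
this follows from $\varphi\in H_S$ (so $\varphi\in H^1_{\mathrm{loc}}(O)$ with its weak gradient), from $\bar S\varphi=S^*\varphi=\lambda_0(S)\varphi$ on $D(\bar S)$, and from \cref{green} together with the density of $C^\infty_c(O)$ in $H_S$, exactly as for manifolds. Now fix a chart $(U,\hat U,G,\pi)$. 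Given $\eta\in C^\infty_c(\hat U)$, the $G$-average $\frac1{|G|}\sum_{g\in G}\eta\circ g$ descends to some $\psi\in C^\infty_c(U)\subseteq C^\infty_c(O)$; testing the weak equation against $\psi$, then using \eqref{mba} to lift the integrals to $\hat U$ and the $G$-invariance of $\hat\varphi:=\varphi\circ\pi$ to replace the $G$-average of $\eta$ by $\eta$, one obtains that $\hat\varphi\in H^1_{\mathrm{loc}}(\hat U)$ is a weak solution of $\Delta\hat\varphi+\hat V\hat\varphi=\lambda_0(S)\hat\varphi$ on the manifold $\hat U$. Interior elliptic regularity gives $\hat\varphi\in C^\infty(\hat U)$, hence $\varphi$ is smooth on $O$ by definition of smoothness for orbifold functions.

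\emph{Non-vanishing.} Next I would use the standard fact (verbatim as for manifolds, since $|\nabla|\varphi||=|\nabla\varphi|$ a.e.\ and $|\varphi|^2=\varphi^2$) that $|\varphi|\in H_S$ with $R_S(|\varphi|)\le R_S(\varphi)=\lambda_0(S)$, so that $R_S(|\varphi|)=\lambda_0(S)$, i.e.\ $|\varphi|$ is a minimizer of the Rayleigh quotient over $H_S\setminus\{0\}$. By the usual Euler--Lagrange argument (via \eqref{rayq2} and \eqref{bspec2}), a minimizer lies in $D(\bar S)$ and is an eigenfunction for $\lambda_0(S)$; applying the smoothness step to it, $|\varphi|$ has a smooth representative $w\ge0$ solving $\Delta w=(\lambda_0(S)-V)w$. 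Let $Z=\{w=0\}$, which is closed. If $x\in Z$, choose a connected chart $(U,\hat U,G,\pi)$ around $x$; then $w\circ\pi\ge0$ is a smooth solution of a linear elliptic equation on the connected manifold $\hat U$ vanishing somewhere, so by the Harnack inequality (or the strong maximum principle) $w\circ\pi\equiv0$, whence $U\subseteq Z$. Thus $Z$ is also open, and since $O$ is connected, $Z=\emptyset$ or $Z=O$; the latter would force $\varphi=0$ in $L^2(O)$. Hence $w>0$ everywhere, and since $w$ and the pointwise absolute value of the (smooth) function $\varphi$ are continuous and agree a.e., they agree everywhere, so $|\varphi(x)|=w(x)>0$ for every $x\in O$.

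\emph{Simplicity, and the main difficulty.} Finally, if $\varphi_1,\varphi_2$ are eigenfunctions for $\lambda_0(S)$ --- real-valued without loss of generality, otherwise treat real and imaginary parts separately --- fix $x_0\in O$ and set $\psi=\varphi_2(x_0)\varphi_1-\varphi_1(x_0)\varphi_2$. This is an eigenfunction for $\lambda_0(S)$ vanishing at $x_0$; were it non-zero it would be nowhere vanishing by the previous step, a contradiction, so $\psi\equiv0$ and $\varphi_1,\varphi_2$ are proportional. The proof presents no essential difficulty beyond this bookkeeping: the functional-analytic core (minimizer $\Rightarrow$ eigenfunction, the lattice property of $H_S$) is identical to the manifold argument, and I expect the only point requiring genuine care to be the orbifold-theoretic one --- verifying that the weak eigenvalue equation on $O$ descends to a bona fide weak elliptic equation on each chart codomain $\hat U$ in a $G$-equivariant way, so that the manifold regularity theorem and the Harnack inequality may be invoked there, and then using the connectedness of $O$ to glue.
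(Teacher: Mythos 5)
Your proposal is correct and follows essentially the same strategy as the paper: smoothness via elliptic regularity on chart codomains, then $|\varphi|\in H_S$ with $R_S(|\varphi|)=\lambda_0(S)$, so $|\varphi|$ is itself a ground state, smooth, and (by the maximum principle) nowhere zero, which then gives simplicity. You fill in more orbifold-specific detail than the paper does (the $G$-averaging of test functions, the Harnack/strong maximum principle applied on each $\hat U$ and propagated via connectedness of $O$), and this is a helpful expansion rather than a deviation.

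Two small remarks on detail. First, the step ``$|\varphi|\in H_S$ verbatim as for manifolds'' deserves a word: since $H_S$ is defined as a completion of $C^\infty_c(O)$, one cannot simply check that $|\varphi|$ has finite $H_S$-norm, but must show it lies in the closure. The paper handles this by taking $f_n\in C^\infty_c(O)$ with $f_n\to\varphi$ in $H_S$, noting that $|f_n|\in H_S$ with $\|\,|f_n|\,\|_{H_S}\le\|f_n\|_{H_S}$, passing to a weakly convergent subsequence, and identifying its weak limit with $|\varphi|$ through the $L^2$-convergence $|f_n|\to|\varphi|$. You should include this, or an equivalent density argument, rather than assert it. Second, your simplicity argument (forming the linear combination $\varphi_2(x_0)\varphi_1-\varphi_1(x_0)\varphi_2$ vanishing at $x_0$) is fine and standard, though the paper phrases it differently (two nowhere-vanishing eigenfunctions cannot be $L^2$-orthogonal); both reduce to the fact, which you already proved, that a nonzero ground state cannot vanish at a point. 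Neither version requires extra work.
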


\begin{proof}
	We know from elliptic regularity that any eigenfunction $\vf$ is smooth.
	Since $\vf \in H_{S}$, there exists a sequence $(f_{n})_{n \in \mathbb{N}}$ in $C^{\infty}_{c}(O)$
	with $f_{n} \rightarrow \vf$ in $H_{S}$.
	Then the sequence $(|f_{n}|)_{n \in \mathbb{N}}$ belongs to $H_S$ and is bounded in $H_{S}$.
	Hence it has a weakly convergent subsequence in $H_{S}$.
	On the other hand, $|f_{n}| \rightarrow |\vf|$ in $L^{2}(O)$.
	Therefore, $|\vf| \in H_{S}$ and $R_{S}(|\vf|) = \lambda_{0}(S)$,
	which yields that $|\vf|$ is an eigenfunction of $\bar{S}$ corresponding to $\lambda_{0}(S)$,
	and, in particular, that $|\vf|$ is smooth.
	From the maximum principle it follows that $|\vf|$ is positive, that is, $\vf$ is nowhere vanishing.
	The second assertion is a consequence of the first
	since the first implies that there are no $L^{2}$-perpendicular eigenfunctions corresponding to $\lambda_{0}(S)$.
\end{proof}

\begin{lem}\label{min seq}
	Suppose that $O$ is connected and that $\lambda_{0}(S) < \lambda_{\ess}(S)$.
	Let $(f_{n})_{n \in \mathbb{N}}$ be a sequence in $\Lip_{c}(O)$
	with $\| f_{n} \|_{L^{2}} \rightarrow 1$ and $R_{S}(f_{n}) \rightarrow \lambda_{0}(S)$.
	Then there exists a subsequence $(f_{n_{k}})_{k \in \mathbb{N}}$ such that $f_{n_{k}} \rightarrow \vf$
	in $L^{2}(O)$ for some eigenfunction $\vf$ of $\bar{S}$ corresponding to $\lambda_{0}(S)$.
\end{lem}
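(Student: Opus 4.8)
The plan is to run the standard concentration-compactness argument for minimizing sequences, adapted to the orbifold setting. First I would normalize: replacing $f_n$ by $f_n/\|f_n\|_{L^2}$, I may assume $\|f_n\|_{L^2}=1$ and $R_S(f_n)\to\lambda_0(S)$, so by \eqref{rayq2} the sequence $(f_n)$ is bounded in $H_S$. By reflexivity of $H_S$, pass to a subsequence with $f_n\rightharpoonup\vf$ weakly in $H_S$; then in particular $f_n\rightharpoonup\vf$ weakly in $L^2(O)$, and lower semicontinuity of the norm gives $\|\vf\|_{H_S}\le\liminf\|f_n\|_{H_S}$, hence $R_S(\vf)\le\lambda_0(S)$ provided $\vf\ne0$, forcing $R_S(\vf)=\lambda_0(S)$ and, by \cref{eigen} (or rather its proof), $\vf$ an eigenfunction of $\bar S$ for $\lambda_0(S)$. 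So the whole game is to upgrade weak $L^2$-convergence to strong $L^2$-convergence and, simultaneously, to rule out $\vf=0$; these two are the same issue, namely that no mass escapes to infinity.

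The key step is a uniform no-escape-of-mass estimate, exploiting the gap $\lambda_0(S)<\lambda_{\ess}(S)$. Fix $\mu$ with $\lambda_0(S)<\mu<\lambda_{\ess}(S)$. By the definition \eqref{bespec} of $\lambda_{\ess}(S)$, there is a compact set $K\subseteq O$ with $\lambda_0(S,O\setminus K)\ge\mu$; enlarging $K$ to a compact domain with smooth boundary and choosing a cutoff, write $f_n=\chi f_n+(1-\chi)f_n$ where $\chi\in\Lip_c(O)$ equals $1$ near $K$. The tail piece $(1-\chi)f_n$ is supported off $K$, so its Rayleigh quotient with respect to $S$ is at least $\mu$; combined with the IMS-type localization formula
\begin{align*}
	\la Sf_n,f_n\ra = \la S(\chi f_n),\chi f_n\ra + \la S((1-\chi)f_n),(1-\chi)f_n\ra - \int_O (|\nabla\chi|^2+|\nabla(1-\chi)|^2)\,f_n^2,
\end{align*}
and using $|\nabla\chi|^2+|\nabla(1-\chi)|^2\le C$ supported in a compact set $K'$, one gets
$\lambda_0(S)+o(1) \ge \lambda_0(S)\|\chi f_n\|_{L^2}^2 + \mu\|(1-\chi)f_n\|_{L^2}^2 - C\|f_n\|_{L^2(K')}^2.$
Since $\|\chi f_n\|^2+\|(1-\chi)f_n\|^2=\|f_n\|^2_{L^2}=1$, this rearranges to $(\mu-\lambda_0(S))\|(1-\chi)f_n\|_{L^2}^2\le C\|f_n\|_{L^2(K')}^2+o(1)$, so the tail mass is controlled by the mass on the \emph{fixed compact} set $K'$. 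Now $f_n$ is bounded in $H^1_{loc}$ (by \cref{Lipschitz} and the $H_S$-bound, the $L^2$-norms of $\nabla f_n$ are locally bounded), so by Rellich on a relatively compact neighborhood of $K'$ — applied chart by chart, i.e.\ lifting to codomains as in \cref{Lipschitz} — a subsequence of $f_n$ converges strongly in $L^2(K')$, hence $\|f_n\|_{L^2(K')}\to\|\vf\|_{L^2(K')}$. Feeding this back, $\limsup_n\|(1-\chi)f_n\|_{L^2}^2\le \frac{C}{\mu-\lambda_0(S)}\|\vf\|_{L^2(K')}^2$; since $\mu$ (and hence $K,K',\chi,C$) was fixed, letting the support of $\chi$ grow shows there is no loss of mass at infinity: $\|\vf\|_{L^2}=\lim\|f_n\|_{L^2}=1$, and in fact $f_n\to\vf$ strongly in $L^2(O)$. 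In particular $\vf\ne0$, so by the first paragraph $\vf$ is an eigenfunction of $\bar S$ for $\lambda_0(S)$.

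The main obstacle is the localized Rellich compactness on the fixed compact set $K'$ in the orbifold setting: one must cover $K'$ by finitely many chart domains, lift via a partition of unity to the codomains $\hat U$ as in the proof of \cref{Lipschitz}, apply the ordinary Rellich theorem there, and descend. A secondary technical point is justifying the localization identity and the $H^1_{loc}$-bound for merely Lipschitz $f_n$ rather than smooth ones; this is handled exactly as in \cref{Lipschitz}, approximating each $f_n$ by the symmetrized mollifications $h_{n,k}$ and passing to the limit in $H_S$. Everything else is the routine weak-lower-semicontinuity argument, which I have already sketched.
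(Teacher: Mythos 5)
Your route is genuinely different from the paper's. The paper notes that the hypothesis $\lambda_0(S)<\lambda_{\ess}(S)$ makes $\lambda_0(S)$ an isolated eigenvalue of finite multiplicity, decomposes $f_n=g_n+h_n$ against the eigenspace $E$ and its orthogonal complement, and lets the spectral gap force $\|h_n\|_{L^2}\to0$ in three lines of algebra; your concentration-compactness/IMS argument avoids the spectral projection at the cost of localized Rayleigh-quotient estimates plus Rellich on compacts. The latter is the more robust strategy in settings without a clean spectral decomposition, but here it is strictly longer and does not buy extra generality.

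There is, moreover, a concrete error in your localization step. The displayed IMS-type identity is written for the arithmetic partition $\chi+(1-\chi)=1$, but the IMS identity requires a \emph{quadratic} partition $\chi_1^2+\chi_2^2=1$. With $\chi_1=\chi$, $\chi_2=1-\chi$, expanding $\sum_i\langle S(\chi_i f),\chi_i f\rangle$ produces a factor $\chi^2+(1-\chi)^2\ne1$ in front of $|\nabla f|^2+Vf^2$ together with a cross-term $\int f\langle\nabla f,\nabla(\chi^2+(1-\chi)^2)\rangle$ that does not vanish, so your displayed equality fails. Likewise $\|\chi f_n\|_{L^2}^2+\|(1-\chi)f_n\|_{L^2}^2\ne\|f_n\|_{L^2}^2$ in general, which breaks the rearrangement that follows. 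The fix is standard: pick $\psi$ smooth with $0\le\psi\le1$, $\psi=1$ near $K$, $\psi=0$ outside a larger compact set, and set $\chi_1=\psi/\sqrt{\psi^2+(1-\psi)^2}$, $\chi_2=(1-\psi)/\sqrt{\psi^2+(1-\psi)^2}$; then $\chi_1^2+\chi_2^2=1$, the IMS identity and the $L^2$-Pythagoras relation hold, and (keeping the transition region of fixed width as you enlarge the cutoff so that the Lipschitz constants, hence $C$, stay bounded) the remainder of your argument goes through. A secondary imprecision, which you partly flag yourself: the claim in your first paragraph that $R_S(\vf)\le\lambda_0(S)$ ``provided $\vf\ne0$'' does not follow from weak lower semicontinuity of $\|\cdot\|_{H_S}$ alone, since if $\|\vf\|_{L^2}<1$ the Rayleigh quotient may exceed $\lambda_0(S)$; what you actually need, and what your second paragraph is designed to establish, is $\|\vf\|_{L^2}=1$.
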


\begin{proof}
	We know from Lemma \ref{Lipschitz} that any compactly supported Lipschitz function
	can be approximated by compactly supported smooth functions in $H_{S}$.
	Hence, it suffices to prove the assertion for the case where the $f_{n} \in C^{\infty}_{c}(O)$.
	Since $\lambda_{0}(S) < \lambda_{\ess}(S)$,
	we have that $\lambda_{0}(S)$ is an isolated eigenvalue of $\bar{S}$ of finite multiplicity.
	Let $E$ be the corresponding eigenspace, and write $f_{n} = g_{n} + h_{n}$,
	where $g_{n} \in E$ and $h_{n}$ is $L^{2}$-perpendicular to $E$.
	From our assumption, it follows that $R_{S}(h_{n}) \geq \lambda_{0}(S) + c$ for some $c>0$.
	Since $E$ is one-dimensional, after passing to a subsequence if necessary,
	we may assume that $g_{n} \rightarrow \vf$ in $L^{2}(O)$ for some $\vf \in E$. 
	
	Given $\varepsilon > 0$, we have that $R_{S}(f_{n}) < \lambda_{0}(S) + \varepsilon$ for sufficiently large $n \in \mathbb{N}$. Using that $\langle \bar{S}h_{n} , g_{n} \rangle = 0$, we compute
	\begin{align*}
		(\lambda_{0}(S) + c)\|h_{n}\|_{L^{2}}^{2} &\le \langle \bar{S}h_{n} , h_{n} \rangle_{L^{2}} = \langle \bar{S}f_{n} , f_{n} \rangle_{L^{2}} - \langle \bar{S}g_{n} , g_{n} \rangle_{L^{2}} \\
		&\le (\lambda_{0}(S) + \varepsilon) \| f_{n} \|_{L^{2}}^{2} - \lambda_{0}(S) \| g_{n} \|_{L^{2}}^{2} \\
		&\le \lambda_{0}(S) \| h_{n} \|_{L^{2}}^{2} + \ve \| f_{n} \|_{L^{2}}^{2}
	\end{align*}
	for sufficiently large $n \in \mathbb{N}$.
	This shows that $h_{n} \rightarrow 0$ and hence that $f_{n} \rightarrow \vf$ in $L^{2}(O)$.
\end{proof}

\begin{prop}\label{comess}
The bottom of the essential spectrum of $S$ is given by
\[
	\lambda_{\ess}(S,O) = \sup\lambda_{0}(S,O\setminus K),
\]
where the supremum is taken over all compact subsets $K\subseteq O$.
\end{prop}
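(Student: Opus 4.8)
The assertion is the orbifold analogue of Persson's theorem (the decomposition principle for Schr\"odinger operators bounded from below), and the plan is to adapt the classical manifold proof; the only genuinely orbifold-specific input is the Rellich--Kondrachov compactness discussed at the end. Write $\Lambda=\sup_K\lambda_0(S,O\setminus K)$ for the quantity on the right, where $K$ runs over the compact subsets of $O$; by cofinality one may let $K$ run only over compact domains with smooth boundary, so that $O\setminus K$ is an open Riemannian suborbifold of $O$. By \cref{green} we have $\la Sf,f\ra_{L^2}=\int_O(|\nabla f|^2+Vf^2)=R_S(f)\|f\|_{L^2}^2$ for $f\in C^\infty_c(O)$, and by \eqref{bspec2}, applied to $O\setminus K$, the number $\lambda_0(S,O\setminus K)$ equals the infimum of $R_S$ over the non-zero elements of $C^\infty_c(O\setminus K)$, equivalently over the non-zero elements of the $H_S$-closure of $C^\infty_c(O\setminus K)$. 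The goal is to show that $\Lambda$ coincides with $\inf\sigma_{\ess}(\bar S)$, and I would prove the two inequalities separately.

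\emph{The inequality $\Lambda\ge\inf\sigma_{\ess}(\bar S)$.} Fix $\lambda<\inf\sigma_{\ess}(\bar S)$; it suffices to produce a compact $K$ with $\lambda_0(S,O\setminus K)\ge\lambda$, for then $\Lambda\ge\lambda$, and we may let $\lambda\uparrow\inf\sigma_{\ess}(\bar S)$. Below $\inf\sigma_{\ess}(\bar S)$ the spectrum of $\bar S$ consists of isolated eigenvalues of finite multiplicity which accumulate only at $\inf\sigma_{\ess}(\bar S)$, so the spectral projection $P$ of $\bar S$ for $(-\infty,\lambda]$ has finite rank; let $\vf_1,\dots,\vf_N\in L^2(O)$ be an orthonormal basis of its range and set $\mu=\inf\big(\sigma(\bar S)\cap(\lambda,\infty)\big)>\lambda$. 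Since $\sum_j|\vf_j|^2\in L^1(O)$, choose a compact $K$ with $\int_{O\setminus K}\sum_j|\vf_j|^2<\ve$. For a non-zero $f\in C^\infty_c(O\setminus K)$, decompose $f=f_0+f_1$ with $f_0=Pf$; since $\supp f\subseteq O\setminus K$, the Cauchy--Schwarz inequality gives $\|f_0\|_{L^2}^2=\sum_j|\la f,\vf_j\ra_{L^2}|^2\le\ve\|f\|_{L^2}^2$. Using that $\bar S$ commutes with $P$, that $\bar S\ge\lambda_0(S)$, and that $\bar S\ge\mu$ on the range of $1-P$, we obtain $\la Sf,f\ra_{L^2}\ge\lambda_0(S)\|f_0\|_{L^2}^2+\mu\|f_1\|_{L^2}^2=\mu\|f\|_{L^2}^2-(\mu-\lambda_0(S))\|f_0\|_{L^2}^2\ge\big(\mu-\ve(\mu-\lambda_0(S))\big)\|f\|_{L^2}^2$, which is $\ge\lambda\|f\|_{L^2}^2$ once $\ve$ is small enough. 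Hence $\lambda_0(S,O\setminus K)\ge\lambda$. This step is purely spectral-theoretic and goes exactly as for manifolds.

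\emph{The inequality $\Lambda\le\inf\sigma_{\ess}(\bar S)$.} This is non-trivial only when $\sigma_{\ess}(\bar S)\ne\emptyset$; then put $\lambda=\inf\sigma_{\ess}(\bar S)\in\sigma_{\ess}(\bar S)$, and the plan is to show $\lambda_0(S,O\setminus K)\le\lambda$ for every compact $K$. By Weyl's criterion there is a sequence $(u_n)$ in $D(\bar S)$ with $\|u_n\|_{L^2}=1$, $u_n\rightharpoonup0$ weakly in $L^2(O)$, and $\|(\bar S-\lambda)u_n\|_{L^2}\to0$; then $\la\bar Su_n,u_n\ra_{L^2}\to\lambda$ and $(u_n)$ is bounded in $H_S$. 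Given $K$, pick a precompact open $W\supseteq K$ and smooth $\chi,\psi\ge0$ on $O$ with $\chi^2+\psi^2\equiv1$, $\chi\equiv1$ on a neighbourhood of $K$, and $\chi\equiv0$ outside $W$. The key point is that $u_n\to0$ in $L^2(W)$; this follows from $u_n\rightharpoonup0$ in $L^2$, the $H_S$-boundedness of $(u_n)$, and the compactness of the restriction embedding $H_S|_W\hookrightarrow L^2(W)$. Then $\psi u_n\in H_S$ (multiplication by $\psi$ is bounded on $H_S$, by the IMS localization estimate together with the lower bound on $S$), its support lies in $O\setminus K$, and $\|\psi u_n\|_{L^2}^2=1-\|\chi u_n\|_{L^2}^2\to1$. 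Moreover the IMS identity $\la S(\chi u_n),\chi u_n\ra_{L^2}+\la S(\psi u_n),\psi u_n\ra_{L^2}=\la Su_n,u_n\ra_{L^2}+\int_O\big(|\nabla\chi|^2+|\nabla\psi|^2\big)u_n^2$ (obtained by integrating a pointwise identity, using $\chi^2+\psi^2\equiv1$), together with $\la S(\chi u_n),\chi u_n\ra_{L^2}\ge\beta\|\chi u_n\|_{L^2}^2$ and the fact that $\chi$, $|\nabla\chi|$, $|\nabla\psi|$ are all supported in $W$, where $\|u_n\|_{L^2}\to0$, yields $\limsup_n\la S(\psi u_n),\psi u_n\ra_{L^2}\le\lambda$, hence $\limsup_n R_S(\psi u_n)\le\lambda$. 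Since each $\psi u_n$ (for $n$ large) belongs to the $H_S$-closure of $C^\infty_c(O\setminus K)$, it follows that $\lambda_0(S,O\setminus K)\le\lambda$; taking the supremum over $K$ gives $\Lambda\le\lambda$.

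The step I expect to be the main obstacle is the one genuinely orbifold-specific ingredient used above: the compactness of the restriction embedding $H_S|_W\hookrightarrow L^2(W)$ for precompact open $W$. I would obtain it by covering $\overline W$ by finitely many charts $(U,\hat U,G,\pi)$, choosing a subordinate smooth partition of unity, pulling the resulting local pieces of an $H_S$-bounded sequence back to the manifold codomains $\hat U$ --- where, since the $H_S$-norm is comparable to the $H^1$-norm on precompact sets ($V$ being smooth, hence bounded there), the classical Rellich--Kondrachov theorem applies --- and reassembling; this is the same chart-by-chart device already used in the proof of \cref{Lipschitz}. With this compactness available, the remaining ingredients --- Weyl's criterion, the finite-rank spectral projection below $\inf\sigma_{\ess}(\bar S)$, and the IMS localization manipulations --- carry over essentially verbatim from the manifold case, for which we refer to the literature.
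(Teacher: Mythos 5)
Your proposal is correct, and it matches the paper's strategy for one of the two inequalities while taking a genuinely different route for the other. For the inequality $\lambda_0(S,O\setminus K)\le\inf\sigma_{\ess}(\bar S)$ you proceed, as the paper does, from a Weyl sequence for $\lambda=\inf\sigma_{\ess}(\bar S)$ and cut off near $K$; the difference is technical. The paper multiplies by $1-\chi$ and proves that $(1-\chi)f_n$ is again a Weyl sequence, which requires $\chi f_n\to 0$ in $H^1$; to get this they pass to the chart codomain $\hat U$, use elliptic estimates to obtain uniform $H^2(\hat U)$ bounds, and then apply Rellich. You instead use the IMS partition $\chi^2+\psi^2=1$, which needs only $u_n\to0$ in $L^2$ on a precompact set $W$, i.e.\ compactness of $H_S|_W\hookrightarrow L^2(W)$; this avoids any appeal to $H^2$ regularity and is mildly more elementary, though of course the same chart-by-chart reduction to classical Rellich is at work. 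For the converse inequality $\sup_K\lambda_0(S,O\setminus K)\ge\inf\sigma_{\ess}(\bar S)$ you use the textbook Persson device: for $\lambda<\inf\sigma_{\ess}(\bar S)$ the spectral projection of $(-\infty,\lambda]$ has finite rank, so one can choose $K$ absorbing most of the $L^2$-mass of the finitely many eigenfunctions and then bound the quadratic form from below on $C^\infty_c(O\setminus K)$. The paper argues the other way around: given $\ve>0$ it builds, along an exhaustion $K_n$, functions $f_n\in C^\infty_c(O\setminus K_n)$ with pairwise disjoint supports, unit $L^2$-norm, and Rayleigh quotient $\le\sup_K\lambda_0(S,O\setminus K)+\ve$, and then invokes the min-max principle: an infinite-dimensional subspace with uniformly bounded Rayleigh quotient forces essential spectrum at or below that bound. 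Both routes are standard; the paper's construction is arguably more hands-on (it produces the low-energy subspace explicitly), while yours leans on the structure of the discrete spectrum below $\inf\sigma_{\ess}(\bar S)$. A small technical point you should tidy: $\psi u_n$ need not have compact support, so you should either justify that it lies in the $H_S$-closure of $C^\infty_c(O\setminus K)$ (approximate $u_n$ by $g_m\in C^\infty_c(O)$ and use boundedness of multiplication by $\psi$ on $H_S$) or insert an additional large-radius cutoff; the same remark applies implicitly to the paper's $(1-\chi)f_n$, so this is a shared, routine step.
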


\begin{proof}
By Weyl's criterion, $\lambda\in\R$ belongs to the essential spectrum of $\bar S$
if and only if there exists a \emph{Weyl sequence} $(f_n)_{n\in\N}$ for $\lambda$
in the domain $D(\bar{S})$,
which means that $\|f_n\|_{L^2}\to1$, $f_n\rightharpoonup0$, and $(\bar S-\lambda)f_n\to0$ in $L^2(O)$.

Given such a sequence and a compact subset $K$ of $O$,
we want to show that there exists a cut off function $\chi\in C^\infty_c(O)$ such that,
after passing to a subsequence if necessary,
$((1-\chi)f_n)_{n\in\N}$ is a Weyl sequence for $\lambda$ with supports disjoint from $K$. 
	
It is evident that it suffices to prove this for any compact domain $K$
contained in a coordinate region $U$ of a chart $(U,\hat{U},G,\pi)$ of $O$.
Fix $\chi \in C^{\infty}_{c}(O)$ with $\chi = 1$ in a neighborhood of $K$ and $\supp\chi\subseteq U$.
From elliptic estimates (on $\hat{U}$),
it is not hard to see that $(\hat{\chi}\hat{f}_{n})_{n \in \mathbb{N}}$ is bounded in $H^{2}(\hat{U})$
and that $\hat{\chi}\hat{f}_{n} \rightharpoonup 0$ in $L^{2}(\hat{U})$,
where $\hat\chi=\chi\circ\pi$ and $\hat f_n=f_n\circ\pi$.
After passing to a subsequence if necessary,
this yields that $\hat\chi\hat f_n \rightharpoonup 0$ in $H^2(\hat{U})$.
Since the supports of these functions are contained in a compact subset of $\hat{U}$,
it follows from Rellich's lemma that $\hat\chi\hat f_n\to0$ in $H^1(\hat{U})$.
Therefore $\chi f_n\to0$ and $(\bar{S} - \lambda) (\chi f_n)\to0$ in $L^2(O)$.
We conclude that $((1-\chi)f_{n})_{n \in \mathbb{N}}$ is a Weyl sequence for $\lambda$
with supports disjoint from $K$.
In particular,
\begin{align*}
	\lambda_{\ess}(S,O) \ge \lambda_{0}(S,O\setminus K).
\end{align*}
To finish the proof, we may assume that $\lambda_\infty=\sup\lambda_{0}(S,O\setminus K)$ is finite.
Now for any $\ve>0$, we may choose an exhaustion of $O$ by compact subsets $K_n$
and a sequence of functions $f_n\in C^\infty_c(O\setminus K_n)$ 
with pairwise disjoint supports such that $\|f_n\|_{L^2}=1$
and $R_S(f_n)\le\lambda_n+\ve$, where $\lambda_n=\lambda_0(S,O\setminus K_n)$.
We see that the space of functions $f\in C^\infty_c(O)$ with Rayleigh quotient at most $\lambda_\infty+2\ve$
is infinite dimensional.
Hence $\lambda_{\ess}(S,O)\le\lambda_\infty$.
\end{proof}

The asymptotic Cheeger inequality 
\begin{align}\label{china}
	\lambda_{\ess}(O) \ge \frac{1}{4} h_{\ess}(O)^2
\end{align}
is an immediate consequence of \eqref{chine} and \cref{comess}.

Consider a positive $\vf \in C^{\infty}(O)$ satisfying $S \vf = \lambda \vf$ for some $\lambda \in \mathbb{R}$. Denote by $L^{2}_{\vf}(O)$ the $L^{2}$-space of $O$ with respect to the measure $\vf^{2} \dv$,
where $\dv$ is the measure induced from the Riemannian metric of $O$.
It is easy to see that the isometric isomorphism $m_{\vf} \colon L^{2}_{\vf}(O) \to L^{2}(O)$,
defined by $m_{\vf} f = f \vf$, intertwines $S - \lambda$ with the diffusion operator
\[
L = m_{\vf}^{-1}  (S - \lambda)  m_{\vf} = \Delta - 2 \nabla \ln \vf.
\]
The operator $L$ is called \emph{renormalization of $S$ with respect to $\vf$}.
The Rayleigh quotient of a non-zero $f \in C^{\infty}_{c}(O)$ is defined by
\[
	R_{L}(f) = \frac{ \langle L f , f \rangle_{L^{2}_{\vf}} }{\| f \|_{L^{2}_{\vf}}^{2}}
	= \frac{ \int_{O} | \nabla f |^{2} \vf^{2}}{\int_{O} f^{2} \vf^{2}}.
\]

\begin{lem}\label{renorm}
For any non-zero $f \in C^{\infty}_{c}(O)$, we have $R_{L}(f) = R_{S}(f \vf) - \lambda$.
\end{lem}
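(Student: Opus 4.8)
The plan is to reduce the identity to the pointwise expansion of $|\nabla(f\vf)|^2$ combined with a single integration by parts, and then to use the eigenvalue equation $S\vf=\lambda\vf$ to eliminate the term containing $\Delta\vf$. First I would note that, since $f\in C^\infty_c(O)$ and $\vf\in C^\infty(O)$, the product $f\vf$ lies in $C^\infty_c(O)$, so that $R_S(f\vf)$ is the quantity in \eqref{rayq}, and that the two denominators agree: $\int_O(f\vf)^2=\int_O f^2\vf^2=\|f\|_{L^2_{\vf}}^2$. Recalling from just before the statement that $R_L(f)=\big(\int_O|\nabla f|^2\vf^2\big)\big/\big(\int_O f^2\vf^2\big)$, it therefore suffices to prove the numerator identity
\[
	\int_O\big(|\nabla(f\vf)|^2+V(f\vf)^2\big) = \int_O|\nabla f|^2\vf^2 + \lambda\int_O f^2\vf^2 .
\]

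Next I would expand $|\nabla(f\vf)|^2=\vf^2|\nabla f|^2+2f\vf\la\nabla f,\nabla\vf\ra+f^2|\nabla\vf|^2$ and integrate over $O$. For the cross term I would use $\nabla(f^2\vf)=\vf\nabla(f^2)+f^2\nabla\vf$ to write $2f\vf\la\nabla f,\nabla\vf\ra=\la\nabla(f^2\vf),\nabla\vf\ra-f^2|\nabla\vf|^2$, and then apply \cref{green} on a compact domain $D$ with smooth boundary containing $\supp f$ (the boundary term vanishing since $f^2\vf$ is zero near $\partial D$) to get $\int_O\la\nabla(f^2\vf),\nabla\vf\ra=\int_O f^2\vf\,\Delta\vf$. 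The two occurrences of $f^2|\nabla\vf|^2$ then cancel, leaving $\int_O|\nabla(f\vf)|^2=\int_O\vf^2|\nabla f|^2+\int_O f^2\vf\,\Delta\vf$. Finally, substituting $\vf\,\Delta\vf=\vf\,(S\vf-V\vf)=\lambda\vf^2-V\vf^2$ yields the numerator identity above, and dividing by $\int_O f^2\vf^2$ gives $R_S(f\vf)=R_L(f)+\lambda$, that is, $R_L(f)=R_S(f\vf)-\lambda$.

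The computation is entirely routine; the only step needing a word of care is the integration by parts, but this is exactly \cref{green} with vanishing boundary term, and the existence of a compact domain $D$ with smooth boundary containing $\supp f$ follows by taking a regular sublevel set of a smooth exhaustion function on $O$, as discussed in \cref{susdom} (using \cref{comex} and Sard). Thus I do not anticipate any genuine obstacle.
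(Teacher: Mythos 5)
Your proof is correct. The paper states the lemma without proof, relying implicitly on the setup in the preceding paragraph: since $m_\vf\colon L^2_\vf(O)\to L^2(O)$ is an isometric isomorphism and $m_\vf^{-1}(S-\lambda)m_\vf=L$, one gets in one line
\[
R_L(f)=\frac{\langle Lf,f\rangle_{L^2_\vf}}{\|f\|^2_{L^2_\vf}}
=\frac{\langle (S-\lambda)(f\vf),f\vf\rangle_{L^2}}{\|f\vf\|^2_{L^2}}
=R_S(f\vf)-\lambda.
\]
Your argument instead verifies the identity directly at the level of quadratic forms: expand $|\nabla(f\vf)|^2$, integrate the cross term by parts via \cref{green}, and use $S\vf=\lambda\vf$. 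This is precisely the computation that underlies the displayed formula $R_L(f)=\int_O|\nabla f|^2\vf^2/\int_O f^2\vf^2$ and the operator intertwining in the first place, so the two routes are the same calculation organized differently; yours is more self-contained, the paper's implicit route buys brevity by appealing to facts already recorded. One small remark: you do not actually need to produce a compact domain with smooth boundary containing $\supp f$ via an exhaustion-and-Sard argument; since $f^2\vf$ has compact support, one can just apply \cref{green} on any sufficiently large metric ball (or cover $\supp f$ by finitely many chart domains and use a partition of unity), and the boundary term vanishes. The step is fine as written, only slightly more elaborate than necessary.
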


\section{Coverings of orbifolds}
\label{seccov}
A map $p\colon O'\to O$ between orbifolds with boundary is called a \emph{covering (of orbifolds)}
if, for each chart $a=(U,\hat U,G,\pi)$ of $O$ with simply connected domain $U$,
the preimage $p^{-1}(U)$ is the disjoint union of connected open subsets $U'$ of $O'$,
which belong to charts of $O'$ of the form $a'=(U',\hat U,G',\pi')$,
where $G'\subseteq G$ and $G'$ may depend on $a'$,
such that the diagram
\begin{equation}\label{comdia}
\begin{tikzcd}
	\hat U \arrow[r,"\pi'"] \arrow[d,"q"]
	&
	U' \arrow[r,"\cong"] \arrow[d,"p"]
	&
	G'\backslash\hat U \arrow[d]
	\\
	\hat U  \arrow[r,"\pi"]
	&
   	U \arrow[r,"\cong"]
	&
	G\backslash\hat U
\end{tikzcd}
\end{equation}
commutes,
where $q$ denotes the identification of the codomain $\hat U$ of $a'$ with the codomain $\hat U$ of $a$
and the right vertical arrow and each of the compositions of horizontal arrows
denote the natural projections.

In contrast to standard coverings,
the restrictions $p\colon U'\to U$ are, in general, not homeomorphisms,
but correspond to the projections $G'\backslash\hat U\to G\backslash\hat U$.
Nevertheless, charts $a$ and the respective open subset $U$ of $O$
will be called \emph{evenly covered} by the charts $a'$ and respective open sets $U'$ as above.
Conversely, $a'$ will be called a \emph{lift of $a$} or said \emph{to be above $a$},
and similarly for $U'$ and $U$.
We will also say that $U'$ is a \emph{local leaf of $p$ over $U$}.

Note that coverings are smooth.
An orbifold is said to be \emph{good} if it admits an orbifold covering by a manifold.

\begin{exa}
If $\Gamma\curvearrowright M$ is a properly discontinuous action
of a countable group $\Gamma$ on a manifold $M$ via diffeomorphisms,
then the space $O=\Gamma\backslash M$ of orbits is a good orbifold with $M$ as a covering space.
\end{exa}

A covering $\tilde p\colon\tilde O\to O$ of connected orbifolds with boundary is called \emph{universal}
if, for any covering $p\colon O'\to O$ with $O'$ connected,
and any points $x\in O$, $x'\in O'$, and $\tilde x\in\tilde O$ with $p(x')=\tilde p(\tilde x)=x$,
there is a covering $p'\colon\tilde O\to O'$ such that $\tilde p=p\circ p'$ and $p'(\tilde x)=x'$.

\begin{thm}[Thurston, Proposition 13.2.4 in \cite{Th}]
A connected orbifold with boundary has a universal cover,
and any such cover is unique up to isomorphism.
\end{thm}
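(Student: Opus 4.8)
The plan is to deduce the uniqueness statement formally from a lifting principle, and to construct the universal cover as a maximal connected covering by a Zorn's lemma argument, the coherence of the limiting construction being the main point. First I would record the following lifting principle: if $r\colon O_1\to O$ is a covering, $O'$ is connected, and $f_0,f_1\colon O'\to O_1$ are smooth maps with $rf_0=rf_1$ which, together with their induced local lifts in some pair of charts, agree at one point of $O'$, then $f_0=f_1$. Indeed, by the structure of a covering as in \eqref{comdia} the datum of such a lift near a point is that of a change of charts into a fixed codomain, and since changes of charts between connected pieces are unique up to composition with an element of the relevant finite group (see \cite{BH}, after \eqref{chartdia}), the set where $f_0$ and $f_1$ agree together with their chart data is open; the same local analysis shows it is closed, hence all of $O'$. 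Granting this, uniqueness of the universal cover is immediate: if $\tilde p_i\colon(\tilde O_i,\tilde x_i)\to(O,x)$, $i=0,1$, are two universal covers, the universal property produces basepoint-preserving coverings $\phi\colon\tilde O_0\to\tilde O_1$ and $\psi\colon\tilde O_1\to\tilde O_0$ with $\tilde p_1\phi=\tilde p_0$ and $\tilde p_0\psi=\tilde p_1$; then $\psi\phi$ and $\id_{\tilde O_0}$ are lifts of $\tilde p_0$ through $\tilde p_0$ agreeing at $\tilde x_0$, so $\psi\phi=\id$, and likewise $\phi\psi=\id$.

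\emph{Local normal form.} The construction rests on the following shape of small charts: given $y\in O$ and a chart $(U,\hat U,G,\pi)$ with $y\in U$, pick $x\in\pi^{-1}(y)$ and a $G_x$-invariant ball (or half-ball) $\tilde U\subseteq\hat U$ about $x$ so small that its $G$-translates are pairwise disjoint; then $(U',\tilde U,G_x,\pi|)$ with $U'=\pi(\tilde U)$ is a chart presenting $U'$ as $G_x\backslash\tilde U$ with $\tilde U$ connected and simply connected and $G_x$ finite. Over such a $U'$, the definition of a covering via \eqref{comdia} shows that every connected covering of $O$ restricts to $\Gamma'\backslash\tilde U$ for some subgroup $\Gamma'\subseteq G_x$, hence has at most $|G_x|$ sheets over $U'$; and along a decreasing chain of coverings the subgroups $\Gamma'$ of the finite group $G_x$ stabilize, by the descending chain condition.

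\emph{A maximal covering.} Fix a regular point $x_0\in\mathcal R_O$ and let $\mathcal C$ be the set of isomorphism classes of pointed connected coverings $(O',x')\to(O,x_0)$, ordered by declaring $(O_1,x_1)\succeq(O_2,x_2)$ when there is a basepoint-preserving covering $(O_1,x_1)\to(O_2,x_2)$ over $O$ (unique by the lifting principle). A chain in $\mathcal C$ admits an upper bound: take the inverse limit; over each normal-form neighborhood $U'$ its restriction is the stabilized value of the chain, a finite-sheeted covering of $U'$, so the inverse limit is a covering of $O$ by a second countable orbifold; and a standard argument (using path lifting, which is available by compactness of the interval together with the local finiteness just noted) shows it is connected. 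By Zorn's lemma, $\mathcal C$ has a maximal element $\tilde p\colon(\tilde O,\tilde x_0)\to(O,x_0)$.

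\emph{Universality, and the main obstacle.} The maximal $\tilde O$ has no nontrivial connected pointed covering: composing one with $\tilde p$ gives a connected covering of $O$ (composition of orbifold coverings is an orbifold covering, by a diagram chase with \eqref{comdia}), which lies in $\mathcal C$ and dominates $\tilde O$, hence is an isomorphism by maximality. Consequently, for any connected covering $r\colon(O',x')\to(O,x_0)$, pulling $r$ back along $\tilde p$ and passing to the connected component $Z$ through $(\tilde x_0,x')$ gives a connected covering $Z\to\tilde O$, which is therefore an isomorphism; composing its inverse with the projection $Z\to O'$ yields a covering $\tilde O\to O'$ over $O$ carrying $\tilde x_0$ to $x'$, so $\tilde p$ is universal. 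The main obstacle throughout is the local-to-global bookkeeping: proving the lifting/monodromy principle, checking that a chain in $\mathcal C$ has a limit that is a genuine second countable orbifold covering and is connected, and realizing the pullback $Z$ as an orbifold covering. Each of these reduces to the normal form $U'\cong G_x\backslash\tilde U$ and to the uniqueness of changes of charts up to a finite group; once those are in place the rest is routine, but arranging them to interact correctly with second countability and with the gluing is where the care lies.
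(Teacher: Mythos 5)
Your existence argument via Zorn's lemma contains a genuine gap. The key assertion in the ``Local normal form'' step --- that for a small chart $U'\cong G_x\backslash\tilde U$ every connected covering of $O$ restricts over $U'$ to a single $\Gamma'\backslash\tilde U$ with $\Gamma'\subseteq G_x$, hence has at most $|G_x|$ sheets over $U'$ --- is false. By the very definition of an orbifold covering used in the paper (\eqref{comdia}), the preimage $p^{-1}(U')$ is a \emph{disjoint union} of pieces of the form $G'_j\backslash\tilde U$, and the number of these pieces is unconstrained: already in the manifold case, $\mathbb R\to S^1$ has infinitely many sheets over every small arc. So the descending chain condition on subgroups of the finite group $G_x$ controls the local isotropy of each component, but says nothing about the number of components, and the claimed stabilization of a chain of coverings over $U'$ does not occur.

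This breaks the Zorn step. The inverse limit of an infinite chain of coverings is in general not a covering: for the tower $\cdots\to S^1\xrightarrow{z\mapsto z^2}S^1\xrightarrow{z\mapsto z^2}S^1$ the inverse limit is the dyadic solenoid, whose preimage of a small arc is $(\text{arc})\times(\text{Cantor set})$, not a disjoint union of arcs; nor is the solenoid second countable in a way that would make it an orbifold covering in the sense of the paper. Thus chains in $\mathcal C$ need not have upper bounds of the required kind, and Zorn's lemma does not apply. (Your uniqueness argument, via the open-and-closed agreement set together with uniqueness of chart changes up to the finite group, is essentially fine.) Note that the paper gives no proof of this statement --- it is quoted as Thurston's Proposition 13.2.4 --- and Thurston's construction proceeds by a path-lifting/developing-map argument rather than a maximality argument, precisely to avoid the inverse-limit pathology above. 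To repair your approach you would need to replace the Zorn step with a direct construction (e.g.\ elementary homotopy classes of paths from a regular basepoint, or gluing local universal covers of the normal-form charts along the groupoid of chart changes), after which the lifting principle you proved still delivers uniqueness.
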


For a covering $p\colon O'\to O$ of orbifolds with boundary,
we say that a diffeomorphism $\tau$ of $O'$ is a \emph{deck transformation} if $p\tau=p$.
We say that $p$ is \emph{normal}, if its group of deck transformations is transitive
on the fibers of $p$.
By definition, universal coverings of $O$ are normal.
Up to natural isomorphism,
the groups of deck transformations does not depend on the universal covering
and is called the \emph{fundamental group} of $O$, denoted by $\pi_1^{\orb}(O)$.

\subsection{Riemannian coverings}
\label{susrico}
A covering $p\colon O'\to O$ of Riemannian orbifolds with boundary is called \emph{Riemannian}
if $p$ is a local isometry.

\begin{lem}\label{liftex}
Let $p\colon O'\to O$ be a Riemannian covering of Riemannian orbifolds with boundary,
and assume that $O'$ is complete.
Let $c\colon I\to O$ be a minimizing geodesic, where $I=[a,b]$ or $I=[a,b)$.
Let $x'\in O'$ be a point with $p(x')=c(a)$.
Then there is a lift $c'\colon I\to O'$ with $c'(a)=x'$
such that $L(c'|_{[s,t]})=L(c|_{[s,t]})$ for all $s\le t$ in $I$.
Any such lift $c'$ of $c$ is a minimizing geodesic in $O'$.
\end{lem}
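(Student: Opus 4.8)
\emph{Plan.} The plan is to construct the lift first locally, by passing to the common codomains of charts and invoking \cref{hatlift}, then to globalize it by a continuation argument in which the completeness of $O'$ is used, and finally to show that \emph{any} lift with the asserted length property is automatically a minimizing geodesic. The last point also covers the uniqueness-type phrasing "any such lift".

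\emph{Local lifts.} Fix $t_0\in I$ and choose a chart $a_0=(U_0,\hat U_0,G_0,\pi_0)$ of $O$ with $c(t_0)\in U_0$ and $\hat U_0$ simply connected. Then the normal form \eqref{comdia} applies over $U_0$, so $p^{-1}(U_0)$ is a disjoint union of local leaves, each carrying a chart $(U',\hat U_0,G',\pi')$ with $G'\subseteq G_0$ above $a_0$. A point $z'$ above $c(t_0)$ lies in a unique such leaf $U'$; pick $\hat z\in\hat U_0$ with $\pi'(\hat z)=z'$, so $\pi_0(\hat z)=p(z')=c(t_0)$ by commutativity of \eqref{comdia}. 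For a subinterval $J\ni t_0$ of $I$ with $c(J)\subseteq U_0$, \cref{hatlift} gives a lift $\sigma\colon J\to\hat U_0$ of $c|_J$ along $\pi_0$ with $\sigma(t_0)=\hat z$ and $L(\sigma|_{[s,s']})=L(c|_{[s,s']})$ for all $s\le s'$ in $J$. By \eqref{comdia}, $\pi'\circ\sigma$ is then a lift of $c|_J$ along $p$ into $U'\subseteq O'$, it starts at $z'$, and it preserves the lengths of all subcurves, because lengths in the orbifolds $U_0$ and $U'$ are computed via the common codomain $\hat U_0$.

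\emph{Globalization.} Starting from $c'(a):=x'$, extend the lift successively: whenever $c'$ is defined on $[a,t]\subseteq I$ with the length property, the local step above (with $t_0=t$ and $z'=c'(t)$) extends it to $[a,t+\delta]$ for some $\delta>0$, still with the length property, lengths being additive over subdivisions. If some $t_\infty\in I$ occurs as the supremum of the parameters already covered but is not itself covered, then for $s<t<t_\infty$ we have
\begin{align*}
	d_{O'}(c'(s),c'(t))\le L(c'|_{[s,t]})=L(c|_{[s,t]}),
\end{align*}
so $c'(t)$ is Cauchy as $t\nearrow t_\infty$; since $O'$ is complete it converges to a point $x'_\infty$ with $p(x'_\infty)=c(t_\infty)$, and setting $c'(t_\infty):=x'_\infty$ extends $c'$ continuously over $t_\infty$, after which the local step applies again. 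Hence the set of parameters covered is nonempty, relatively open and relatively closed in $I$, so it is all of $I$ (for $I=[a,b)$ one need not reach $b$), and we obtain $c'\colon I\to O'$ with $c'(a)=x'$ and $L(c'|_{[s,t]})=L(c|_{[s,t]})$ for all $s\le t$ in $I$.

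\emph{The lift is a minimizing geodesic, and the main obstacle.} Let $c'$ now be an arbitrary lift of $c$ with $c'(a)=x'$ satisfying the length property. Since $p$ is a local isometry, every curve in $O'$ has the same length as its image, hence $p$ is distance non-increasing: $d_O(p(y),p(w))\le d_{O'}(y,w)$ for all $y,w\in O'$. Therefore, for $s\le t$ in $I$,
\begin{align*}
	d_{O'}(c'(s),c'(t))\ge d_O(c(s),c(t))=L(c|_{[s,t]})=L(c'|_{[s,t]})\ge d_{O'}(c'(s),c'(t)),
\end{align*}
the middle equality because $c$ is a minimizing geodesic. So $L(c'|_{[s,t]})=d_{O'}(c'(s),c'(t))$ for every subinterval, i.e.\ $c'$ is locally length minimizing; such a curve has local geodesic lifts, as recalled in \cref{suscut}, hence is a geodesic, and it is parametrized proportionally to arc length because $c$ is. Taking $[s,t]=I$ for $I=[a,b]$ shows $c'$ realizes $d_{O'}$ between its endpoints, and in general each $c'|_{[a,t]}$ does, so $c'$ is a minimizing geodesic in $O'$. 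The step needing most care is the local-lifting step: justifying the normal form \eqref{comdia} over a chart whose domain may contain singular points (hence need not be simply connected) but whose codomain is simply connected, and checking that the curve lifted in the common codomain via \cref{hatlift} descends through the prescribed point $z'$ to a genuine $p$-lift with all subcurve lengths unchanged. Granting this, the continuation argument (in which completeness of $O'$ is used only to close up limits of parameters) and the minimality argument are routine.
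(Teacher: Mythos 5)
Your proposal follows essentially the same route as the paper's proof: local lifting in chart codomains via \cref{hatlift}, globalization by a continuation/open--closed argument in which completeness is used, and then the observation that any length-preserving lift of a minimizer is itself a minimizer because $p$ is distance non-increasing. Two small points deserve comment. First, in the local step, \cref{hatlift} by itself only produces \emph{some} lift $\sigma$ in $\hat U_0$; it does not let you prescribe $\sigma(t_0)=\hat z$. You must first compose $\sigma$ with a suitable $g\in G_0$ so that $\pi'\circ g\circ\sigma$ passes through the prescribed point $z'=c'(t)$ (the paper does this explicitly). Relatedly, the even-covering normal form \eqref{comdia} in the paper's definition is required for charts with simply connected \emph{domain} $U$, not simply connected codomain $\hat U$, so your worry is phrased about the wrong object; singular points in $U$ are not an obstruction to $U$ being simply connected.

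Second, your closedness step hides a real subtlety that the paper addresses with Arzel\`a--Ascoli. Length-preserving lifts through singular points are not unique (e.g.\ the two lifts of $t\mapsto t$ from the cone point of $\R/\{\pm1\}$ to $\R$), so if one merely knows that for each $\tau_n\nearrow t_\infty$ \emph{some} lift on $[a,\tau_n]$ exists, these lifts need not be restrictions of one another, and your Cauchy estimate cannot be applied across them. Your argument does become correct if you work with a single fixed lift $c'$ and make its domain maximal, say by Zorn's lemma on the partial order of partial lifts starting at $x'$ ordered by extension: then the Cauchy/completeness argument rules out a half-open maximal domain and the local step rules out a closed maximal domain strictly inside $I$. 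That is a perfectly good alternative to the paper's Arzel\`a--Ascoli diagonal argument, and arguably more elementary, but as written the phrase ``the set of parameters covered'' leaves the needed maximality implicit; it should be spelled out.
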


\begin{proof}
We consider the case $I=[a,b]$ first.
We can assume that $a=0$ and that $c$ has unit speed.
Let $A$ be the set of $a\in[0,b]$ such that a corresponding lift exists for $c|_{[0,a]}$.
By the assumption on the length of subcurves, any such lift is of unit speed and minimizing
since $p$ does not increase distances and $c$ is minimizing.
Furthermore, $A$ is not empty since $0\in A$.

We show now that $A$ is closed.
Let $(a_n)$ be an increasing sequence in $A$ with limit $a$
and $c_n\colon[0,a_n]\to O'$ be lifts of $c$ as asserted.
Then for $m\ge n$, $c_m$ is also such a lift on $[0,a_n]$.
Since $O'$ is complete, we may apply the Arzela-Ascoli theorem
and get a subsequence of the $c_m|_{[0,a_n]}$ which converges to a corresponding lift of $c$ on $[0,a_n]$.
We apply this argument again, but now to the subsequence and for $a_m>a_n$.
We get a subsequence of the subsequence of the $c_k|_{[0,a_m]}$ which converges,
and the limit will coincide with the previous limit on $[0,a_n]$.
Iterating this argument, we get a lift of $c$ on $[0,a)$.
By the completeness of $O'$ it can be extended to $[0,a]$, and hence $a\in A$.
Therefore $A$ is closed.

Suppose now that $a\in A$ with $a<b$,
and let $c'\colon[0,a]\to O'$  be a lift of $c|_{[0,a]}$.
Let $(U,\hat U,G,\pi)$ be an evenly covered chart of $O$ with $c(a)\in U$
and $(U',\hat U,G',\pi')$ be the associated chart of $O'$ with $c'(a)\in U'$.
By \cref{hatlift}, there is an $\ve>0$ and a lift $\sigma$ to $\hat U$ of $c|_{[a,a+\ve]}$
such that $L(\sigma)=L(c|_{[a,a+\ve]})$.
Evidently, there exists $g \in G$ with $(\pi' \circ g \circ \sigma) (a) = c'(a)$.
Extending $c'$ to $[a,a+\ve]$ by $\pi'\circ g \circ\sigma$,
we obtain a lift $c'$ of $c$ on $[0,a+\ve]$ such that $L(c')=L(c|_{[0,a+\ve]})$.
As we observed above,
$c'$ is minimizing since $p$ does not increase distances and $c$ is minimizing.

The assertion in the case where $I=[a,b)$ is now by reduction to the case $I=[a,b_n]$,
where the sequence of $b_n$ increases to $b$,
and applying the Arzela-Ascoli theorem as above.  
\end{proof}

The next result is an immediate consequence of \cref{hori}.\ref{hr2}.

\begin{prop}\label{comcov}
For a covering $p\colon O'\to O$ of connected orbifolds with boundary,
a Riemannian metric on $O$ is complete if and only if the lifted Riemannian metric on $O'$ is complete.  
\end{prop}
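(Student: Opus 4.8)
The plan is to deduce the statement from the Hopf--Rinow type criterion \cref{hori}.\ref{hr2}. By hypothesis the metric on $O'$ is the pullback under $p$ of the metric on $O$, so $p$ is a Riemannian covering and \cref{liftex} and \cref{hatlift} are available. By \cref{hori}.\ref{hr2}, a connected Riemannian orbifold is complete if and only if every minimizing geodesic defined on $[0,1)$ extends to $[0,1]$; thus it is enough to transfer this extension property between $O$ and $O'$ in both directions.

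For ``$O'$ complete $\Rightarrow$ $O$ complete'', let $c\colon[0,1)\to O$ be a minimizing geodesic. Since $O'$ is complete, \cref{liftex} yields a lift $c'\colon[0,1)\to O'$ with $L(c'|_{[s,t]})=L(c|_{[s,t]})$ for all $s\le t$, and $c'$ is itself a minimizing geodesic of $O'$. By \cref{hori}.\ref{hr2}, $c'$ extends to $[0,1]$, hence so does $c=p\circ c'$, and \cref{hori}.\ref{hr2} now gives that $O$ is complete.

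For ``$O$ complete $\Rightarrow$ $O'$ complete'', let $c'\colon[0,1)\to O'$ be a minimizing geodesic, which we take to be of unit speed. Then $c:=p\circ c'$ is a geodesic of $O$ of length $1$. As $t\to 1$, the points $c'(t)$ form a Cauchy family in $O'$, hence the points $c(t)$ form a Cauchy family in $O$; since $O$ is complete, they converge to some $y\in O$, so $c$ extends continuously to $[0,1]$ with $c(1)=y$. It remains to show that $c'$ also extends continuously at $1$. Choose an evenly covered chart $(U,\hat U,G,\pi)$ of $O$ with simply connected $U\ni y$ and $\delta\in(0,1)$ with $c([1-\delta,1])\subseteq U$. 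The connected set $c'([1-\delta,1))$ then lies in a single local leaf $U'$ over $U$, belonging to a chart $(U',\hat U,G',\pi')$ with $G'\subseteq G$ and $p\pi'=\pi$. By \cref{hatlift}, the rectifiable curve $c|_{[1-\delta,1]}$ has a length-preserving lift $\sigma\colon[1-\delta,1]\to\hat U$ with $\pi\sigma=c|_{[1-\delta,1]}$; being a length-preserving lift of a geodesic, $\sigma$ is a geodesic of the manifold $\hat U$, defined in particular at $1$. Applying \cref{hatlift} on closed subintervals and patching, one obtains a length-preserving lift $\tilde c'\colon[1-\delta,1)\to\hat U$ of $c'|_{[1-\delta,1)}$ with respect to $\pi'$, so that $\pi\tilde c'=p\pi'\tilde c'=c|_{[1-\delta,1)}$. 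As two lifts of the connected curve $c|_{[1-\delta,1)}$ to $\hat U$, $\tilde c'$ and $\sigma|_{[1-\delta,1)}$ differ by an element $g\in G$ (cf.\ the uniqueness of changes of charts up to the group action). Since $g\sigma$ is defined on the closed interval $[1-\delta,1]$, so is $\tilde c'$, and therefore $c'=\pi'\tilde c'$ extends continuously to $[1-\delta,1]$. By \cref{hori}.\ref{hr2}, $O'$ is complete.

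The genuinely orbifold-specific point, and the one I expect to require the most care, is the last step above: over an evenly covered chart the covering restriction $U'\to U$ is in general not a homeomorphism but the quotient map $G'\backslash\hat U\to G\backslash\hat U$, so $c'$ cannot be compared with a lift of $c$ directly inside $O'$; one must pass to the common codomain $\hat U$, where lifts of a connected curve are unique up to the action of the finite group $G$, and transport the extendability of the lift of $c$ to that of $c'$. The remaining ingredients --- that projections and local lifts of geodesics are again geodesics, and the Cauchy-sequence and length bookkeeping --- are routine.
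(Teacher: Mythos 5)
Your overall structure matches the paper's proof: both directions rest on the extension criterion \cref{hori}.\ref{hr2}; the direction ``$O'$ complete $\Rightarrow$ $O$ complete'' uses \cref{liftex} exactly as the paper does; and for the converse, both you and the paper invoke an evenly covered chart about $c(1)$ and the local leaf containing the tail of $c'$. The paper compresses this last step into one sentence; you unfold it by passing to the common codomain $\hat U$, which is a reasonable way to make the argument concrete.

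The one step I would press you on is the claim that $\tilde c'$ and $\sigma|_{[1-\delta,1)}$, as two lifts of the connected curve $c|_{[1-\delta,1)}$ to $\hat U$, differ by an element of $G$, which you support by appealing to the uniqueness of changes of charts up to the group action. That uniqueness statement is about local diffeomorphisms between codomains, not about lifts of curves, and its naive transfer to curves is simply false: take $\hat U=\R$, $G=\{\pm1\}$, and $c(t)=|t|$ on $[-1,1]$; then $t\mapsto t$ and $t\mapsto|t|$ are both continuous lifts of $c$ but do not differ by a fixed element of $G$. What actually rescues your argument is the very feature you already noted, namely that $\sigma$ and $\tilde c'$ are geodesics of the manifold $\hat U$ (each is a length-preserving lift of a geodesic, and since $\pi$ does not increase distances such a lift is again locally length-minimizing). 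At any fixed $t_0$ the points $\sigma(t_0)$ and $\tilde c'(t_0)$ lie in the same $G$-orbit, and because both velocity vectors project to $\dot c(t_0)$ they represent the same class in $T_{\sigma(t_0)}\hat U/G_{\sigma(t_0)}$; hence there is $g\in G$ with $g\sigma(t_0)=\tilde c'(t_0)$ and with $dg$ carrying $\dot\sigma(t_0)$ to the velocity of $\tilde c'$ at $t_0$. Uniqueness of geodesics in the manifold $\hat U$ with prescribed initial data then gives $g\sigma=\tilde c'$ on the whole interval. With this replacement your proof is complete and is in substance the paper's argument.
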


\begin{proof}
Suppose first that $O'$ is complete, and let $c\colon[0,1)\to O$ be a minimizing geodesic.
Let $c'\colon[0,1)\to O'$ as in \cref{liftex}.
Then \cref{hori}.\ref{hr2} implies that $c'$ can be extended to $[0,1]$,
hence the composition of the extension with $p$ is an extension of $c$ to $[0,1]$.
Hence $O$ is complete.

Suppose now that $O$ is complete and let $c'\colon[0,1)\to O'$ be a minimizing geodesic.
Then $c=p\circ c'$ can be extended to $[0,1]$, by the completeness of $O$.
Then using an evenly covered chart about $c(1)$ and the leaf above it
containing the image of $c'|_{(1-\ve,1)}$ for some $\ve>0$,
we see that $c'$ can be extended to $[0,1]$.
Hence $O'$ is complete.
\end{proof}

\begin{prop}\label{distcov}
Let $p\colon O'\to O$ be a Riemannian covering
of complete and connected Riemannian orbifolds with boundary.
Let $x_0,x_1\in O$ and $y_0\in p^{-1}(x_0)$.
Then there is a point $y_1\in p^{-1}(x_1)$ such that $d(y_0,y_1)=d(x_0,x_1)$.
\end{prop}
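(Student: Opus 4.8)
The plan is to combine the Hopf--Rinow type statement \cref{hori} with the length-preserving lifting property for Riemannian coverings provided by \cref{liftex}. First, since $O$ is complete and connected, the last assertion of \cref{hori} yields a minimizing geodesic $c\colon[0,1]\to O$ with $c(0)=x_0$ and $c(1)=x_1$ (reparametrizing proportionally to arc length if necessary); in particular $L(c)=d(x_0,x_1)$. Since $O'$ is complete as well, \cref{liftex} applies to $c$ and to the given point $y_0\in p^{-1}(x_0)=p^{-1}(c(0))$: there is a lift $c'\colon[0,1]\to O'$ with $c'(0)=y_0$ and $L(c'|_{[s,t]})=L(c|_{[s,t]})$ for all $0\le s\le t\le1$, and moreover $c'$ is itself a minimizing geodesic in $O'$.

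It would then remain to read off the conclusion. Set $y_1:=c'(1)$. Because $c'$ is a lift of $c$ we have $p\circ c'=c$, so $p(y_1)=c(1)=x_1$ and hence $y_1\in p^{-1}(x_1)$. Since $c'$ is minimizing, $d(y_0,y_1)=L(c')$, and the length-preserving property gives $L(c')=L(c)=d(x_0,x_1)$; therefore $d(y_0,y_1)=d(x_0,x_1)$. Alternatively, one obtains $d(y_0,y_1)\le L(c')=d(x_0,x_1)$ directly, while the reverse inequality $d(x_0,x_1)\le d(y_0,y_1)$ holds because $p$, being a local isometry, does not increase distances: any rectifiable curve in $O'$ from $y_0$ to $y_1$ projects under $p$ to a rectifiable curve in $O$ from $x_0$ to $x_1$ of the same length.

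Since \cref{liftex} already carries the entire analytic content, I expect no real obstacle here; the argument is essentially an assembly of facts already in hand. The only things to watch are bookkeeping: checking that the hypotheses of \cref{liftex} are satisfied ($O'$ complete, given; $c$ a minimizing geodesic on a compact interval, supplied by \cref{hori}) and noting that the endpoint $c'(1)$ automatically lies in the fibre over $x_1$ because $c'$ lifts $c$.
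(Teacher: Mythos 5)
Your proposal is correct and follows essentially the same route as the paper's proof: produce a minimizing geodesic $c$ from $x_0$ to $x_1$ via completeness, lift it length-preservingly to a minimizing geodesic $c'$ starting at $y_0$ using \cref{liftex}, and set $y_1=c'(1)$. The extra remarks (the reverse inequality from $p$ being distance non-increasing) are fine but not needed once one knows $c'$ is minimizing.
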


Note that $d(y_0,y_1)\ge d(x_0,x_1)$ for all $y_1\in p^{-1}(x_1)$
since $p$ does not increase distances.

\begin{proof}[Proof of \cref{distcov}]
Let $c\colon[0,1]\to O_0$ be a minimizing geodesic from $x_0$ to $x_1$
and $c'$ be a lift of $c$ to $O'$ with $c'(0)=y_0$.
Then $c'$ is minimizing and $p(c'(1))=x_1$.
Hence $y_1=c'(1)\in p^{-1}(x_1)$ and $d(y_0,y_1)=d(x_0,x_1)$.
\end{proof}

\subsection{Dirichlet domains}
\label{susdido}
Consider now a Riemannian covering $p\colon O'\to O$ of complete Riemannian orbifolds without boundary, where $O$ is connected.

\begin{lem}\label{nullnull}
If $N\subseteq O$ has measure zero, then also $p^{-1}(N)\subseteq O'$.
\end{lem}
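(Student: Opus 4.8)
The plan is to reduce the global statement to a local one, using the definition of an orbifold covering and second countability. First I would cover $O$ by the domains of countably many charts $(U_i,\hat U_i,G_i,\pi_i)$ with each $U_i$ simply connected, hence evenly covered by $p$; since $O$ is second countable such a countable cover exists. It suffices to show that $p^{-1}(N\cap U_i)$ has measure zero for each $i$, because $p^{-1}(N)=\bigcup_i p^{-1}(N\cap U_i)$ and a countable union of null sets is null. Fix such an $i$ and drop the subscript.

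Next I would pass to the codomain. By \eqref{mba}, applied to the chart $(U,\hat U,G,\pi)$, the set $N\cap U$ has measure zero in $O$ if and only if $\pi^{-1}(N\cap U)$ has Riemannian measure zero in $\hat U$. Now by the defining diagram \eqref{comdia} of an orbifold covering, $p^{-1}(U)$ is a disjoint union of local leaves $U'$, each fitting into a chart $(U',\hat U,G',\pi')$ with $G'\subseteq G$, and with $\pi=q'\circ\pi'$ where $q'\colon G'\backslash\hat U\to G\backslash\hat U$ is the natural projection; here we identify the codomain of the lifted chart with $\hat U$. Crucially, $\pi'$ and $\pi$ are realized by the \emph{same} map $\hat U\to U$ up to the identifications, so $(\pi')^{-1}(N\cap U')=\pi^{-1}(N\cap U)$ inside $\hat U$ (more precisely, these preimages coincide as subsets of $\hat U$ under $q$). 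Since $\pi^{-1}(N\cap U)$ has measure zero in $\hat U$, so does $(\pi')^{-1}(N\cap U')$, and then \eqref{mba} applied to the chart $(U',\hat U,G',\pi')$ gives that $N\cap U'$ has measure zero in $O'$. Finally, $p^{-1}(N\cap U)=\bigcup_{U'} (N\cap U')$ is a countable (again by second countability) union of null sets in $O'$, hence null.

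The main point to get right — and the only genuine subtlety — is the bookkeeping in the second paragraph: one must be careful that the codomain of the lifted chart is genuinely identified with $\hat U$ via the map $q$ in \eqref{comdia}, so that the two projections $\pi$ and $\pi'$ have literally the same source manifold $\hat U$ and the preimage $(\pi')^{-1}(\cdot)$ can be compared directly with $\pi^{-1}(\cdot)$; everything else is a routine countability argument combined with the change-of-variables identity \eqref{mba}. One should also note that the number of leaves $U'$ over a given $U$ is at most countable, which follows since $O'$ is second countable.
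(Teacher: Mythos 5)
Your proof is correct. The paper states Lemma~\ref{nullnull} without any proof (it is treated as routine), so there is no argument in the text to compare against; your reduction to evenly covered charts with simply connected domains, the transfer of the measure-zero condition to the common codomain $\hat U$ via \eqref{mba}, and the two countability arguments (countably many chart domains by second countability of $O$, countably many pairwise-disjoint leaves over each by second countability of $O'$) are exactly the natural argument and contain no gaps. The only blemishes are notational: in the final line $p^{-1}(N\cap U)=\bigcup_{U'}(N\cap U')$ should read $p^{-1}(N\cap U)=\bigcup_{U'}\bigl(p^{-1}(N)\cap U'\bigr)$, and similarly $(\pi')^{-1}(N\cap U')$ should be $(\pi')^{-1}\bigl(p^{-1}(N)\cap U'\bigr)$, since $N\subseteq O$ while $U'\subseteq O'$; the identity you actually use is $(\pi')^{-1}\bigl(p^{-1}(N)\cap U'\bigr)=q^{-1}\bigl(\pi^{-1}(N\cap U)\bigr)$, which follows from the commutativity $p\circ\pi'=\pi\circ q$ in \eqref{comdia}, and $q$ is a (volume-preserving) identification, so the measure-zero conclusion holds.
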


Fix $x\in O$.
For $y\in p^{-1}(x)$, the \emph{Dirichlet domain} of $p$ centered at $y$ is defined to be
\begin{align}\label{dirdom}
	D_y = \{ z \in O' \mid \text{$d(z,y)\le d(z,y')$ for any $y'\in p^{-1}(x)$} \}.
\end{align}
By \cref{distcov}, $d(y,z)=d(x,p(z))$ for any $z\in D_y$.

\begin{prop}\label{dirdom2}
If $x\in\mathcal R_{O}$ and $y\in p^{-1}(x)$, then
\begin{enumerate}
\item\label{dd1}
$y\in\mathcal R_{O'}$;
\item\label{dd2}
$\partial D_y = \{ z \in O' \mid \text{$d(z,y)=d(z,y')$ for some $y'\ne y$ in $p^{-1}(x)$} \}$;
\item\label{dd3}
$\partial D_y \subseteq p^{-1}(C(x))$ and $\inte(D_y)\subseteq\mathcal R_{O'}$;
\item\label{dd4}
$|D_y\cap p^{-1}(C(x))|_m=0$;
\item\label{dd5}
$p\colon D_y\setminus p^{-1}(C(x))\to O\setminus C(x)$ is an isometry;
\item\label{dd6}
for any integrable function $f$ on $O$, $f\circ p$ is integrable on $D_y$ and
\begin{align*}
	\int_{D_y} f\circ p = \int_{O} f.
\end{align*}
\end{enumerate}
\end{prop}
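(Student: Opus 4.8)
The plan is to establish the six assertions more or less in the order listed, using the completeness of $O'$ (via \cref{distcov} and \cref{liftex}) together with \cref{cutloc} and \cref{nullnull}. First, for \eqref{dd1}: a Riemannian covering is a local isometry, so near $y$ the orbifold $O'$ looks like a quotient of a neighborhood of $y$ in $O'$'s chart, and since $p$ maps $y$ to the regular point $x$, the local model over a neighborhood of $x$ is a trivial (manifold) chart; the charts above it in \eqref{comdia} have groups $G'\subseteq G=\{e\}$, hence $y$ is regular. For \eqref{dd2}: the inclusion $\supseteq$ is clear from the definition of $D_y$ since a point equidistant to $y$ and some $y'\neq y$ cannot lie in the interior. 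For $\subseteq$, given $z\in\partial D_y$, pick $z_n\to z$ with $z_n\notin D_y$, so $d(z_n,y'_n)<d(z_n,y)$ for some $y'_n\neq y$; since the fiber $p^{-1}(x)$ is discrete and closed, and the $y'_n$ stay within a bounded distance of $z$, only finitely many values of $y'_n$ occur, so after passing to a subsequence $y'_n\equiv y'$ and $d(z,y')\le d(z,y)$; combined with $z\in D_y$ this gives equality.

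For \eqref{dd3}: if $z\in\partial D_y$, by \eqref{dd2} there are $y'\neq y$ in the fiber with $d(z,y)=d(z,y')=:\ell$, and by \cref{distcov}/\cref{liftex} each equality is realized by a minimizing lift of a minimizing geodesic from $x$ to $p(z)$; projecting gives two (a priori possibly equal) minimizing geodesics from $x$ to $p(z)$ of length $\ell=d(x,p(z))$. These two geodesics are genuinely distinct, because two minimizing lifts starting at $y$ and $y'\neq y$ cannot agree at their common endpoint (a lift is determined by its starting point, by uniqueness of lifts through an evenly covered chart). By \cref{cutloc}.\ref{cl1}, a point admitting two distinct minimal geodesics from $x$ lies in $C(x)$, so $p(z)\in C(x)$, i.e. $\partial D_y\subseteq p^{-1}(C(x))$. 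For the second part of \eqref{dd3}: if $z\in\inte(D_y)$ then $z\notin\partial D_y$, so $p(z)\notin C(x)$; by \cref{cutloc}.\ref{cl3}, applied with the regular base point $x$, $\mathcal S_O\subseteq C(x)$, hence $p(z)\in\mathcal R_O$; since $p$ is a local isometry and $p(z)$ is regular, the argument of \eqref{dd1} shows $z\in\mathcal R_{O'}$. Assertion \eqref{dd4} is immediate: $|C(x)|_m=0$ by \cref{cutloc}.\ref{cl2}, hence $|p^{-1}(C(x))|_m=0$ by \cref{nullnull}, so in particular $|D_y\cap p^{-1}(C(x))|_m=0$.

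For \eqref{dd5}: restricted to $O\setminus C(x)$, every point has a unique minimal geodesic to $x$ (\cref{cutloc}.\ref{cl1}), and by \cref{distcov} exactly one lift starting at $y$ realizes the fiber distance; this sets up the inverse map $w\mapsto$ (endpoint of the lift to $y$ of the minimal geodesic from $x$ to $w$) from $O\setminus C(x)$ into $D_y\setminus p^{-1}(C(x))$, continuous by \cref{cutloc}.\ref{cl0} and a local-isometry argument, and it is a two-sided inverse to $p$ on these sets; since $p$ is a local isometry, it is then a global isometry there. Finally \eqref{dd6} follows from \eqref{dd4} and \eqref{dd5}: $f\circ p$ on $D_y$ agrees almost everywhere (off the measure-zero set $D_y\cap p^{-1}(C(x))$) with the pullback under the isometry of \eqref{dd5}, and $|C(x)|_m=0$, so $\int_{D_y}f\circ p=\int_{D_y\setminus p^{-1}(C(x))}f\circ p=\int_{O\setminus C(x)}f=\int_O f$, using \cref{nullnull} again and the change-of-variables formula for an isometry of Riemannian orbifolds, which reduces to \eqref{mba} chart by chart.

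The main obstacle I expect is \eqref{dd5}: verifying that the "exponential-type" map $O\setminus C(x)\to D_y\setminus p^{-1}(C(x))$ is well-defined, continuous, and globally inverse to $p$ requires patching the local-isometry property of $p$ along minimizing geodesics and invoking the continuous dependence of cut times and of lifts on the base point; one must be careful that the lifts produced by \cref{liftex} depend continuously on the endpoint, and that the image genuinely lands in the open Dirichlet domain (strict inequality $d(z,y)<d(z,y')$ for $y'\neq y$, which is exactly $p(z)\notin C(x)$ combined with uniqueness of the minimal lift). The remaining items are then bookkeeping built on \cref{cutloc}, \cref{distcov}, \cref{liftex}, and \cref{nullnull}.
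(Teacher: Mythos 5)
Your overall plan tracks the paper's proof closely on items \eqref{dd1}, \eqref{dd2}, \eqref{dd4}, \eqref{dd5}, \eqref{dd6}, with \eqref{dd5} elaborated via an explicit inverse map where the paper simply checks injectivity and surjectivity of $p$ restricted to $D_y\setminus p^{-1}(C(x))$. The genuine gap is in your argument for the second half of \eqref{dd3}, namely $\inte(D_y)\subseteq\mathcal R_{O'}$. You argue that $z\in\inte(D_y)$ implies $z\notin\partial D_y$ implies $p(z)\notin C(x)$. But the first half of \eqref{dd3} gives only $\partial D_y\subseteq p^{-1}(C(x))$, i.e.\ $z\in\partial D_y\Rightarrow p(z)\in C(x)$; what you are using is its converse, which does not follow. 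Indeed the implication you invoke is false in general: for the identity covering $p=\id\colon O\to O$ one has $\inte(D_y)=O$, which meets $C(x)$ as soon as $C(x)\neq\emptyset$. The paper instead derives $\inte(D_y)\subseteq\mathcal R_{O'}$ directly from \cref{borz}, i.e.\ a minimal geodesic emanating from the regular point $y$ has regular interior, arguing inside $O'$ with base point $y$ rather than passing through $C(x)$ in $O$.

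A secondary issue: in the first half of \eqref{dd3}, to see that the two projected minimal geodesics from $x$ to $p(z)$ are distinct you invoke that ``a lift is determined by its starting point.'' What you actually need is that a lift is determined by its \emph{endpoint} $z$, which holds only when $z$ is regular; if $p(z)$ is singular the two lifts through $y$ and $y'$ can a priori project to the same geodesic. The paper treats the singular case separately (there $p(z)\in\mathcal S_O\subseteq C(x)$ already by \cref{cutloc}.\ref{cl3}) and reserves the two-geodesics argument for regular $p(z)$; you should make the same case distinction.
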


\begin{proof}
\eqref{dd1} is clear from the definition of coverings of orbifolds.

\eqref{dd2}
Let $z\in D_y$ be such that there is a point $y'\ne y$ in $p^{-1}(x)$ with $d(z,y')=d(z,y)$.
Now there is a minimal geodesic from $y'$ to $z$, by the completeness of $O_1$.
Hence any neighborhood of $z$ contains points which are strictly closer to $y'$ than to $y$.
This shows that the given set belongs to $\partial D_y$.
The converse direction is obvious.

\eqref{dd3}
Let $z\in\partial D_y$ and $y'\in p^{-1}(x)$ be a point with $d(z,y')=d(z,y)$.
If $p(z)$ is singular, then we know from \cref{cutloc} that $p(z) \in C(x)$. If $p(z)$ is regular, consider minimizing geodesics from $y$ to $z$ and $y'$ to $z$.
Since their velocity vectors at $z$ are different,
their projections to $O$ are two different geodesics from $x$ to $p(z)$.
By \cref{distcov}, both are minimal, and hence $p(z)\in C(x)$.
The second assertion follows immediately
from \cref{borz} since $y\in\mathcal R_{O'}$.

\eqref{dd4}
is clear from \cref{cutloc} and \cref{nullnull}.

\eqref{dd5}
We only need to check
that $p\colon D_y\setminus p^{-1}(C(x))\to O\setminus C(x)$ is bijective.
Let $z\ne z'$ be points in $D_y$, and suppose that $p(z)=p(z')$.
Then $p$ maps minimal geodesics from $y$ to $z$ and $z'$ to different geodesics
from $x$ to $u=p(z)=p(z')$.
By \cref{distcov}, they are minimal, and hence $u\in C(x)$.
Therefore $p$ is injective on $D_y\setminus p^{-1}(C(x))$.

Given $u \in O$, let $c \colon [0,1] \to O$ be a minimizing geodesic from $x$ to $u$. Then the lift $c'$ of $c$ starting at $y$ is a minimizing geodesic. Since $p$ does not increase distances, it follows that $d(y,c'(1)) = d(x,u) \leq d(y',c'(1))$ for any $y' \in p^{-1}(x)$, which means that $c'(1) \in D_{y}$. We conclude that $p \colon D_{y} \to O$ is surjective, and so is $p\colon D_y\setminus p^{-1}(C(x))\to O\setminus C(x)$.

\eqref{dd6} is clear from \eqref{dd4} and \eqref{dd5}.
\end{proof}

\subsection{Action of the fundamental group on the fiber}
\label{setup}

Let $p \colon O_1 \to O_0$ be a Riemannian covering of orbifolds with boundary, where $O_0$ is connected.
Consider a (connected) component $O_1'$ of $O_1$.
Then the universal covering $p_0\colon\tilde O\to O_0$ of $O_0$ factors through $O_1'$,
\begin{equation}\label{uncodia}
\begin{tikzcd}
	\tilde O \arrow[r,"p_1'"] \arrow[rd,"p_0"']
	&
	O_1' \arrow[d,"p"]
	\\
	&
	O_0
\end{tikzcd}
\end{equation}
and $p_1'$ is the universal covering of $O_1'$.
In general, the covering $p_1'$ is not unique,
but we fix a choice for each component $O_1'$ of $O_1$.

Recall that the fundamental group $\Gamma_0=\pi_1^{\orb}(O_0)$ of $O_0$
is defined to be the group of deck transformation of $p_0$
and that $\Gamma_0$ is transitive on the fibers of $p_0$
and simply transitive on the fibers over regular points of $O_0$.
The corresponding statements hold for $p_1'$,
where we denote the group of deck transformations of $p_1'$ by $\Gamma_1'$.
Since $p_0=p\circ p_1'$, we have $\Gamma_1'\subseteq\Gamma_0$.

For $x_0\in O_0$, choose $\tilde x\in\tilde O$ with $p_0(\tilde x)=x_0$
and, for each connected component $O_1'$ of $O_1$, set $x_1'=p_1'(\tilde x)\in O_1'$.
Then $p(x_1')=x_0$ and the part of the fiber of $p$ over $x_0$ in $O_1'$ is given by
\begin{align}\label{fiberpar}
	p^{-1}(x_0)\cap O_1' = p_1'(\Gamma_0\tilde x).
\end{align}
If $x_0$ is regular,
we obtain a right action of $\Gamma_0$ on $p^{-1}(x_0)$ by setting
\begin{align}\label{fiberact}
	yg = p_1'(hg\tilde x)
	\hspace{3mm}\text{for}\hspace{3mm}
	y = p_1'(h\tilde x).
\end{align}
Identifying $p_1'(h\tilde x)$ with $\Gamma_1'h$,
this action corresponds to the right action of $\Gamma_0$
on $\Gamma_1'\backslash\Gamma_0$.
Clearly, the action on the fiber depends on the choice of the point $\tilde x$ over $x_0$.

We fix a complete background metric on $O_0$ and its lift to $O_1$
and consider distances and geodesics with respect to these metrics.

Fix $x_0\in\mathcal R_0$ and choose $\tilde x\in\tilde O$ and $x_1'$
in the components $O_1'$ of $O_1$ as above.
For $r > 0$, set
\begin{align}\label{gr}
	G_{r} = \{ g \in \Gamma_0 \mid d(g\tilde{x},\tilde{x}) < r \}
	\hspace{3mm}\text{and}
	\hspace{3mm}
	N(r) = |G_{r}|.
\end{align}
Note that $G_r=G_r^{-1}$.
For any $y_1,y_2\in p^{-1}(x_0)$,
\begin{align}\label{gr2}
	d(y_1,y_2) < r \Longleftrightarrow \text{$y_2=y_1g$ for some $g\in G_r$},
\end{align}
by \cref{distcov}.
For any $\tilde y\in\tilde O$, we have
\begin{align}\label{gr4}
	|\{g\in\Gamma_0 \mid d(g\tilde x,\tilde y)<r \}| \le N(2r),
\end{align}
by the triangle inequality.

\begin{lem}\label{caes}
For any $r>0$ and $y_1\in O_1$, we have
\begin{align*}
	|p^{-1}(x_0)\cap B_{r}(y_1)|\le N(2r).
\end{align*}
\end{lem}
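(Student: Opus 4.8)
The plan is to transfer the count from $O_1$ to the fundamental group $\Gamma_0$ by lifting everything to the universal covering $\tilde O$, where \eqref{gr4} provides exactly the desired bound.

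First I would reduce to a single component. Since no rectifiable curve joins points in different connected components of $O_1$ and $r<\infty$, the ball $B_r(y_1)$ is contained in the component $O_1'$ of $O_1$ that contains $y_1$. Hence $S:=p^{-1}(x_0)\cap B_r(y_1)\subseteq p^{-1}(x_0)\cap O_1'$. Recall that $p_1'\colon\tilde O\to O_1'$ is the universal covering of $O_1'$; since $O_1$ carries a complete metric, so does $O_1'$, and hence so does $\tilde O$ by \cref{comcov}. In particular, \cref{distcov} is available for the Riemannian covering $p_1'$.

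Next, I would fix a lift $\tilde y_1\in\tilde O$ of $y_1$, i.e.\ $p_1'(\tilde y_1)=y_1$. For each $y\in S$ we have $d(y_1,y)<r$, so \cref{distcov} applied to $p_1'$ produces a point $\tilde y=\tilde y(y)\in(p_1')^{-1}(y)$ with $d(\tilde y_1,\tilde y(y))=d(y_1,y)<r$; fix one such choice for every $y\in S$. Since $p_0(\tilde y(y))=p(p_1'(\tilde y(y)))=p(y)=x_0$ and $\Gamma_0$ acts transitively on $p_0^{-1}(x_0)=\Gamma_0\tilde x$, we may write $\tilde y(y)=g(y)\tilde x$ with $g(y)\in\Gamma_0$, and then $d(g(y)\tilde x,\tilde y_1)<r$. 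This defines a map $\Phi\colon S\to\{g\in\Gamma_0:d(g\tilde x,\tilde y_1)<r\}$ by $\Phi(y)=g(y)$.

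Finally, $\Phi$ is injective: applying $p_1'$ gives $p_1'(\Phi(y)\tilde x)=p_1'(\tilde y(y))=y$, so $\Phi(y)=\Phi(y')$ forces $y=y'$. Therefore $|S|\le|\{g\in\Gamma_0:d(g\tilde x,\tilde y_1)<r\}|\le N(2r)$ by \eqref{gr4}, which is the assertion. The only point requiring care is that lifts under $p_1'$ need not be unique; this is handled by fixing the choices $\tilde y(y)$, and hence $g(y)$, once and for all, and no further obstacle arises.
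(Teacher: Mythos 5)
Your proof is correct and follows essentially the same route as the paper: both arguments lift the relevant points from $O_1'$ to the universal cover $\tilde O$ via \cref{distcov}, express the resulting lifts as $g\tilde x$ with $g\in\Gamma_0$, and then invoke \eqref{gr4} to bound the count by $N(2r)$. The only cosmetic difference is that the paper parameterizes the points of $p^{-1}(x_0)\cap B_r(y_1)$ directly by coset representatives $g_i$ and observes the $h_ig_i$ are pairwise distinct, whereas you package the same content as an explicit injection $\Phi$ into $\{g\in\Gamma_0: d(g\tilde x,\tilde y_1)<r\}$.
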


\begin{proof}
Suppose that $y_1\in B_{r}(x'_1g_i)$, $1\le i\le n$, where the $[g_i]$
are pairwise different in $\Gamma_1\backslash\Gamma_0$ .
Then there exist $h_i\in \Gamma_1$ such that
\begin{align*}
	d(h_ig_i\tilde x,\tilde y) = d(x'_1g_i,y_1) < r,
\end{align*}
by \cref{distcov}, where $\tilde y$ is any point in $p_1^{-1}(y_1)$.
Since the $h_ig_i$ are pairwise different, we get $n\le N(2r)$, by \eqref{gr4}.
\end{proof}

\section{Amenability of actions}
\label{secame}

Consider a right action of a countable group $\Gamma$ on a countable set $X$.
The action is called \textit{amenable} if there exists an invariant mean on $\ell^{\infty}(X)$;
that is, a linear map $\mu \colon \ell^{\infty}(X) \to \mathbb{R}$ such that
\begin{align*}
	\inf f \leq \mu (f) \leq \sup f \text{ and } \mu (g^{*} f) = \mu(f)
\end{align*}
for any $f \in \ell^{\infty}(X)$ and any $g \in \Gamma$.
The group $\Gamma$ is called \emph{amenable} if the right action of $\Gamma$ on itself is amenable.

Clearly, any (right) action of $\Gamma $ on any finite set is amenable.
Furthermore, an action of $\Gamma$ on a countable set $X$ is amenable
if its restriction to a non-empty invariant subset of $X$ is amenable.

Amenability refers to some kind of asymptotic smallness of $X$ with respect to the action of $\Gamma$.
This is made precise by the following characterization,
due to F\o{}lner in the case of groups \cite[Main Theorem and Remark]{Fo}
and then extended to actions by Rosenblatt \cite[Theorems 4.4 and 4.9]{Ro}.
Given a finite $G \subseteq \Gamma$ and an $\varepsilon > 0$,
a F\o{}lner set $F$ for $G$ and $\varepsilon$ is a non-empty, finite subset of $X$
satisfying $|Fg \setminus F| < \varepsilon |F|$ for any $g \in G$. 

\begin{thm}\label{Folner}
The action of $\Gamma$ on $X$ is amenable if and only if,
for any finite $G \subseteq \Gamma$ and $\varepsilon > 0$,
there exists a F\o{}lner set for $G$ and $\varepsilon$.
\end{thm}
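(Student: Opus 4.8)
The plan is to prove the two implications separately. For the forward (``if'') direction I would argue by weak-$*$ compactness. For each finite $G\subseteq\Gamma$ and $\ve>0$ fix a Følner set $F=F_{G,\ve}$ and form the probability density $\vf_{G,\ve}=|F|^{-1}\chi_F\in\ell^1(X)$, which, viewed in $\ell^\infty(X)^*$ via $\vf(f)=\sum_x f(x)\vf(x)$, is a mean. Since $g$ acts on $X$ by a bijection we have $|Fg|=|F|$, so $|F\setminus Fg|=|Fg\setminus F|<\ve|F|$ and hence $\|g_*\vf_{G,\ve}-\vf_{G,\ve}\|_1<2\ve$ for every $g\in G$, where $g_*$ denotes the action on $\ell^1(X)\subseteq\ell^\infty(X)^*$ dual to the given action $g^*$ on $\ell^\infty(X)$. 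The set of means is a weak-$*$ closed subset of the unit ball of $\ell^\infty(X)^*$, hence weak-$*$ compact by Banach--Alaoglu, so the net $(\vf_{G,\ve})$, directed by $(G,\ve)\preceq(G',\ve')\iff G\subseteq G'$ and $\ve\ge\ve'$, has a weak-$*$ cluster point $\mu$, again a mean; for each fixed $g$ the displayed estimate gives $g_*\vf_{G,\ve}-\vf_{G,\ve}\to0$ in $\ell^1$-norm, hence weak-$*$, and since $g_*$ is weak-$*$ continuous this forces $g_*\mu=\mu$, i.e.\ $\mu(g^*f)=\mu(f)$ for all $f$. Thus $\mu$ is an invariant mean.

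For the converse, the first and main step is to turn an invariant mean $\mu$ into a \emph{norm}-almost-invariant density, via the convexity (Day / Hahn--Banach) trick. Fix a finite $G\subseteq\Gamma$ and $\ve>0$, let $P(X)\subseteq\ell^1(X)$ be the set of probability densities, and consider the convex set $C=\{(g_*\vf-\vf)_{g\in G}\mid\vf\in P(X)\}$ in $E=\bigoplus_{g\in G}\ell^1(X)$ with the sum of the $\ell^1$-norms. I claim that $0$ lies in the norm closure of $C$. Otherwise Hahn--Banach separation yields $(f_g)_{g\in G}\in E^*=\bigoplus_{g\in G}\ell^\infty(X)$ and $c>0$ with $\sum_{g\in G}\la f_g,g_*\vf-\vf\ra\ge c$ for all $\vf\in P(X)$; since $g_*$ is dual to $g^*$, the left-hand side equals $\la h,\vf\ra$ with $h=\sum_{g\in G}(g^*f_g-f_g)\in\ell^\infty(X)$, and testing against the point masses $\delta_x\in P(X)$ gives $h\ge c$ pointwise, hence $\mu(h)\ge c$; but invariance gives $\mu(h)=\sum_{g\in G}(\mu(g^*f_g)-\mu(f_g))=0$, a contradiction. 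Hence there is $\vf\in P(X)$ with $\sum_{g\in G}\|g_*\vf-\vf\|_1<\ve$.

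From such a $\vf$, a layer-cake (coarea) argument produces the Følner set. Write $\vf=\int_0^\infty\chi_{F_t}\,dt$ with $F_t=\{x\in X:\vf(x)>t\}$; for $t>0$ the set $F_t$ is finite, and $\int_0^\infty|F_t|\,dt=\|\vf\|_1=1$. Using $|a-b|=\int_0^\infty|\chi_{\{a>t\}}-\chi_{\{b>t\}}|\,dt$ for $a,b\ge0$, the identity $\{x:(g_*\vf)(x)>t\}=F_tg$, and interchanging sum and integral, one gets $\|g_*\vf-\vf\|_1=\int_0^\infty|F_tg\bigtriangleup F_t|\,dt\ge\int_0^\infty|F_tg\setminus F_t|\,dt$, whence, summing over $g\in G$,
\[
 \int_0^\infty\Big(\sum_{g\in G}|F_tg\setminus F_t|-\ve|F_t|\Big)\,dt\;\le\;\sum_{g\in G}\|g_*\vf-\vf\|_1-\ve\;<\;0.
\]
So the integrand is negative for some $t>0$, and for such a $t$ necessarily $F_t\ne\emptyset$ and $|F_tg\setminus F_t|<\ve|F_t|$ for every $g\in G$; thus $F:=F_t$ is a Følner set for $G$ and $\ve$. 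I expect the convexity step to be the real obstacle: weak-$*$ density of $P(X)$ among the means yields asymptotic invariance only in the weak-$*$ topology, whereas the layer-cake step needs $\ell^1$-norm almost-invariance, and closing this gap is precisely the content of the Hahn--Banach argument above (a form of Mazur's lemma); the remaining bookkeeping with the dual action and the averaging is routine.
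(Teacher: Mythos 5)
Your proof is correct and complete. The paper itself does not prove \cref{Folner}; it merely cites F\o{}lner \cite{Fo} for groups and Rosenblatt \cite{Ro} for the extension to actions, so there is no in-paper argument to compare against. What you give is the standard functional-analytic proof of the F\o{}lner criterion for actions: the forward direction by forming the normalized indicator densities $|F|^{-1}\chi_F\in\ell^1(X)$, noting $g_*\chi_F=\chi_{Fg}$ and $|Fg|=|F|$ so that $\|g_*\vf-\vf\|_1<2\ve$, and taking a weak-$*$ cluster point via Banach--Alaoglu; and the converse by Day's convexity trick (Hahn--Banach separation of $0$ from the convex set $\{(g_*\vf-\vf)_{g\in G}:\vf\in P(X)\}$ in $\bigoplus_{g\in G}\ell^1(X)$, using the invariant mean evaluated on $h=\sum_g(g^*f_g-f_g)\ge c>0$ to rule out a separating functional), which upgrades weak-$*$ asymptotic invariance to $\ell^1$-norm asymptotic invariance, followed by Namioka's layer-cake argument to extract a nonempty level set $F_t$ with $\sum_{g\in G}|F_tg\setminus F_t|<\ve|F_t|$. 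All the intermediate computations check out, including the identification $\{g_*\vf>t\}=F_tg$ and the normalization $\int_0^\infty|F_t|\,dt=\|\vf\|_1=1$. This is essentially the argument in Rosenblatt's paper and in standard textbook treatments, so your proposal is entirely in accord with the sources the paper cites.
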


In particular, it follows that the action of $\Gamma$ on $X$ is amenable
if and only if the restriction to any finitely generated subgroup of $\Gamma$ is amenable. 

Let $p \colon O_{1} \to O_{0}$ be a covering of orbifolds with boundary, where $O_0$ is connected, but $O_1$ possibly not.
As explained in \cref{setup}, for any connected component $O_1'$ of $O_{0}$,
we have for the fundamental groups that $\Gamma_1'\subseteq\Gamma_0$.
The covering $p$ is called \emph{amenable} if the right action of $\Gamma_0$
on the disjoint union of the $\Gamma_1'\backslash\Gamma_0$ is amenable,
where union is taken over all connected components $O_1'$ of $O_{1}$.
After fixing a regular $x_{0} \in O_{0}$, an $\tilde{x} \in \tilde{O}$ over $x_0$,
and a covering $p_1'\colon\tilde{O}\to O_1'$ for any connected component $O_1'$ of $O_{1}$,
the aforementioned action coincides with the action of $\Gamma_{0}$ on $p^{-1}(x_{0})$,
defined in (\ref{fiberact}).
Therefore, the covering $p$ is amenable if and only if the latter action is amenable.

\begin{exa}\label{exancc}
	For any covering $p \colon O_{1} \to O_{0}$ (where $O_0$ is connected),
	\begin{align*}
		p \sqcup \id \colon O_{1} \sqcup O_{0} \to O_{0}
	\end{align*}
	is an amenable covering.
\end{exa}

Recall that F\o{}lner's condition allows us to characterize amenability of an action of a group $\Gamma$
in terms of the restriction of the action to finitely generated subgroups of $\Gamma$.
In the context of coverings,
this is reflected by the following characterization of amenability. 

\begin{prop}\label{amecom}
Let $p \colon O_{1} \to O_{0}$ be a covering with $O_{0}$ connected, and $K_1\subseteq K_2\subseteq \cdots$
be an exhaustion of $O_0$ by compact domains with smooth boundary.
Then $p$ is amenable if and only if the restrictions $p\colon p^{-1}(K_n)\to K_n$ of $p$ are amenable.
\end{prop}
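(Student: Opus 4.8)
The plan is to translate both sides into amenability of group actions on the fiber over a fixed regular point and then apply the F\o{}lner criterion (\cref{Folner}) together with its consequence that an action is amenable if and only if its restriction to every finitely generated subgroup is. First I would fix a regular point $x_0$ in the interior of $K_1$, so that $x_0\in K_n$ for all $n$ and $x_0$ is a regular interior point of each $K_n$; I would also choose, as in \cref{setup}, a point $\tilde x$ over $x_0$ in the universal covering $\tilde O$ of $O_0$ and, for each component of $O_1$, a factorization of the universal covering of $O_0$ through it. By the definition of amenability of a covering in \cref{secame}, $p$ is amenable if and only if the right action of $\Gamma_0=\pi_1^{\orb}(O_0)$ on $p^{-1}(x_0)$ from \eqref{fiberact} is amenable. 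Since $x_0\in K_n$, we have $p^{-1}(K_n)\cap p^{-1}(x_0)=p^{-1}(x_0)$, and the same definition, applied to the orbifold-with-boundary covering $p\colon p^{-1}(K_n)\to K_n$, says that this restriction is amenable if and only if the corresponding right action of $\Gamma_n:=\pi_1^{\orb}(K_n)$ on $p^{-1}(x_0)$ is amenable.

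Next I would establish the two facts that make these conditions comparable. First, the covering $p\colon p^{-1}(K_n)\to K_n$ is the pullback of $p\colon O_1\to O_0$ along the inclusion $j_n\colon K_n\hookrightarrow O_0$, and, under the resulting identification of the fibers over $x_0$, the $\Gamma_n$-action on $p^{-1}(x_0)$ is the restriction of the $\Gamma_0$-action \eqref{fiberact} along the homomorphism $\iota_n\colon\Gamma_n\to\Gamma_0$ induced by $j_n$. I would check this from the universal-cover picture of \cref{setup}: $p_0^{-1}(K_n)\subseteq\tilde O$ is a (possibly disconnected) Riemannian covering of $K_n$, the universal covering of $K_n$ factors through each of its components, and the resulting deck-transformation homomorphisms assemble into $\iota_n$ compatibly with the coset descriptions \eqref{fiberpar} and \eqref{fiberact}. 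Second, every finitely generated subgroup $H\subseteq\Gamma_0$ is contained in $\iota_n(\Gamma_n)$ for all sufficiently large $n$: each of finitely many generators $\gamma$ of $H$ is represented by a loop at $x_0$ obtained by projecting to $O_0$ a path in the (connected, hence path-connected) orbifold $\tilde O$ from $\tilde x$ to $\gamma\tilde x$; the union of the images of these loops is compact, hence contained in $K_n$ for $n$ large, and inside $K_n$ each loop represents an element of $\Gamma_n$ that $\iota_n$ maps to the corresponding generator of $H$.

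Granting these two facts, the equivalence becomes formal. If $p$ is amenable, then $\Gamma_0$ acts amenably on $p^{-1}(x_0)$; an invariant mean for this action restricts to an invariant mean for the subgroup $\iota_n(\Gamma_n)$ and, since the $\Gamma_n$-action factors through the surjection $\Gamma_n\twoheadrightarrow\iota_n(\Gamma_n)$, pulls back to an invariant mean for $\Gamma_n$, so $p\colon p^{-1}(K_n)\to K_n$ is amenable. Conversely, assume each $p\colon p^{-1}(K_n)\to K_n$ is amenable, and let $H\subseteq\Gamma_0$ be finitely generated; choose $n$ with $H\subseteq\iota_n(\Gamma_n)$. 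An invariant mean for the amenable $\Gamma_n$-action on $p^{-1}(x_0)$ is, because that action factors through $\iota_n$, also invariant for $\iota_n(\Gamma_n)$, hence for its subgroup $H$. As $H$ was an arbitrary finitely generated subgroup of $\Gamma_0$, the $\Gamma_0$-action on $p^{-1}(x_0)$ is amenable by \cref{Folner} and the remark following it; that is, $p$ is amenable.

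I expect the main obstacle to be the identification in the second paragraph — that restricting the covering over $K_n$ coincides with pulling back along $j_n$, and that the monodromy (fiber) action is functorial in the base — since for orbifolds this must be matched against the chart-wise definition of coverings and the universal-cover construction rather than quoted from classical covering-space theory. Once that functoriality and the exhaustion $\Gamma_0=\bigcup_n\iota_n(\Gamma_n)$ are in place, the remainder is a direct application of the F\o{}lner criterion.
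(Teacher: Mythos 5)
Your proposal is correct, but it takes a genuinely different route from the paper. The paper works entirely within the geometric framework it has set up: it fixes the complete metric, defines the finite sets $G_r\subseteq\Gamma_0$ and $G_{r,n}\subseteq\pi_1^{\orb}(K_n)$ by distance conditions on the respective universal covers, and compares them directly on the level of the fiber — showing, via \eqref{gr2} and the fact that minimizing geodesics of length $<r$ based at $y\in p^{-1}(x_0)$ stay inside $p^{-1}(K_n)$ once $B(x_0,r)\subseteq K_n$, that each $Fg$ with $g\in G_r$ is contained in $\bigcup_{g'\in G_{r,n}}Fg'$ and vice versa. This lets the paper pass a F\o{}lner set for one action to a F\o{}lner set for the other (at the cost of the $\varepsilon/|G_r|$ resp. $\varepsilon/|G_{r,n}|$ bookkeeping), with no need to introduce the homomorphism $\iota_n\colon\Gamma_n\to\Gamma_0$ or to invoke any functoriality of $\pi_1^{\orb}$. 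You, by contrast, reduce the statement to two algebraic facts — that the monodromy action of $\Gamma_n$ on $p^{-1}(x_0)$ is the pullback of the $\Gamma_0$-action along $\iota_n$, and that $\Gamma_0=\bigcup_n\iota_n(\Gamma_n)$ — and then let invariant means and the finitely-generated-subgroup characterization of amenability do the rest. Your approach is conceptually cleaner and closer to the classical covering-space picture, but it shifts the real work onto exactly the point you flag at the end: establishing, in the orbifold setting with the chart-wise definition of covering and the deck-group definition of $\pi_1^{\orb}$, that restriction over $K_n$ is a pullback and that monodromy is functorial in the base. The paper's metric argument sidesteps that machinery entirely, which is why it reads as more pedestrian but is also more self-contained given what the paper has developed. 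Both proofs ultimately rest on the same compactness phenomenon (short loops or short geodesic segments at $x_0$ are already captured inside $K_n$ for $n$ large), packaged once geometrically and once algebraically.
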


\begin{proof}
Endow $O_{0}$ with a complete Riemannian metric and fix a regular point $x_{0}$ in the interior of $K_{1}$. 
Denote by $p_{0} \colon \tilde{O} \to O_{0}$ and $p_{n} \colon \tilde{K}_{n} \to K_{n}$ the universal coverings,
and choose $\tilde{x} \in p_{0}^{-1}(x_{0})$ and $x_{n} \in p_{n}^{-1}(x_{0})$.
Given $r>0$, consider the finite set $G_{r}$ defined in (\ref{gr}) and the finite sets
\[
	G_{r,n} := \{ g \in \pi_{1}^{\orb}(K_{n}) \mid d(gx_{n},x_{n}) < r \}.
\]
Assume first that $p$ is amenable and let $n \in \mathbb{N}$.
Given $\varepsilon > 0$ and a finite subset $G^{\prime}$ of $\pi_{1}^{\orb}(K_{n})$,
there exists $r > 0$ such that $G \subseteq G_{r,n}$.
Let $F$ be a F\o{}lner set for $G_{r}$ and $\varepsilon/|G_{r}|$.
It follows from (\ref{gr2}) that $d_{p^{-1}(K_{n})}(y,yg^{\prime}) < r$ for any $y \in p^{-1}(x_{0})$ and $g^{\prime} \in G_{r,n}$.
In particular, $d_{O_{1}}(y,yg^{\prime}) < r$ and (\ref{gr2}) yields that there exists $g \in G_{r}$ with $yg^{\prime} = yg$.
Therefore, for any $g'\in G_{r,n}$,
we have that  $Fg'$ is contained in the union of $Fg$ with $g \in G_{r}$, and in particular,
\[
	|Fg' \setminus F| \leq \sum_{g \in G_{r}} |Fg \setminus F| < \varepsilon,
\]
which yields that the covering $p \colon p^{-1}(K_{n}) \to K_{n}$ is amenable.
	
Conversely, consider $\varepsilon > 0$ and a finite subset $G$ of $\pi_{1}^{\orb}(O_{0})$.
Then there exists $r > 0$ such that $G \subseteq G_{r}$
and $n \in \mathbb{N}$ such that $B(x_{0},r) \subseteq K_{n}$.
Since $p \colon p^{-1}(K_{n}) \to K_{n}$ is amenable,
there exists a F\o{}lner set $F$ for $G_{r,n}$ and $\varepsilon/|G_{r,n}|$.
For $y \in F$ and $g \in G_{r}$, we obtain from (\ref{gr2}) that $d_{O_{1}}(yg,y) < r$.
Since $B(x_{0},r) \subseteq K_{n}$,
it follows that any minimizing geodesic from $y$ to $yg$ lies in $p^{-1}(K_{n})$,
and hence, $d_{p^{-1}(K_{n})}(yg,y) < r$.
In view of (\ref{gr2}), we obtain that there exists $g^{\prime} \in G_{r,n}$ such that $yg = yg^{\prime}$,
which means that $Fg$ is contained in the union of $Fg'$ with $g'\in G_{r,n}$.
We conclude that $F$ is a F\o{}lner set for $G$ and $\varepsilon$, which yields that $p$ is amenable.
\end{proof}

\cref{amecom} illustrates the importance of considering non-connected covering spaces.
Namely, the preimage $p^{-1}(K)$ of a compact domain $K$ in $O_0$ with smooth boundary
may not be connected even if $O_1$ is.

\section{Monotonicity of $\lambda_0$}
\label{secele}

In our standard setup of a Riemannian covering $p\colon O_1\to O_0$ of Riemannian orbifolds
with compatible Schr\"odinger operators $S_1 = \Delta + V_1$ and $S_0 = \Delta + V_0$, respectively,
let $f\in C^\infty_c(O_1)$, and define a function $f_0\ge0$ on $O_0$ by
\begin{align}\label{pushd}
	f_0^2(x) =  \sum_{y\in p^{-1}(x)} \frac{|x|}{|y|} f^{2}(y)
\end{align}
on $O_{0}$.
We call $f_0$ the \emph{pushdown} of $f$.

\begin{lem}\label{pushd2}
Given a non-zero $f \in C^{\infty}_{c}(O_1)$,
its pushdown $f_0$ is a Lipschitz function on $O_0$ with compact support
such that $f_0^2$ is smooth and 
\begin{align*}
	R_{S_0}(f_0) \le R_{S_1}(f).
\end{align*}
\end{lem}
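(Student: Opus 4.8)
The plan is to reduce everything to a local, chartwise computation and then patch together using the Dirichlet domain from \cref{dirdom2}, following the manifold proof but taking orbifold orders into account. First I would check that $f_0$ is well defined and has the claimed regularity: since $f$ has compact support, the sum in \eqref{pushd} is locally finite (each regular fiber meets a ball in finitely many points by \cref{caes}, and singular points are handled by passing to charts), so $f_0^2$ is a locally finite sum of smooth functions, hence smooth, and $\supp f_0 = p(\supp f)$ is compact. That $f_0$ itself is Lipschitz follows since $f_0^2$ is smooth and nonnegative with compact support, so $f_0 = \sqrt{f_0^2}$ is Lipschitz (the only issue, where $f_0^2$ vanishes, is controlled because a smooth nonnegative function has $|\nabla\sqrt{u}|\le\tfrac12\||\mathrm{Hess}\,u|\|^{1/2}_\infty$ near its zero set). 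By \cref{Lipschitz}, $f_0\in H_{S_0}$, so its Rayleigh quotient is defined.

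Next I would establish the two integral identities that make the inequality work. For the denominators, I would use \cref{dirdom2}.\ref{dd6} on a Dirichlet domain $D_y\subseteq O_1$: writing the integral of $f_0^2$ over $O_0$ and unwinding the definition \eqref{pushd} via the fiberwise sum, one gets $\int_{O_0} f_0^2 = \int_{O_1} f^2$, after accounting for the order factors $|x|/|y|$, which is exactly what \eqref{mba} and the covering diagram \eqref{comdia} were set up to produce. For the numerators I need the pointwise Cauchy--Schwarz-type inequality
\begin{align*}
	|\nabla f_0|^2(x) + V_0(x)f_0^2(x) \le \sum_{y\in p^{-1}(x)} \frac{|x|}{|y|}\bigl(|\nabla f|^2(y) + V_1(y)f^2(y)\bigr),
\end{align*}
valid at regular $x$. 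The potential term is an equality since $V_0\circ p = V_1$ (compatibility). For the gradient term, at a regular point $x$ the covering is a local isometry onto a neighborhood near each preimage, so differentiating $f_0^2 = \sum (|x|/|y|)f^2\circ(\text{local inverse})$ and using $2f_0\nabla f_0 = \nabla f_0^2$ together with Cauchy--Schwarz in the finite sum gives the bound; the order factors are locally constant near a regular point so they pass through the derivative harmlessly. Integrating this inequality over $O_0$ and again using \cref{dirdom2}.\ref{dd6} to convert $\sum_y (|x|/|y|)(\cdots)$ into $\int_{O_1}$ yields $\int_{O_0}(|\nabla f_0|^2 + V_0 f_0^2) \le \int_{O_1}(|\nabla f|^2 + V_1 f^2)$. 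Dividing the two displays gives $R_{S_0}(f_0)\le R_{S_1}(f)$.

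The main obstacle I expect is handling the singular locus rigorously. The pointwise gradient inequality is only immediate on $\mathcal R_{O_0}$, and $f_0$ need not be smooth there, only Lipschitz; so I cannot naively differentiate at singular points. The clean way around this is to observe that $\mathcal S_{O_0}$ and the cut locus $C(x)$ both have measure zero (\cref{cutloc}, \cref{nullnull}), so all the relevant integrals may be computed on the regular part, where \cref{dirdom2}.\ref{dd5} gives an honest isometry $D_y\setminus p^{-1}(C(x))\to O_0\setminus C(x)$; the order factors $|x|/|y|$ are then all equal to $1$ on the regular locus, and what remains is literally the manifold computation of \cite[Theorem 1.2]{BMP1} transported along this isometry. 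One should still double-check that $|\nabla f_0^2|$, which a priori is only an $L^\infty$ function, satisfies $|\nabla f_0|^2 = |\nabla f_0^2|^2/(4f_0^2)$ a.e.\ on $\{f_0>0\}$ and that the contribution from $\{f_0=0\}$ to $\int|\nabla f_0|^2$ vanishes — this is a standard fact for Lipschitz functions whose square is smooth, and is where the earlier lemmas on $H_S$ and Lipschitz functions are used.
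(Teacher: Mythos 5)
Your proposal is sound and reaches the same conclusion, but you route the integral identities through the Dirichlet domain $D_y$ (citing \cref{dirdom2}), whereas the paper never invokes Dirichlet domains here. The paper instead unfolds directly: on the full-measure set $\mathcal R_0$ it writes $f_0^2(x)=\sum_{y\in p^{-1}(x)}f^2(y)$ (the order factors are all $1$ there, because the fiber over a regular point consists of regular points) and then integrates fiberwise, using that $p^{-1}(\mathcal R_0)\subseteq\mathcal R_1$ is a genuine manifold covering. This is cleaner and, importantly, does not require completeness of the metric, which your Dirichlet-domain route does. Note also a small reference slip: \cref{dirdom2}.\ref{dd6} pushes functions \emph{up} from $O_0$ to $D_y$; what you actually need is the isometry \cref{dirdom2}.\ref{dd5} together with the fact that the domains $D_y$, $y\in p^{-1}(x_0)$, tile $O_1$ up to measure zero. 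Finally, for smoothness of $f_0^2$ you write ``locally finite sum of smooth functions, hence smooth,'' but the individual summands $\frac{|x|}{|y|}f^2(y)$ are \emph{not} smooth functions of $x$ near singular points, as branches of $p^{-1}$ merge and the order jumps; the paper's chartwise identity, expressing $f_0^2\circ\pi$ as a $G$-averaged finite sum on the codomain $\hat U$, is exactly what resolves this and should be made explicit rather than left as ``handled by passing to charts.'' Your Glaeser-type argument for Lipschitz regularity of $f_0=\sqrt{f_0^2}$ is a valid alternative to the paper's, which instead bounds $|\nabla f_0|$ directly on $\{f_0\ne0\}$ via the Cauchy--Schwarz estimate; both are fine.
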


Since $\lambda_0(S_0,O_0)$ and $\lambda_0(S_1,O_1)$ are the infimum of Rayleigh quotients
of non-vanishing compactly supported Lipschitz or smooth functions on $O_0$ and $O_1$ (either way),
\eqref{monot} is an immediate consequence of \cref{pushd2}.

\begin{proof}[Proof of \cref{pushd2}]
Let $U=\hat{U}/G$ be an evenly covered domain of $O_0$.
Denote by $V_j\cong\hat U/G_j$, $j\in J$, the connected components of $p^{-1}(U)$,
by $p_j$ the restriction $p|_{V_j}$,
and by $\pi\colon\hat U \to U$ and $\pi_j\colon\hat U\to V_j$ the projections.
Given $x\in U$ and $u\in\pi^{-1}(x)$,
\begin{align*}
\begin{split}
	(f_0^2\circ\pi)(u)
	&= f_0^2(x)
	= \sum_{j \in J} \sum_{y\in p_j^{-1}(x)} \frac{|x|}{|y|} f^2(y) \\
	&= \sum_{j \in J} \sum_{y\in p_j^{-1}(x)} \frac{|x|}{|y|}\frac1{|\pi_j^{-1}(y)|}
		\sum_{v\in\pi_j^{-1}(y)} (f^2\circ\pi_j)(v) \\
	&= \sum_{j \in J} \frac{|x|}{|G_j|} \sum_{v\in\pi^{-1}(x)} (f^2\circ\pi_j)(v) \\
	&= \sum_{j \in J} \frac{1}{|G_j|} \sum_{g\in G} (f^2\circ\pi_j)(gu),
\end{split}
\end{align*}
where we use that $p_j\pi_j=\pi$ and $|y||\pi_j^{-1}(y)|=|G_j|$ for the penultimate equality
and that $|x||\pi^{-1}(x)|=|G|$ for the last.
It follows that $f_0^2\in C^{\infty}_{c}(O_0)$.

For $h\in C^{\infty}(O_0)$, let $h_1=h\circ\pi$ be its lift to $O_1$.
Using that the set $\mathcal R_0$ of regular points of $O_0$ and its preimage in $O_1$ are of full measure
and that the preimage is contained in the set $\mathcal R_1$ of regular points of $O_1$,
we get
\begin{align*}
	\int_{O_0} h f_0^{2}
	= \int_{\mathcal R_0} h f_0^{2}
	= \int_{\mathcal R_0} \Sigma_{y\in p^{-1}(x)} h_1(y)f^2(y)
	= \int_{O_1} h_1 f^{2}.
\end{align*}
Therefore
\begin{align*}
	\| f_0 \|_{L^2(O_0)} = \| f \|_{L^2(O_1)}
	\hspace{3mm}\text{and}\hspace{3mm}
	\int_{O_0} V_0 f_0^2 = \int_{O_1} V_1 f^2.
\end{align*}
Moreover
\begin{align*}
	f_0^2(x) = \sum_{y \in p^{-1}(x)} f^2(y)
\end{align*}
on $\mathcal R_0$,
hence
\begin{align*}
	|\nabla f_0(x)|^2
	\le \sum_{y \in p^{-1}(x)} |\nabla f(y)|^2
\end{align*}
on $\mathcal R_0\cap\{f_0^2\ne0\}$. This shows that $f_{0}$ is non-negative on $O_{0}$,
and smooth with bounded gradient on $\{ f_{0}^{2} \neq 0 \}$, and thus, $f_{0}$ is Lipschitz.
Furthermore, it follows that
\begin{align*}
	\int_{O_{0}} |\nabla f_{0}|^{2}
	\le \int_{O_{1}} |\nabla f|^{2}.
\end{align*}
In conclusion, $R_{S_0}(f_0)\le R_{S_1}(f)$.
\end{proof}

\section{Stability of $\lambda_0$ for amenable coverings}
\label{secsta}

The aim of the section is the proof of \cref{amen}.\ref{tame}.
To that end, fix a complete background metric on $O_0$ and consider its lift to $O_1$. In what follows, distances and geodesics
are taken with respect to the given complete background metrics.
However, gradients, Laplace operators, volumes, and integrals
are taken with respect to the original metrics,
since the main issue of our discussion are Rayleigh quotients
with respect to the original metrics. Fix a regular point $x_{0} \in O_{0}$, a point in the universal covering space above it, and recall the definition of $G_r\subseteq\Gamma_0$ and $N(r)$ from \eqref{gr}.

For $r>0$ and $y\in p^{-1}(x_0)$,
consider the function $\psi_y$ on $O_1$ defined by
\begin{align*}
	\psi_y(z) =
\begin{cases}
	 1 &\text{if $d(z,y)\le r$,}\\
	 r + 1 - d(z,y) &\text{if $r\le d(z,y)\le r+1$,} \\
	 0 &\text{if $d(z,y)\ge r+1$,}
\end{cases}
\end{align*}
a Lipschitz function with Lipschitz constant $1$.
For any $z\in O_{1}$, there are at most $N(2r+3)$ points $y\in p_0^{-1}(y_0)$ with $z\in\supp\psi_y$,
by \cref{caes}.
Hence the function
\begin{align*}
	\psi_{1} = \max \left\{ 0, 1 - \sum_{y \in p^{-1}(x)} \psi_y \right\}
\end{align*}
on $O_1$ is well-defined and admits $N(2r+3)$ as a Lipschitz constant.
Thus we obtain a partition of unity on $O_1$ consisting of
\begin{align}\label{partone}
	\vf_{1} = \frac{\psi_{1}}{\psi_{1} + \sum_{y\in p^{-1}(x)}\psi_y}
	\hspace{3mm}\text{and the}\hspace{3mm}
	\vf_y = \frac{\psi_y}{\psi_1 + \sum_{y\in p^{-1}(x)} \psi_y}
\end{align}
with $y \in p^{-1}(x)$,
the \emph{partition of unity corresponding to} $r > 0$.
Clearly $\supp\vf_y=\supp\psi_y$ and $\sum_{y\in p^{-1}(x)}\vf_y=1$ in $B_{r}(y)$ for any $y\in p^{-1}(x)$.

\begin{lem}\label{partone2}
The functions $\vf_y$ admit Lipschitz constant $3N(2r+3)$,
the function $\vf_1$ admits Lipschitz constant $9N(2r+3)^2$.
\end{lem}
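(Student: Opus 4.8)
The plan is to estimate the Lipschitz constants of the functions $\vf_y$ and $\vf_1$ directly from their defining formulas \eqref{partone}, using the Lipschitz bounds already recorded for $\psi_y$ (constant $1$) and $\psi_1$ (constant $N(2r+3)$), together with the local finiteness from \cref{caes}. The essential point is the general fact that if $g$ and $h$ are nonnegative Lipschitz functions and $h$ is bounded below by a positive constant on the set where we care, then $g/h$ is Lipschitz with a constant controlled by $\operatorname{Lip}(g)$, $\operatorname{Lip}(h)$, $\sup g$, and $\inf h$. Here the denominator $\Psi := \psi_1 + \sum_{y\in p^{-1}(x_0)}\psi_y$ satisfies $\Psi\ge 1$ everywhere (at each point $z$, either some $\psi_y(z)=1$, or all $\psi_y(z)=0$ and then $\psi_1(z)=1$), and $\Psi\le 1 + N(2r+3)$ since at most $N(2r+3)$ of the $\psi_y$ are nonzero at any given point; both numerators $\psi_y$ and $\psi_1$ are bounded by $1$.

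First I would treat $\vf_y=\psi_y/\Psi$. On a short geodesic segment between two points $z,z'$ (distances with respect to the background metric), I would write
\begin{align*}
  |\vf_y(z)-\vf_y(z')|
  = \left|\frac{\psi_y(z)}{\Psi(z)}-\frac{\psi_y(z')}{\Psi(z')}\right|
  \le \frac{|\psi_y(z)-\psi_y(z')|}{\Psi(z)}
    + \frac{\psi_y(z')\,|\Psi(z')-\Psi(z)|}{\Psi(z)\Psi(z')}.
\end{align*}
Using $\Psi\ge1$, $\psi_y\le1$, $\operatorname{Lip}(\psi_y)\le1$, and $\operatorname{Lip}(\Psi)\le \operatorname{Lip}(\psi_1)+\sum\operatorname{Lip}(\psi_y)\le N(2r+3)+N(2r+3)=2N(2r+3)$ — where the sum over $y$ again has at most $N(2r+3)$ nonzero terms on any short segment — the right-hand side is at most $(1 + 2N(2r+3))\,d(z,z')\le 3N(2r+3)\,d(z,z')$, giving the claimed constant $3N(2r+3)$ for $\vf_y$. (Strictly, one should note $\operatorname{Lip}(\sum_{y}\psi_y)\le N(2r+3)$ since on any short enough segment only $N(2r+3)$ summands vary; this is the standard partition-of-unity bookkeeping.)

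Next I would treat $\vf_1=\psi_1/\Psi$ by the same estimate, now with $\operatorname{Lip}(\psi_1)\le N(2r+3)$ in the first term. This gives
\begin{align*}
  |\vf_1(z)-\vf_1(z')|
  \le \bigl(N(2r+3) + 2N(2r+3)\bigr)\,d(z,z')
  = 3N(2r+3)\,d(z,z'),
\end{align*}
which is even better than the asserted $9N(2r+3)^2$; so the stated bound holds a fortiori. Alternatively — and this is presumably what the authors intend in order to keep a uniform-looking statement — one writes $\vf_1 = 1 - \sum_{y}\vf_y$, and since at most $N(2r+3)$ of the $\vf_y$ are nonzero near any point, $\operatorname{Lip}(\vf_1)\le N(2r+3)\cdot 3N(2r+3)=3N(2r+3)^2\le 9N(2r+3)^2$. **The main obstacle**, such as it is, is purely bookkeeping: making precise that the local finiteness from \cref{caes} lets one bound the Lipschitz constant of the sums $\sum_y\psi_y$ and $\sum_y\vf_y$ by $N(2r+3)$ times the individual constants, rather than by an uncontrolled number of terms. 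This is handled by restricting to geodesic segments short enough that the index set of nonzero summands is essentially constant, which is legitimate since Lipschitz constants can be checked locally along geodesics.
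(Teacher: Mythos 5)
Your proof is correct and follows essentially the same route as the paper: for $\vf_y=\psi_y/\Psi$ it is exactly the paper's ``easy calculation'' (denominator $\Psi\ge 1$ with $\operatorname{Lip}(\Psi)\le 2N(2r+3)$, numerator $\le 1$ with $\operatorname{Lip}\le 1$, quotient rule), and your alternative $\vf_1=1-\sum_y\vf_y$ is precisely the paper's ``immediate consequence.'' One small slip: your parenthetical justification of $\Psi\ge 1$ omits the case that some $\psi_y(z)\in(0,1)$ but none equals $1$; in that case $\sum_y\psi_y(z)<1$, $\psi_1(z)=1-\sum_y\psi_y(z)>0$, and $\Psi(z)=1$, so the conclusion still holds. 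You are also right that applying the quotient rule directly to $\vf_1=\psi_1/\Psi$ yields the sharper constant $3N(2r+3)$, which of course implies the stated $9N(2r+3)^2$; the paper's looser constant simply comes from its route through $1-\sum_y\vf_y$, and nothing downstream depends on the exact value.
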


\begin{proof}
The numerator of $\vf_y$ in \eqref{partone} takes values in $[0,1]$ and admits $1$ as a Lipschitz constant,
the denominator takes values in $[1,N(2r+3)]$ and admits $2N(2r+3)$ as a Lipschitz constant.
The first assertion follows now from an easy calculation.
Since $\vf_1=1-\sum\vf_y$, the second is an immediate consequence.
\end{proof}

For a finite subset $P\subseteq p^{-1}(x_0)$ consider the non-negative function
\begin{align}\label{phip}
	\chi = \sum_{y\in P} \vf_y
\end{align}
and the sets
\begin{equation}\label{phiq}
\begin{split}
	Q_+ &= \{ y \in p^{-1}(x) \mid \text{$\chi=1$ in $B_r(y)$} \}, \\
	Q_- &= \{ y \in p^{-1}(x) \mid \text{$0<\chi(z)<1$ for some $z\in B_r(y)$} \}, 
\end{split}
\end{equation}
In virtue of \cref{partone2}, we obtain that $\chi$ admits $3N(2r+3)^{2}$ as a Lipschitz constant.

\begin{prop}\label{amenable}
Suppose that $p\colon O_1\to O_0$ is amenable, and let $\ve>0$.
Then there exists a finite subset $P\subseteq p^{-1}(x_0)$ such that $|Q_-|<\ve|Q_+|$.
\end{prop}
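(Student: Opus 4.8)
The plan is to translate the Følner property of the amenable action of $\Gamma_0$ on $p^{-1}(x_0)$ directly into the desired inequality $|Q_-| < \varepsilon|Q_+|$ by choosing $P$ to be a Følner set for a suitable finite $G \subseteq \Gamma_0$ and a suitable small parameter. The key observation is that membership of a fiber point $y$ in $Q_+$ or $Q_-$ depends only on the values of $\chi = \sum_{z\in P}\vf_z$ on the ball $B_r(y)$, and by \eqref{gr2} the only fiber points in $B_r(y)$ are of the form $yg$ with $g \in G_r$. So if $P$ ``engulfs'' $y$ in the sense that $yg \in P$ for \emph{all} $g \in G_r$, then $\sum_{z\in P}\vf_z \ge \sum_{z\in p^{-1}(x_0)\cap B_r(y)}\vf_z = 1$ on $B_r(y)$ (using that the $\vf_z$ sum to $1$ on $B_r(y)$ together with the remark right after \eqref{partone}), forcing $\chi \equiv 1$ there and hence $y \in Q_+$. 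Conversely, $y \in Q_-$ can only happen if $P$ ``partially meets'' the $G_r$-orbit piece around $y$, i.e. $yg \in P$ for some but not all $g \in G_r$; more precisely I will show that if $y \in Q_-$ then there exist $g_1, g_2 \in G_r$ (equivalently, in $G_r\cup\{e\}$) with $yg_1 \in P$ and $yg_2 \notin P$, which is a boundary-type condition relative to the set $G := G_r$.

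First I would take $G = G_r$ and let $\delta = \varepsilon/(2N(r))$, say, and let $P$ be a Følner set for $G_r$ and $\delta$, so $|Pg \setminus P| < \delta|P|$ for every $g \in G_r$; such $P$ exists by \cref{Folner}. Since $G_r = G_r^{-1}$, we also control $|P \setminus Pg| = |Pg^{-1}\setminus P|$, hence $|P \triangle Pg| < 2\delta|P|$ for all $g \in G_r$. Define the ``interior'' $P^\circ = \{y \in P : yg \in P \text{ for all } g \in G_r\}$. Then $P \setminus P^\circ \subseteq \bigcup_{g\in G_r}(P \setminus Pg)$, so $|P \setminus P^\circ| \le \sum_{g \in G_r}|P\setminus Pg| < N(r)\cdot 2\delta|P| = \varepsilon|P|$, and in particular $|P^\circ| > (1-\varepsilon)|P|$. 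By the engulfing observation above, $P^\circ \subseteq Q_+$, so $|Q_+| \ge |P^\circ| > (1-\varepsilon)|P|$. Next I bound $|Q_-|$: a point $y \in Q_-$ satisfies $0 < \chi < 1$ somewhere on $B_r(y)$; since $\chi$ restricted to $B_r(y)$ is a partial sum of the $\vf_z$ over $z \in P\cap B_r(y)$, while the full sum over $z \in p^{-1}(x_0)\cap B_r(y)$ is $\equiv 1$ there, having $\chi \not\equiv 1$ forces that $P$ omits some fiber point $yg$ of $B_r(y)$ (with $g \in G_r\cup\{e\}$), and having $\chi \not\equiv 0$ forces $yg' \in P$ for some $g' \in G_r\cup\{e\}$. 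So each $y \in Q_-$ gives a ``mixed'' pair; rewriting this orbit-wise, $y \in Q_-$ implies $y \in \bigcup_{g\in G_r\cup\{e\}}(Pg^{-1} \triangle \text{(something)})$ — more cleanly, either $y \in P$ and $yg \notin P$ for some $g$, i.e. $y \in \bigcup_g (P\setminus Pg^{-1})$, or $y \notin P$ and $yg' \in P$ for some $g'$, i.e. $y \in \bigcup_{g'}(Pg'^{-1}\setminus P) = \bigcup_{g'}(P g'^{-1}\setminus P)$. Using $G_r = G_r^{-1}$ each of these unions has size $< N(r)\cdot\delta|P|$-ish, so altogether $|Q_-| \le 2N(r)\cdot 2\delta|P| = O(\varepsilon)|P|$.

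Combining, with $\delta$ chosen small enough in terms of $\varepsilon$ and $N(r)$ at the outset (and noting $N(r)$ is finite because $\Gamma_0$ acts properly discontinuously, so $G_r$ is finite), we get $|Q_-| < \varepsilon|P| < \frac{\varepsilon}{1-\varepsilon}|Q_+|$, and after replacing $\varepsilon$ by a smaller value this is the claimed $|Q_-| < \varepsilon|Q_+|$.

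\textbf{Main obstacle.} The delicate point is pinning down exactly which finite set $G \subseteq \Gamma_0$ the Følner condition must be applied to, and verifying the two set-theoretic containments — $P^\circ \subseteq Q_+$ and the ``$Q_-$ lives near the boundary of $P$'' inclusion — rigorously in terms of the $G_r$-action on the fiber. In particular one must be careful that the relevant ball for deciding $\vf_z|_{B_r(y)} \ne 0$ has radius that may exceed $r$ (since $\supp\vf_z = \supp\psi_z = \overline{B_{r+1}(z)}$), so the correct group to use is not $G_r$ but $G_{2r+1}$ or so, and $N(r)$ above should correspondingly be $N(2r+1)$ or $N(2r+3)$ to match \cref{caes} and \cref{partone2}; getting these radii consistent is the only real bookkeeping hazard, but it does not affect the structure of the argument since all the relevant $N(\cdot)$ are finite.
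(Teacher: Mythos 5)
Your proposal is essentially correct and does give a valid proof, but it takes a slightly different route from the paper, so let me compare.

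The paper also starts from a F\o{}lner set $P$ for the group $G_{2r+2}$ (not $G_r$ --- you correctly flag this radius issue yourself: since $\supp\vf_z=\overline{B_{r+1}(z)}$, the relevant fiber points $z$ for deciding $\chi|_{B_r(y)}$ are those with $d(z,y)<2r+1$, so the F\o{}lner set must be taken for $G_{2r+1}$ or $G_{2r+2}$, and all occurrences of $N(r)$ in your sketch must be bumped accordingly). Beyond that bookkeeping, the two proofs diverge in two places. First, to control $|Q_+|$, you introduce the \lq\lq interior\rq\rq{} $P^\circ=\{y\in P: yg\in P\ \forall g\}$ and show $P^\circ\subseteq Q_+$, whereas the paper skips this and uses only the cheap inclusion $P\subseteq Q_-\cup Q_+$, arriving at $|Q_-|\le c\ve|Q_-\cup Q_+|$ and then solving for the ratio. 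Second, and more substantively, your bound on $|Q_-|$ is \emph{direct}: you observe that if $y\in Q_-\cap P$ then, since $\vf_y>0$ on $B_r(y)$, the failure $\chi<1$ produces some $z_1=yg\notin P$, so $y\in P\setminus Pg^{-1}$; and symmetrically if $y\notin P$. Thus $y$ \emph{itself} lies in $\bigcup_g (P\triangle Pg)$. The paper instead only deduces that \emph{some} nearby point $y_1$ lies in $\bigcup_g(Pg\setminus P)$, and then invokes Lemma (caes) to bound by $N(4r+2)$ the number of $y\in Q_-$ associated to a given $y_1$. Your version is cleaner and sharper --- it needs no multiplicity factor $N(4r+2)$ and makes no use of Lemma (caes) at this step --- precisely because you exploit the trivial self-proximity $d(y,y)=0<r$ to put $y$ directly into the boundary set. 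Both approaches are equally valid; yours buys a slightly cleaner estimate at the cost of the extra definition $P^\circ$, while the paper's is more economical in notation by folding everything into the single inequality $|Q_-|\le\ve N(4r+2)N(2r+2)|Q_-\cup Q_+|$.
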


\begin{proof}
	Since the right action of $\Gamma_0$ on $p^{-1}(x_0)$ is amenable,
	there exists a finite subset $P\subseteq p^{-1}(x_0)$ such that $|Pg\setminus P|<\ve|P|$
	for all $g\in G_{2r+2}$.
	
	Let $y\in Q_-$ and $z\in B_{r}(y)$ such that $0<\chi(z)<1$. Since
	\[
	\sum_{y \in p^{-1}(x_{0})} \vf_{y}(z) = 1,
	\]
	it follows that there is $y_0\in P$ and $y_1\in p^{-1}(x_0)\setminus P$, such that $\vf_{y_i} (z) > 0$, $i=0,1$. 
	This yields that $d(y_{i},z) < r+1$, and, in particular, that $d(y_{0},y_{1}) < 2r + 2$.
	In view of \eqref{gr2}, we obtain that there exists $g \in G_{2r+2}$ such that $y_{1} = y_{0} g$,
	which shows that $y_{1} \in Pg \smallsetminus P$, for some $g \in G_{2r+2}$. 
	Hence, there exist at most $\ve N(2r+2) |P|$ such $y_{1}$.
	Since $d(y,y_{1}) < 2r +1$, it follows from \cref{caes} that, for any such $y_{1}$,
	there exist at most $N(4r+2)$ such $y$.
	Therefore, we obtain that
	\[
	|Q_{-}| \le \ve N(4r+2) N(2r+2) |P| \leq \ve N(4r+2) N(2r+2) |Q_- \cup Q_+|,
	\]
	where we use that $P\subseteq Q_-\cup Q_+$.
	\cref{amenable} is an immediate consequence of this inequality.
\end{proof}

\begin{proof}[Proof of \cref{amen}.\ref{tame}]
	Let $f\in C^\infty_c(O_0)$, $f\ne0$, and $f_1=f\circ p$ be the lift of $f$ to $O_1$.
	Choose $x_0\in\mathcal R_0$ and $r>0$ such that $\supp f\subseteq B_{r}(x_0)$.
	Then $f_1$ has support in the neighborhood $U_{r}(p^{-1}(x_0))$ of radius $r$ about $p^{-1}(x_0)$.
	
	Consider the partition of unity on $O_{1}$ corresponding to $r$ as in \eqref{partone}.
	Given $\ve>0$, choose the finite set $P\subseteq p^{-1}(x_0)$ according to \cref{amenable}
	with $\chi$ as in \eqref{phip} and $Q_-$ and $Q_+$ as in \eqref{phiq}.
	Then $\chi$ has compact support contained in the closed neighborhood $N_{r+1}(P)$ of radius $r+1$ about $P$,
	and admits $L_\chi=3N(2r+3)^2$ as a Lipschitz constant.
	
	Since $\supp f \subseteq B_{r}(x_{0})$,
	it is easy to see that $\supp f_{1} \cap D_{y} \subseteq B_{r}(y)$ for any $y \in p^{-1}(x_{0})$.
	This yields that $\supp (\chi f)$ is contained in the union of $D_{y}$ with $y \in Q = Q_{+} \cup Q_{-}$.
	
	We want to extimate the Rayleigh quotient $R_{S_1}(\chi f_1)$.
	Since the intersection of different $D_{y}$'s is of measure zero, we compute
	\begin{align*}
		R_{S_1}(\chi f_1)
		&= \frac{\int_{O_1}\chi f_1S_1(\chi f_1)}{\int_{O_1} (\chi f_1)^2}
		= \frac{\int_{O_1}\{|\nabla(\chi f_1)|^2+\chi f_1V_1\chi f_1\}}{\int_{O_1} \chi^2f_1^2} \\
		&= \frac{\sum_{y \in Q} \int_{D_y}\{|\nabla(\chi f_1)|^2 + V_1\chi^2 f_1^2\}}
		{\sum_{y \in Q}\int_{D_y} \chi^2 f_1^2}\\
		&\le \frac{\sum_{y \in Q} \int_{D_y}\{|\nabla(\chi f_1)|^2
		+ V_1\chi^2 f_1^2\}}{\sum_{y \in Q_{+}}\int_{D_y} \chi^2 f_1^2}.
	\end{align*}
	We now estimate the terms arising from the right hand side.
	Since $\chi=1$ on $B_{y}(y)$ for any $y\in Q_+$,
	we have that $\chi f_{1} = f_{1}$ in a neighborhood of $\supp (\chi f_{1}) \cap D_{y}$.
	This, together with Proposition \ref{dirdom2}.\ref{dd6}, yields that
	\[
	\int_{D_y} \chi^2 f_1^2 = \int_{O_0} f^{2} \text{ and }
	\int_{D_y} \{ |\nabla(\chi f_1)|^2+ V_1\chi^{2} f_1^{2} \}
	= \int_{O_0} \{ |\nabla f|^2+ V f^{2} \}
	\]
	for any $y \in Q_{+}$.
	Denote by $L_{\chi}$ and $L_{f}$ the respective Lipschitz constants of $\chi$ and $f$
	and by $C_{f}$ and $C_{V}$ the respective maximum of $|f|$ and of $|V|$ on $\supp f$.
	It is easy to see that (at any point of $O_{1}$)
	\begin{align*}
		|\nabla(\chi f_1)|^2+ V_1\chi^2 f_1^2
		&\le 2 \chi_{1}^{2} | \nabla f_{1} |^{2} + 2 f_1^{2} | \nabla \chi |^{2} + |V_{1}|\chi^{2} f_{1}^{2} \\
		&\le 2 L_{f}^{2} + 2 C_{f}^{2} L_{\chi}^2 + C_{V} C_{f}^{2} =:C,
	\end{align*}
	where we use that $0 \leq \chi \leq 1$.
	Therefore, using again Proposition \ref{dirdom2}.\ref{dd6}, we obtain that
	\[
	\int_{D_y} \{ |\nabla(\chi f_1)|^2+ V_1\chi^{2} f_1^{2} \} \leq C | \supp f|
	\]
	for any $y \in Q_{-}$. From the above estimates, we conclude that
	\[
	R_{S_1}(\chi f_1) \leq R_{S_0}(f) + \frac{C | \supp f|}{\int_{O_0} f^{2}} \frac{|Q_-|}{|Q_+|}
	< R_{S_0}(f) + \frac{C | \supp f|}{\int_{O_0} f^{2}} \varepsilon.
	\]
	This shows that, for any $\delta>0$,
	we have $R_{S_1}(\chi f_1)=R_{S_0}(f) + \delta$ if $\ve$ is chosen sufficiently small. The proof is completed by (\ref{bspec}).
\end{proof}

\section{Stability implies amenability: the case of a closed base}
\label{secclos}

Let $p \colon O_{1} \to O_{0}$ be a Riemannian covering with $O_{0}$ closed (that is, compact and without boundary) and connected,
and $O_{1}$ possibly non-connected.
The aim of this section is to prove the following:

\begin{thm}\label{amecc}
	If $\lambda_{0}(O_{1}) = 0$, then $p$ is amenable.
\end{thm}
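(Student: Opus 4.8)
The plan is to prove the contrapositive in spirit but really to argue directly: assuming $\lambda_0(O_1)=0$, produce F\o lner sets for the action of $\Gamma_0$ on the fiber $p^{-1}(x_0)$ over a fixed regular point $x_0\in O_0$, and then invoke \cref{Folner}. The crucial input is that $\lambda_0(O_1)=0$ means there exist compactly supported Lipschitz test functions $f$ on $O_1$ with Rayleigh quotient $R(f)$ arbitrarily small, i.e.\ $\int_{O_1}|\nabla f|^2 \le \delta \int_{O_1} f^2$. The idea is to transport such an $f$ into combinatorial data on the fiber. Concretely, since $O_0$ is closed, there is a uniform $r>0$ (roughly the diameter plus injectivity scale) such that $O_0 = B_r(x_0)$ and every $z\in O_1$ lies within $r$ of $p^{-1}(x_0)$; moreover $N(s)<\infty$ for every $s$, so the combinatorial geometry of the fiber has bounded local complexity by \cref{caes}.

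**Key steps.** First, I would discretize: fix a small $\delta>0$ and a test function $f\in\Lip_c(O_1)$, $f\ne 0$, with $\int|\nabla f|^2 < \delta\int f^2$, and define a function $F$ on the fiber $p^{-1}(x_0)$ by averaging $f^2$ over the Dirichlet domain (or over a ball $B_r(y)$), e.g.\ $F(y) = \int_{D_y} f^2$ or the analogous windowed quantity, following the pushdown philosophy of \cref{secele}. Second — this is where the hypothesis is used — I would show that the ``discrete gradient'' of $F$ along $\Gamma_0$-translates is controlled by $\int|\nabla f|^2$: if $y,y'\in p^{-1}(x_0)$ are neighbors, meaning $y'=yg$ with $g\in G_s$ for some fixed $s=O(r)$, then $|F(y)-F(y')|$ is bounded by an integral of $|\nabla f|^2$ (times $|f|$) over the overlap region, using that $p$ is a local isometry and the Dirichlet domains tile $O_1$ by \cref{dirdom2}. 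Summing over neighbors and applying Cauchy--Schwarz gives $\sum_{y} \sum_{g\in G_s}|F(y)-F(yg)| \le C\sqrt{\delta}\, \sum_y F(y)$ with $C$ depending only on $r$ and the bounded-complexity constants $N(\cdot)$. Third, I would run the standard Maz'ya/coarea (``chain'') argument on the discrete side: decompose $F$ along its level sets $U_t=\{y: F(y)>t\}$, use the discrete coarea identity $\sum_g\sum_y|F(y)-F(yg)| = \int_0^\infty \sum_g |\partial_g U_t|\,dt$ where $|\partial_g U_t| = |\{y\in U_t: yg\notin U_t\}|$, and $\sum_y F(y) = \int_0^\infty |U_t|\,dt$, to conclude that for some level $t$ the finite set $U_t$ satisfies $\sum_{g\in G_s}|U_t g\setminus U_t| < C\sqrt{\delta}\,|U_t|$. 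Since $\delta$ was arbitrary and any finite $G\subseteq\Gamma_0$ is contained in some $G_s$, this produces F\o lner sets for every $(G,\varepsilon)$, so the action is amenable and hence $p$ is amenable.

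**Main obstacle.** The technical heart — and the step I expect to be most delicate — is Step 2, the comparison between the continuous Dirichlet energy $\int_{O_1}|\nabla f|^2$ and the discrete boundary sum on the fiber. One must choose the window function defining $F$ (plateau radius, cutoff slope) so that translates $B_r(y)$ and $B_r(yg)$ with $g\in G_s$ actually overlap in a region of definite size where $f$ is being compared, and so that the ``error'' terms involving $|\nabla f||f|$ can be reabsorbed; this requires a Poincar\'e-type inequality on the overlap regions, which holds because $O_0$ is closed (so these regions are uniformly bi-Lipschitz to a fixed compact model, and the orbifold Bishop--Gromov and Cheng comparisons of \cref{suscut} give uniform control). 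There is also bookkeeping to ensure all multiplicities stay bounded by the $N(\cdot)$'s from \cref{caes}. The closedness of $O_0$ is what makes every constant here uniform; the non-closed case is precisely what the later sections must handle by different means.
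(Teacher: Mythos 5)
Your proposal takes a genuinely different route from the paper's. The paper goes through sets: from $\lambda_0(O_1)=0$, the Cheeger inequality \eqref{chine} gives $h(O_1)=0$; then the orbifold analogue of Buser's Lemma 7.2 (\cref{Buser}) upgrades this to the much stronger statement that there are bounded open sets $A\subseteq O_1$ with $|U_r(\partial A)|<\ve|A|$ for a \emph{prescribed} thickening radius $r$; and finally the Følner set is extracted directly as $F=p^{-1}(x_0)\cap U_r(A)$ with $r>2\diam O_0$, using that each Dirichlet tile $D_y$ has volume $|O_0|$ and diameter $<r$. You instead stay on the function side: discretize a near-optimal test function $f$ to a function $F$ on the fiber by Dirichlet-domain averaging, show that the discrete Dirichlet energy of $F$ is controlled by the continuous one, and only then apply a coarea argument—but discretely, on the level sets of $F$. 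In short, the paper runs the coarea/Cheeger step at the start, continuously; you run it at the end, discretely. This is closer in spirit to the Brooks/Kanai discretization philosophy than to the Buser isoperimetric route.

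The one place your sketch genuinely needs to be completed is Step~2, and the correct tool there is the $L^1$-Poincaré inequality on balls $B_R(y)\subseteq O_1$ with a constant \emph{uniform in} $y$. Concretely, choose $R=2\diam O_0+s$ so that $D_y,D_{yg}\subseteq B=B_R(y)$ for $g\in G_s$; since $|D_y|=|D_{yg}|=|O_0|$, writing $\mu$ for the mean of $f^2$ over $B$,
\[
|F(y)-F(yg)|\le \int_{D_y}|f^2-\mu|+\int_{D_{yg}}|f^2-\mu|
\le 2\int_B|f^2-\mu|\le 4\,C_P(B)\int_B|f||\nabla f|,
\]
and then summing over $y$ and $g\in G_s$ with the bounded multiplicity from \cref{caes}, followed by Cauchy--Schwarz, gives the inequality you want with $C\sqrt\delta$. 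The uniformity of $C_P(B_R(y))$ is exactly where closedness of $O_0$ enters: it follows from the Neumann-Cheeger bound that the paper proves in Lemmas~\ref{isop est} and~\ref{cover} via Buser's \cref{bulem}, Bishop--Gromov (\cref{bgv}) and the lifted-chart pushdown \eqref{mdba}. So your approach uses the same geometric inputs the paper feeds into Prop.~\ref{Buser}, only repackaged as a Poincaré inequality rather than a Neumann-Cheeger inequality. Two minor bookkeeping points: your discrete coarea identity should read $\sum_y|F(y)-F(yg)|=\int_0^\infty\bigl(|U_t\setminus U_tg^{-1}|+|U_tg^{-1}\setminus U_t|\bigr)\,dt$, and you need to note that $\{F>0\}$ is finite because $f$ has compact support—but both are immediate. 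With the Poincaré step spelled out, your argument is correct; the paper's route via \cref{Buser} buys a cleaner proof in that it never has to set up the discrete Rayleigh-quotient machinery, while yours makes the Brooks-type discretization visible.
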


We have the right action of $\pi_{1}^{\rm orb}(O_{0})$ on the right cosets of the fundamental group of any connected component of $O_{1}$ in $\pi_{1}^{\rm orb}(O_{0})$,
and, by definition, amenability of $p$ means that this action on the disjoint union of these cosets is amenable.
Recall that this action is equivalent to the action on the fiber of $p$, presented in \cref{setup}.
After fixing a regular point $x_{0} \in O_{0}$ and a point in the universal covering space above it as in \cref{setup}, for any $y_{1},y_{2} \in p^{-1}(x_{0})$, we have that $d(y_{1},y_{2}) < r$ if and only if $y_{2} = y_{1}g$ for some $g \in G_{r}$.

Cover $O_{0}$ with finitely many evenly covered, coordinate systems $\pi_{i} \colon \hat{U}_{i} \to U_{i} = \hat{U}_{i}/G_{i}$, which are extensible; that is, each $\pi_{i}$ can be extended to an evenly covered, coordinate system $\pi_{i} \colon \hat{V}_{i} \to V_{i}$, where $\hat{V}_{i} \subseteq \mathbb{R}^{n}$ is bounded and the closure of $\hat{U}_{i}$ contained in $\hat{V}_{i}$. Since $O_{0}$ is compact, there exists $r_{0}>0$ such that for any $x \in O_{0}$, we have that $B_{3r_{0}}(x)$ is contained in some $U_{i}$. It should be noticed that for any $y \in O_{1}$ there exists a lifted coordinate system $\pi_{ij} \colon \hat{U}_{i} \to V_{ij} = \hat{U}_{i}/G_{ij}$ such that $B_{3r_{0}}(y) \subseteq V_{ij}$.

Since $O_{0}$ is closed, it has Ricci curvature bounded from below, by $1-m$, say, and so does $O_{1}$.

\begin{lem}\label{isop est}
There exists a constant $C > 0$ such that, for any $i$, $x \in U_{i}$ and $\hat{x} \in \pi_{i}^{-1}(x)$, we have that
\begin{align*}
	h^{N}(B_{r_{0}}(\hat{x})) \geq C \text{ and } h^{N}(B_{2r_{0}}(\hat{x})) \geq C.
\end{align*}
\end{lem}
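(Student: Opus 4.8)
The plan is to deduce the two estimates from Buser's \cref{bulem}, producing a bound that depends only on the dimension $m$ and on $r_0$. I would start with curvature: since each $\pi_i$ is a local isometry and $O_0$ is closed with $\Ric\ge 1-m$, the metric pulled back to $\hat V_i$ — hence its restriction to $\hat U_i$ and to every metric ball that occurs below — also satisfies $\Ric\ge 1-m$.

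Next I would normalise $r_0$. Applying $\pi_i$ to $\overline{\hat U_i}\subseteq\hat V_i$ gives $\overline{U_i}\subseteq V_i$, so, the $\overline{U_i}$ being finitely many compact subsets of the respective $V_i$ and $O_0$ being compact, there is $\ve>0$ with $B_{3\ve}(x)\subseteq V_i$ whenever $x\in\overline{U_i}$; replacing $r_0$ by $\min\{r_0,\ve\}$ preserves the defining property of $r_0$ and in addition ensures $B_{3r_0}(x)\subseteq V_i$ for every $i$ and every $x\in U_i$. Now fix such $i$, $x$, and $\hat x\in\pi_i^{-1}(x)\subseteq\hat U_i$. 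A unit-speed geodesic $\hat\gamma$ of $\hat V_i$ issuing from $\hat x$ is carried by the local isometry $\pi_i$ to a geodesic of $O_0$ issuing from $x$, which is defined for all time because $O_0$ is complete and satisfies $\pi_i(\hat\gamma(t))\in B_t(x)\subseteq B_{3r_0}(x)\subseteq V_i$ for $t<3r_0$. A routine lifting-and-extension argument through the Riemannian covering $\pi_i$ — using completeness of $O_0$, the extensibility of the chart, and compactness of $\overline{\hat U_i}$ in $\hat V_i$ — then shows that $\hat\gamma$ itself is defined up to time $3r_0$. Hence $\exp_{\hat x}$ is defined on the ball of radius $3r_0$ in $T_{\hat x}\hat V_i$, and both $B_{r_0}(\hat x)$ and $B_{2r_0}(\hat x)$ are precompact geodesic balls, each starlike with respect to $\hat x$.

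Finally I would invoke \cref{bulem}. Since geodesics from $\hat x$ extend past radius $2r_0$ and $\Ric\ge 1-m$ throughout $B_{3r_0}(\hat x)$, the hypotheses of \cref{bulem} are met (completeness of the ambient manifold being needed only to guarantee the relevant geodesics, which we have supplied directly; if preferred, one extends $g$ past $\overline{\hat V_i}$ to a complete metric, unchanged on $B_{3r_0}(\hat x)$). For $s\in\{1,2\}$, taking $D=B_{sr_0}(\hat x)$ with $r=R=sr_0$ yields
\begin{align*}
	h^N\big(B_{r_0}(\hat x)\big)\ \ge\ \frac{C_m^{\,1+r_0}}{r_0}
	\qquad\text{and}\qquad
	h^N\big(B_{2r_0}(\hat x)\big)\ \ge\ \frac{C_m^{\,1+2r_0}}{2r_0}\,,
\end{align*}
and both right-hand sides depend only on $m$ and $r_0$; so any positive $C$ below both — their minimum, say — proves the lemma. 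The only genuinely delicate point is the geometric bookkeeping in the middle step: one must use the extensibility of the charts and the choice of $r_0$ to be sure that $B_{r_0}(\hat x)$ and $B_{2r_0}(\hat x)$ are honest geodesic balls inside a complete manifold, rather than truncated pieces of $\hat U_i$ near its boundary, for which the Neumann Cheeger constant would not be controlled. Once that is secured, the conclusion is an immediate application of \cref{bulem}.
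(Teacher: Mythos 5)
Your argument is correct and proceeds by the same route as the paper: reduce to Buser's \cref{bulem} for geodesic balls of radii $r_0$ and $2r_0$ in a manifold with $\Ric\geq 1-m$, which the paper accomplishes with the laconic remark that one can extend the metrics on the $\hat U_i$ to complete metrics on $\mathbb{R}^n$ with Ricci bounded from below and then apply \cref{bulem}. What you add — the normalisation of $r_0$ and the verification, via the extensibility $\overline{\hat U_i}\subseteq\hat V_i$ and the compactness of $O_0$, that $B_{r_0}(\hat x)$ and $B_{2r_0}(\hat x)$ are honest starlike geodesic balls rather than pieces truncated by the chart boundary — is precisely the bookkeeping the paper leaves implicit and is what makes the extension step legitimate.
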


\begin{proof}
We may extend the Riemannian metrics on the $\hat U_i$ to complete Riemannian metrics on $\mathbb{R}^{n}$
such that there Ricci curvature is bounded from below.
Then we can apply \cref{bulem} to arrive at \cref{isop est}.
\end{proof}

Recall that a subset $X$ of an orbifold $O$ is called a complete $2r$-separated subset
if $X$ is a maximal subset with the property $d(x,y) \geq 2r$ for any $x \neq y$ in $X$. It is clear that if $X$ is a complete $2r$-separated subset of $O$, then the balls $B_{2r}(x)$ with $x \in X$ cover $O$.

\begin{cor}\label{cover}
	There exists $C(r_{0}) > 0$ such that for any complete $2r_{0}$-package $X$ of $O_{1}$, we have that any $x \in O_{1}$ belongs to at most $C(r_{0})$ of the balls $B_{2r_{0}}(y)$ with $y \in X$.
\end{cor}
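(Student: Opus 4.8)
The plan is to run the standard volume-packing argument, now over orbifolds, using only that $O_1$ has Ricci curvature bounded below by $1-m$ together with the ratio form of the Bishop--Gromov comparison from \cref{bgv}.

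First I would reformulate the claim: for a point $x\in O_1$, the set of $y\in X$ with $x\in B_{2r_0}(y)$ is precisely $Y:=X\cap B_{2r_0}(x)$, since $x\in B_{2r_0}(y)\iff d(x,y)<2r_0\iff y\in B_{2r_0}(x)$. Thus it suffices to bound $|Y|$. If $Y=\emptyset$ there is nothing to prove, so assume $Y\ne\emptyset$. Next come three elementary observations from the triangle inequality: (i) since $X$ is $2r_0$-separated, the open balls $B_{r_0}(y)$, $y\in Y$, are pairwise disjoint; (ii) for each $y\in Y$ we have $d(x,y)<2r_0$, hence $B_{r_0}(y)\subseteq B_{3r_0}(x)$; and (iii) likewise $B_{3r_0}(x)\subseteq B_{5r_0}(y)$ for each $y\in Y$.

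Then I would apply \cref{bgv} with $k=-1$, so that the hypothesis $\Ric\ge(m-1)k=1-m$ holds globally on $O_1$, to the radii $r_0<5r_0$ and center any $y\in Y$. Writing $\beta(r)=|B_r(-1)|_m$ for the volume of the $r$-ball in the model space $M^m_{-1}$, this gives
\begin{align*}
	|B_{5r_0}(y)|\ \le\ \frac{\beta(5r_0)}{\beta(r_0)}\,|B_{r_0}(y)|,
\end{align*}
and combining with (iii) yields $|B_{r_0}(y)|\ge(\beta(r_0)/\beta(5r_0))\,|B_{3r_0}(x)|$ for every $y\in Y$. Summing over $y\in Y$ and using (i) and (ii),
\begin{align*}
	|B_{3r_0}(x)|\ \ge\ \sum_{y\in Y}|B_{r_0}(y)|\ \ge\ |Y|\,\frac{\beta(r_0)}{\beta(5r_0)}\,|B_{3r_0}(x)|.
\end{align*}
Since $B_{3r_0}(x)$ contains a non-empty open set it has positive measure, so dividing gives $|Y|\le\beta(5r_0)/\beta(r_0)=:C(r_0)$, which depends only on $r_0$ (and $m$), as desired.

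I do not anticipate a genuine obstacle here: this is the manifold packing argument verbatim. The one orbifold-specific subtlety, namely that $|B_r(z)|/\beta(r)\to1/|z|$ rather than $1$ as $r\to0$, never intervenes, because only the ratio form of Bishop--Gromov is used and \cref{bgv} already incorporates the orbifold case. The single point that needs slight care is securing a lower bound on $|B_{r_0}(y)|$ that is uniform over $y\in Y$; this is exactly why one compares each small ball with the common enlargement $B_{5r_0}(y)\supseteq B_{3r_0}(x)$ rather than with a fixed reference ball.
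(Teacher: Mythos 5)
Your proof is correct, and it takes a somewhat different route from the paper's. The paper bounds $|E_x|$ by bracketing on both sides with constants that do not depend on $x$: it observes that $|B_{r_0}(y)|\ge|B_{r_0}(p(y))|\ge c>0$ uniformly in $y$ because $O_0$ is closed, and it bounds $|B_{3r_0}(x)|\le\beta(3r_0)/|\Gamma_x|\le\beta(3r_0)$ using the absolute (limit at $r\to0$) form of the Bishop--Gromov comparison in \cref{bgv}. You instead stay purely local: you use only the \emph{relative} form of Bishop--Gromov at each $y$ to get $|B_{r_0}(y)|\ge(\beta(r_0)/\beta(5r_0))|B_{5r_0}(y)|\ge(\beta(r_0)/\beta(5r_0))|B_{3r_0}(x)|$, and the common factor $|B_{3r_0}(x)|$ cancels. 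This is slightly more self-contained: it does not invoke the compactness of $O_0$ (only the Ricci lower bound on $O_1$ that is deduced from it), and it does not use the $r\to0$ normalization $|B_r(x)|/\beta(r)\to1/|x|$ at all. The paper's version, on the other hand, produces the auxiliary uniform lower bound $c$ on small-ball volumes, which it reuses in the surrounding arguments. Both are valid; yours is the more robust packing argument, the paper's is terser given the compactness already in hand.
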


\begin{proof}
	Given a complete $2r_{0}$-package $X$ and $x \in O_{1}$, set $E_{x} := \{ y \in X : x \in B_{2r_{0}}(y)\}$. It is evident that the disjoint balls $B_{r_{0}}(y)$ with $y \in E_{x}$, are contained in $B_{3r_{0}}(x)$. It should be observed that $|B_{r_{0}}(y)| \geq |B_{r_{0}}(p(y))| \geq c > 0$, since $O_{0}$ is closed. We conclude from Proposition \ref{bgv} that
	\[
	c |E_{x}| \leq \sum_{y \in E_{x}} |B_{r_{0}}(y)| \leq  |B_{3r_{0}}(x)| \leq \frac{1}{|\Gamma_{x}|} \beta(3r_{0}) \leq \beta(3r_{0}),
	\]
	as we wished.
\end{proof}

We are ready to prove an analogue of a special version of Buser's \cite[Lemma 7.2]{Bu}.

\begin{prop}\label{Buser}
	If $h(O_{1}) = 0$, then for any $\varepsilon,r > 0$, there exists open bounded $A \subseteq O_{1}$ such that $|U_{r}(\partial A)| < \varepsilon |A|$.
\end{prop}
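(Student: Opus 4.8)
The plan is to mimic Buser's original argument \cite[Lemma 7.2]{Bu}, adapting it to the orbifold setting with the uniform geometric control we have just assembled. The hypothesis $h(O_1)=0$ means that for any $\delta>0$ there is a bounded open domain $A\subseteq O_1$ with smooth boundary such that $|\partial A|_{m-1}<\delta|A|_m$. Such an $A$ has a small boundary in codimension one, but this does not immediately bound the volume of a \emph{tubular} neighborhood $U_r(\partial A)$, which could in principle be large if $\partial A$ is very crinkled at scale $r$. The point of the proof is to replace $A$ by a slightly modified domain $A'$ whose boundary is ``flat at scale $r_0$'' in the sense that each small ball $B_{2r_0}(y)$ meets $\partial A'$ in a controlled way, using the uniform Neumann-Cheeger estimates of \cref{isop est} and the bounded-overlap property of \cref{cover}.

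\textbf{Step 1 (reduction to scale $r_0$).} First I would observe that it suffices to produce, for any $\varepsilon>0$, an open bounded $A'\subseteq O_1$ with $|U_{r_0}(\partial A')|<\varepsilon|A'|$, where $r_0$ is the fixed radius from the covering by extensible charts; indeed a neighborhood of radius $r$ for general $r$ is contained in a bounded union of radius-$r_0$ neighborhoods of finitely many translates, and the Bishop--Gromov bound \cref{bgv} together with the uniform lower volume bound on small balls (using that $O_0$ is closed) converts the $r_0$-statement into the $r$-statement at the cost of a multiplicative constant depending only on $r$ and $r_0$; absorbing that constant into $\varepsilon$ finishes the reduction.

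\textbf{Step 2 (the rounding lemma).} Fix a complete $2r_0$-separated subset $X\subseteq O_1$, so the balls $B_{2r_0}(y)$, $y\in X$, cover $O_1$ with overlap multiplicity at most $C(r_0)$ by \cref{cover}. Starting from a domain $A$ with $|\partial A|_{m-1}<\delta|A|_m$, I would run the standard rounding-off procedure one ball at a time: for each $y\in X$, if $B_{r_0}(y)$ meets $\partial A$ then by \cref{isop est} one of the two sets $B_{r_0}(y)\cap A$, $B_{r_0}(y)\setminus A$ has volume at most half of $|B_{r_0}(y)|$, and the Neumann--Cheeger inequality $h^N(B_{r_0}(\hat x))\ge C$ (pulled up to the ball $B_{r_0}(y)$ in $O_1$ via the lifted chart, using that $|G_{ij}|$-to-one projections do not decrease the Cheeger-type ratios) forces $|\partial A\cap B_{r_0}(y)|_{m-1}\ge C\min\{|A\cap B_{r_0}(y)|,|B_{r_0}(y)\setminus A|\}$. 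Modifying $A$ inside $B_{2r_0}(y)$ to be either all of $B_{2r_0}(y)$ or none of it — whichever change removes the smaller volume — one controls both the volume change and the new boundary created, each by a bounded multiple of $|\partial A\cap B_{2r_0}(y)|_{m-1}$. Summing over $y\in X$ with bounded overlap, the total volume lost and the total new boundary are $\le C'\,|\partial A|_{m-1}\le C'\delta|A|_m$, so for $\delta$ small the resulting $A'$ still has $|A'|_m\ge\tfrac12|A|_m$ and $|\partial A'|_{m-1}\le C''\delta|A|_m$; moreover $\partial A'$ is now a union of pieces of sphere-boundaries $\partial B_{2r_0}(y)$, so $U_{r_0}(\partial A')$ is covered by the balls $B_{3r_0}(y)$ over those $y\in X$ that were actually used, and the number of such $y$ is at most $|\partial A'|_{m-1}$ divided by a uniform lower bound on $|\partial B_{2r_0}(y)\cap A'|_{m-1}$ (again from \cref{isop est}, noting that a nonempty rounded piece has boundary measure bounded below). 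Hence $|U_{r_0}(\partial A')|\le (\#\text{used }y)\cdot\beta(3r_0)\le C'''\delta|A|_m\le 2C'''\delta|A'|_m$, and choosing $\delta<\varepsilon/(2C''')$ completes the proof.

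\textbf{The main obstacle} I expect is Step 2: making the ball-by-ball surgery genuinely well-defined and uniformly controlled in the orbifold setting, where each $B_{2r_0}(y)$ is a quotient $\hat U_i/G_{ij}$ and one must check that the isoperimetric/Neumann estimates of \cref{isop est} descend from $\hat U_i$ to the quotient ball with constants independent of the (finitely many possible) groups $G_{ij}$ — this is where the compactness of $O_0$ and the finiteness of the chosen atlas are essential — and that the rounding operation (defined upstairs $G_{ij}$-equivariantly, then pushed down) produces an honest bounded domain with rectifiable boundary whose measure changes are estimated by the coarea formula \cref{caf}. The bounded-overlap bound \cref{cover} is what keeps the cumulative errors summable; without it the surgeries could interact uncontrollably.
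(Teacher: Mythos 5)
Your proposal follows a genuinely different strategy from the paper, and it has gaps serious enough that it does not constitute a proof as written.

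The paper does \emph{not} perform ball-by-ball surgery. Instead it defines $A_+$, $A_0$, $A_-$ as the sets of points $x$ where the density $|A\cap B_r(x)|/|B_r(x)|$ is, respectively, strictly above, equal to, or strictly below the threshold $1/(2C_0)$, and it proves that $A_+$ is the desired domain. The crucial points are that $\partial A_+\subseteq A_0$ by continuity, and that at any $x\in A_0$ both $|A\cap B_r(x)|$ and $|A^c\cap B_r(x)|$ are uniformly bounded below (by $\tfrac{1}{2C_0}|B_r(x)|\gtrsim 1$), so the Neumann Cheeger estimate on the chart ball gives a genuine \emph{lower bound} on $|\partial A\cap B_r(x)|$ at every such $x$. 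Summing over a $2r$-separated net in $A_0$ with bounded overlap then controls $|U_{2r}(A_0)|\supseteq|U_{2r}(\partial A_+)|$. No domain modification, no surgery, no new boundary is ever created.

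Your rounding scheme has three gaps it does not close. First, you assert that the new boundary $\partial B_{2r_0}(y)\cap A$ created by emptying (or filling) a ball is bounded by a multiple of $|\partial A\cap B_{2r_0}(y)|_{m-1}$; this is false for a fixed radius $2r_0$. To make it true you must use a coarea/averaging argument to pick a good radius $\rho(y)\in[r_0,2r_0]$ with $|\partial B_{\rho(y)}(y)\cap A|\lesssim \tfrac1{r_0}|A\cap B_{2r_0}(y)|$, and then feed in the Neumann estimate on $|A\cap B_{2r_0}(y)|$; you mention the coarea formula only as an ``obstacle'' and never carry out this step. Second, the surgeries are performed in overlapping balls (overlap multiplicity $C(r_0)$), so after modifying $A$ in $B_{2r_0}(y_1)$ the ``smaller side'' in a neighbouring $B_{2r_0}(y_2)$ may change; bounded overlap by itself does not make the iterated surgery well defined or the error estimates additive, and you acknowledge but do not resolve this. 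Third, and most seriously, your final count ``number of used $y$ is at most $|\partial A'|_{m-1}$ divided by a uniform lower bound on $|\partial B_{2r_0}(y)\cap A'|_{m-1}$'' relies on a uniform lower bound that does not exist: a ball can be ``used'' while the resulting sphere piece in $\partial A'$ has arbitrarily small $(m-1)$-measure (e.g.\ near a tangency of two rounded balls), and \cref{isop est} controls a ratio (a Neumann Cheeger constant), not an absolute lower bound on pieces of boundary. Without this count the estimate $|U_{r_0}(\partial A')|\lesssim\delta|A|$ does not follow. The density-threshold device of the paper avoids all three problems at once, because it never produces new boundary and because at points of $A_0$ the relevant volume fractions are bounded below by construction, which is exactly what is needed to convert $|\partial A|$ into a count of balls.
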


\begin{proof}
	In view of the volume comparison theorem, it suffices to prove the assertion for any $\varepsilon > 0$ and a fixed $r > 0$. Set $r = r_{0}$ from the beginning of this section, and
	\[
	C_{0} := \max_{i} |G_{i}|,
	\]
	where $G_{i}$ are the groups corresponding to the coordinate systems in the beginning of this section. Since $h(O_{1}) = 0$, we have that for any $\varepsilon > 0$, there exists a smoothly bounded, compact domain $A \subseteq O_{1}$ with
	\begin{align}\label{a}
		\frac{|\partial A|}{|A|} < \delta := \min \left\{ \frac{C \beta(r) \varepsilon}{2 C_{0}^{2} \beta(4r)} , \frac{C \beta(r) \varepsilon }{2 C(r) C_{0} \beta(2r)} \right\}
	\end{align}
	We partition $O_{1}$ into the sets
	\begin{align}
		A_+ &= \{ x\in O_{1} \mid |A\cap B_r(x)| > \frac1{2C_0}|B_r(x)| \}, \label{ax1} \\
		A_0 &= \{ x\in O_{1} \mid |A\cap B_r(x)| = \frac1{2C_0}|B_r(x)| \}, \label{ax2} \\
		A_- &= \{ x\in O_{1} \mid |A\cap B_r(x)| < \frac1{2C_0}|B_r(x)| \}. \label{ax3} 
	\end{align}
	Clearly, $|A\cap B_r(x)|\ne0$ for all $x\in A_+\cup A_0$.
	Since $|B_r(x)|$ and $|A\cap B_r(x)|$ depend continuously on $x$,
	a path from $A_-$ to $A_+$ will pass through $A_0$.
	Since $A$ is bounded, $A_+$ and $A_0$ are bounded.
	Moreover, $\partial A_+\subseteq A_0$, $A_+$ and $A_-$ are open, and $A_0$ is closed, hence compact. We will show that $A_+$ satisfies the asserted inequality. By passing from $A$ to $A_+$, we get rid of a possibly \lq\lq hairy structure\rq\rq{} along the \lq\lq outer part\rq\rq{} of $A$. We pay by possibly losing regularity of the boundary.
	
	We now choose a $2r$-separated subset $X$ of $O_{1}$ as follows.
	We start with a $2r$-separated subset $X_0\subseteq A_0$ such that $A_0$ is contained in the union of the balls $B_{2r}(x)$ with $x\in X_0$.
	(If $A_0=\emptyset$, then $X_0=\emptyset$.)
	We extend $X_0$ to a $2r$-separated subset $X_0\cup X_+$ of $A_0\cup A_+$ such that $A_0\cup A_+$ is contained in the union of the balls $B_{2r}(x)$ with $x\in X_0\cup X_+$.
	(If $A_+=\emptyset$, then $X_+=\emptyset$.)
	We finally extend $X_0\cup X_+$ to a complete $2r$-separated subset $X=X_0\cup X_+\cup X_-$ of $O_{1}$.
	(If $A_-=\emptyset$, then $X_-=\emptyset$.)
	By definition, $X_+\subseteq A_+$ and $X_-\subseteq A_-$.
	Since $A$ is bounded and $|A\cap B_{r}(x)|\ne0$ for all $x\in X_0\cup X_+$, the sets $X_0$ and $X_+$ are finite.
	By the same reason, the set $Y$ of $x\in X_-$ with $|A\cap B_{2r}(x)| \ne0$ is finite.
	
	The neighborhood $U_{2r}(A_0)$ is covered by the balls $B_{4r}(x)$ with $x\in X_0$.
	Using Proposition \ref{bgv} and \eqref{ax2}, we therefore get
	\begin{align*}
		|U_{2r}(A_0)|
		&\le \sum_{x\in X_0} |B_{4r}(x)| \\
		&\le \frac{\beta(4r)}{\beta(r)} \sum_{x\in X_0} |B_{r}(x)| \\
		&=  \frac{2 C_{0} \beta(4r)}{\beta(r)} \sum_{x\in X_0} |A\cap B_{r}(x)|.
	\end{align*}
	
	For any $x \in X_{0}$ there exists a (lifted) coordinate system $\pi_{ij} \colon \hat{U}_{i} \to V_{ij} = \hat{U}_{i}/G_{ij}$ with $B_{3r}(x) \subseteq V_{ij}$. Fixing $\hat{x} \in \pi_{ij}^{-1}(x)$, we compute 
	\[
	|\pi_{ij}^{-1}(A) \cap B_{r}(\hat{x})| \leq |\pi_{ij}^{-1}(A \cap B_{r}(x))| = |G_{ij}| |A \cap B_{r}(x)| = \frac{|G_{ij}|}{2C_{0}} |B_{r}(x)|.
	\]
	It is easily checked that
	\[
	|B_{r}(x)| = \frac{1}{|G_{ij}|} |\pi_{ij}^{-1} (B_{r}(x))| \leq  \frac{1}{|G_{ij}|} \sum_{z \in \pi_{ij}^{-1}(x)} |B_{r}(z)| \leq |B_{r}(\hat{x})|,
	\]
	which shows that
\begin{align}\label{est}
	|\pi_{ij}^{-1}(A) \cap B_{r}(\hat{x})| \leq \frac{1}{2} |B_{r}(\hat{x})|.
\end{align}
From Lemmas \ref{isop est} and \ref{mdba} we derive that
	\begin{align}\label{isop lift}
		\frac{|\partial A \cap B_{r}(x)|}{|A \cap B_{r}(x)|} \geq \frac{1}{|G_{ij}|} \frac{| \pi_{ij}^{-1}(\partial A) \cap B_{r}(\hat{x})|}{|\pi_{ij}^{-1}(A) \cap B_{r}(\hat{x})|} \ge \frac{C}{C_{0}}
	\end{align}
	for any $x \in X_{0}$, where we used that $G_{ij}$ is a subgroup of $G_{i}$. Hence
	\begin{equation}\label{ax5}
	\begin{split}
	|U_{2r}(A_0)| 
	&\le \frac{2 C_{0}^{2}\beta(4r)}{C\beta(r)} \sum_{x\in X_0} |\partial A\cap B_{r}(x)| \\
	&\le \frac{2 C_{0}^{2}\beta(4r)}{C\beta(r)} |\partial A| \\
	&\le \frac{2 C_{0}^{2}\beta(4r)}{C\beta(r)} \delta |A| \le \varepsilon |A|
	\end{split}
	\end{equation}
	where we use that $A$ satisfies \eqref{a}.
	
	Since any curve from $A_+$ to $A_-$ passes through $A_0$, $A_+$ has distance at least $2r$ to $A_-\setminus U_{2r}(A_0)$.
	Hence $A_-\setminus U_{2r}(A_0)$ is covered by the open balls $B_{2r}(x)$ with $x\in X_-$. 
	
	With $Y$ as above, we let $Z=X_0\cup Y$. Again, for any $x \in Z$ there exists a (lifted) coordinate system $\pi_{ij} \colon \hat{U}_{i} \to V_{ij} = \hat{U}_{i}/G_{ij}$ with $B_{3r}(x) \subseteq V_{ij}$. Arguing as above, using \eqref{ax2} and \eqref{ax3}, we readily see that
	\[
	|\pi_{ij}^{-1}(A)\cap B_{r}(\hat{x})| \leq \frac12 |B_{r}(\hat{x})|
	\]
	for any $\hat{x}\in \pi_{ij}^{-1}(x)$. Letting $A^c=O_{1}\setminus A$, we obtain from Proposition \ref{bgv} that
	\begin{equation*}
		\begin{split}
			|\pi_{ij}^{-1}(A)^c\cap B_{2r}(\hat{x})|
			&\ge |\pi_{ij}^{-1}(A)^c\cap B_{r}(\hat{x})| \ge \frac12 |B_{r}(\hat{x})| \\
			&\ge \frac{\beta(r)}{2\beta(2r)}|B_{2r}(\hat{x})| \ge  \frac{\beta(r)}{2\beta(2r)}|\pi_{ij}^{-1}(A) \cap B_{2r}(\hat{x})| > 0.
		\end{split}
	\end{equation*}
	for any $x\in Z$. With the constant $C$ from Lemma \ref{isop est}, we therefore get
	\begin{equation}\label{az4}
	\begin{split}
	C
	&\le h^N(B_{2r}(\hat{x})) \\
	&\le \frac{|\pi_{ij}^{-1}( \partial A) \cap B_{2r}(\hat{x})|}{\min\{|\pi_{ij}^{-1}(A)\cap B_{2r}(\hat{x})|,|\pi_{ij}^{-1}(A)^c\cap B_{2r}(\hat{x})|\}} \\
	&\le \frac{2\beta(2r)}{\beta(r)} \frac{|\pi_{ij}^{-1}( \partial A) \cap B_{2r}(\hat{x})|}{|\pi_{ij}^{-1}(A)\cap B_{2r}(\hat{x})|} \\
	&\le \frac{2 C_{0} \beta(2r)}{\beta(r)} \frac{|\partial A\cap B_{2r}(x)|}{|A\cap B_{2r}(x)|}
	\end{split}
	\end{equation}
	for any $x\in Z$, and the last inequality follows similarly to \eqref{isop lift}. Using Corollary \ref{cover}, \eqref{az4} and \eqref{a}, we conclude that
	\begin{equation}\label{az5}
	\begin{split}
	|A\cap(A_-\setminus U_{2r}(A_0))|
	&\le \sum_{x\in Z}|A\cap B_{2r}(x)| \\
	&\le \frac{2 C_{0}\beta(2r)}{C \beta(r)} \sum_{x\in Z}|\partial A\cap B_{2r}(x)| \\
	&\le \frac{2 C(r) C_{0}\beta(2r)}{C \beta(r)}  |\partial A| \\
	&< \frac{2 C(r) C_{0}\beta(2r)}{C \beta(r)} \delta |A| \le \varepsilon |A|,
	\end{split}
	\end{equation}
	where we use \eqref{a} in the last step.
	
	Since $A\subseteq A_+\cup U_{2r}(A_0)\cup(A\cap(A_-\setminus U_{2r}(A_0)))$, we obtain
	\begin{align*}
		|A_+|
		&\ge |A| - |U_{2r}(A_0)| - |A\cap(A_-\setminus U_{2r}(A_0))| \\
		&\ge (1-2\ve)|A|.
	\end{align*}
	In particular, $A_+$ is not empty. Since $\partial A_{+} \subseteq A_{0}$, we conclude that
	\begin{align*}
		|U_{2r}(\partial A_+)| \le |U_{2r}(A_0)| \le \ve|A|
		\le \frac{\ve}{1-2\ve}|A_+|.
	\end{align*}
	In conclusion, $A_+$ is a bounded open subset of $O_{1}$ that satisfies the asserted inequality, albeit with $2\ve$ in place of $\ve$ (assuming w.l.o.g.\;that $\ve<1/4$). 
\end{proof}

We are now ready to prove the main result of the section. As a consequence of the Cheeger inequality, if $\lambda_{0}(O_{1}) = 0$, then $h(O_{1}) = 0$. We know from Lemma \ref{Buser} that for any $\varepsilon > 0$ and $r > 2 \diam O_{0}$ there exists an open, bounded $A \subseteq O_{1}$ with $|U_{3r}(A) \setminus A| \leq | U_{3r}(\partial A) | < \ve |A|$.

Fix a regular point $x \in O_{0}$ and consider the finite set $F := p^{-1}(x) \cap U_{r}(A)$. 
Taking into account that
\[
\diam D_{y} \leq 2 \diam O_{0} < r,
\]
it is immediate to verify that $A$ is contained in the union of $D_{y}$, with $y \in F$.
Moreover, given $g \in G_{r}$, it is easy to see that any $y \in F g \setminus F$ belongs to $U_{2r}(A) \setminus U_{r}(A)$, which shows that $U_{3r}(A) \setminus A$ contains the corresponding $D_{y}$. Using that $|D_{y}| = |O_{0}|$ and that the intersection of different $D_{y}$'s is measure zero, we conclude that
\[
|Fg \setminus F| |O_{0}| \leq |U_{3r}(A) \setminus A| < \varepsilon |A| \leq \varepsilon |F| |O_0|
\]
for any $g \in G_{r}$. Since any finite subset $G$ of $\pi_{1}^{\rm orb}(O_{0})$ is contained in $G_{r}$ for some $r > 2 \diam O_{0}$, this implies that the covering is amenable.

\section{Stability implies amenability: the role of $\lambda_{\ess}$}
\label{secgene}
Let $K_0$ be a compact and connected Riemannian orbifold with non-empty boundary.
Let $p\colon K_1\to K_0$ be a Riemannian covering of orbifolds,
where we do not assume that $K_1$ is connected.
We assume that $\lambda_0(K_1)=\lambda_0(K_0)=0$,
where we recall the notation $\lambda_0(K)=\lambda(\Delta,K)$,
where we use the definition of $\lambda_0$ as the infimum of the usual Rayleigh quotients
over non-zero functions $f$ in $C^\infty_c(K_1)$ and $C^\infty_c(K_0)$,
respectively.
Note that we do not require that the functions $f$ vanish on the corresponding boundary.

Change the given Riemannian metric on $K_0$ in a neighborhood $U\cong[0,\ve)\times\partial K_0$
of $\partial K_0$ so that the new metric is a product metric $dr^2+g_0$ on $U$,
where $g_0$ is a Riemannian metric on $\partial K_0$,
and endow $K_1$ with the lifted metric.
Since $K_0$ is compact, the old and new Riemannian metrics on $K_0$ and $K_1$ are uniformly equivalent,
and hence $\lambda_0(K_1)=\lambda_0(K_0)=0$ with respect to the new metric.

Denote by $2K_0$ and $2K_1$ the doubles of $K_0$ and $K_1$.
Since the new Riemannian metrics above are product metrics in neighborhoods of the boundaries,
they fit together to define Riemannian metrics on  $2K_0$ and $2K_1$
so that $p$ extends to a Riemannian covering $2p\colon2O_1\to2O_0$.
Since $\lambda_0(K_1)=0$ with respect to the new metric
and test functions in $C^{\infty}_c(K_1)$ can be doubled to test functions in $\Lip_c(2K_1)$
with the same Rayleigh quotient, we get that $\lambda_0(2K_1)=0$.
Since $2K_0$ is closed, we conclude from \cref{amecc} that the covering $2p$ is amenable. It follows from Proposition \ref{amecom} that the restriction of $2p$ over $K_{0}$, which is the original covering $p$, is amenable. Hence, we arrive at the following:

\begin{thm}\label{amecb}
If $\lambda_0(K_1)=0$, then $p$ is amenable.
\end{thm}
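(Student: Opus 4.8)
The plan is to double everything and thereby reduce \cref{amecb} to the closed-base case \cref{amecc}. First I would normalize the metric near the boundary: since $K_0$ is compact, I can replace the given Riemannian metric on $K_0$ by one that, on a collar $U\cong[0,\ve)\times\partial K_0$ of $\partial K_0$ realized through adapted charts (cf.\ \cref{susdom}), has the product form $dr^2+g_0$, while agreeing with the original metric outside a slightly larger collar, and then lift this metric to $K_1$. By compactness of $K_0$, the old and new metrics on $K_0$, and hence their lifts on $K_1$, are uniformly bi-Lipschitz equivalent, so the corresponding Rayleigh quotients differ only by bounded multiplicative factors and $\lambda_0(K_1)=0$ is retained for the new metrics. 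Throughout I would use \cref{bspec3} in order to compute $\lambda_0$ by testing against compactly supported Lipschitz functions.

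Next I would form the doubles $2K_0$ and $2K_1$. The product structure of the collar guarantees that $2K_0$ carries a genuine orbifold structure and that the two copies of the modified metric glue to a smooth Riemannian metric on $2K_0$, and similarly for $2K_1$; moreover the covering $p$ extends to a Riemannian covering $2p\colon 2K_1\to 2K_0$. Here $2K_0$ is closed and connected, and $K_0$ sits inside $2K_0$ as a compact domain with smooth boundary whose preimage under $2p$ is one of the two copies of $K_1$, with $2p$ restricting to $p$ over it.

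Then I would verify $\lambda_0(2K_1)=0$. Given $f\in C^\infty_c(K_1)$ with small Rayleigh quotient, reflect it across $\partial K_1$ to a function $\tilde f$ on $2K_1$. In the product collar $\tilde f$ is continuous, its tangential derivatives match across $\partial K_1$, and only the normal derivative jumps, so $\tilde f\in\Lip_c(2K_1)$; moreover $\int_{2K_1}|\nabla\tilde f|^2=2\int_{K_1}|\nabla f|^2$ and $\int_{2K_1}\tilde f^2=2\int_{K_1}f^2$, so the two Rayleigh quotients agree. By \cref{bspec3} applied on $2K_1$ this yields $\lambda_0(2K_1)=0$.

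Finally, \cref{amecc} applies to the covering $2p\colon 2K_1\to 2K_0$ of the closed connected orbifold $2K_0$ and shows that $2p$ is amenable. Extending $K_0$ to an exhaustion of $2K_0$ by compact domains with smooth boundary and invoking the forward implication of \cref{amecom}, the restriction of $2p$ over $K_0$ is amenable as well; but this restriction is precisely the original covering $p\colon K_1\to K_0$. This proves \cref{amecb}. The step I expect to be the main obstacle is the doubling construction in the orbifold category: one must check that the product collar can be arranged compatibly with adapted charts so that $2K_0$ is a bona fide orbifold with a smooth Riemannian metric and that $p$ genuinely extends to an orbifold covering $2p$. On the analytic side, the only delicate point is confirming that the reflected function is Lipschitz rather than merely continuous, so that \cref{bspec3} legitimately allows it as a test function; the remaining steps are the uniform-equivalence estimate and direct appeals to \cref{amecc} and \cref{amecom}.
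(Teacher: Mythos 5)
Your proposal is correct and follows essentially the same route as the paper: modify the metric to be a product near $\partial K_0$, note that uniform equivalence of metrics preserves $\lambda_0(K_1)=0$, double to get a Riemannian covering $2p\colon 2K_1\to 2K_0$ over the closed orbifold $2K_0$, observe that reflected test functions show $\lambda_0(2K_1)=0$, apply \cref{amecc}, and then restrict via the forward direction of \cref{amecom}. The one technical point you flag (that the product collar makes the doubling work in the orbifold category) is exactly what the paper arranges by imposing the product metric and relying on the adapted charts of \cref{susdom}, where the group acts trivially on the normal coordinate.
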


\begin{proof}[Proof of \cref{amen}.\ref{name}]
Assume that
\begin{align*}
		\lambda_{0}(S_1,O_1) = \lambda_0(S_0,O_0)
\end{align*}
and, to arrive at a contradiction, that $p$ is non-amenable.
According to \eqref{bspec},
there exists a sequence $(f_{n})_{n \in \mathbb{N}}$ in $C^{\infty}_{c}(O_{1})$
with $L^2$-norm one and $R_{S_1}(f_{n}) \rightarrow \lambda_{0}(S_1,O_1)$.
Since the covering is non-amenable,
we obtain from \cref{amecom} that there exists a smoothly bounded, compact domain $K \subseteq O_0$ 
such that the covering $p\colon p^{-1}(K) \to K$ is non-amenable.
Then \cref{amecb} implies that $\lambda_0(p^{-1}(K)) > 0$.
	
We know from \cref{eigen} that there exists a positive $\vf_{0} \in C^{\infty}(O_{0})$
with $L^{2}$-norm one and $S_{0} \vf_{0} = \lambda_{0}(S_{0},O_{0}) \vf_{0}$.
Denote by $\vf_{1}$ the lift of $\vf_{0}$ to $O_{1}$ and by $L$ the renormalization of $S_1$
with respect to $\vf_{1}$.
Since $\vf_{1}$ is positive, we may write $f_{n} = h_{n} \vf_{1}$, and in view of Lemma \ref{renorm} we have that
	\[
	R_{L}(h_{n}) = \frac{\int_{O_{1}} | \nabla h_{n} |^{2} \vf_{1}^{2} }{\int_{O_{1}} h_{n}^{2} \vf_{1}^{2}} = R_{S_{1}}(f_{n}) - \lambda_{0}(S_{0},O_{0}) \rightarrow 0.
	\]
	Denoting by $c_{1}, c_{2} > 0$ the minimum and the maximum of $\vf_{0}$ on $K$, respectively, we have that
	\[
	\frac{\int_{p^{-1}(K)} | \nabla h_{n}|^{2} \vf_{1}^{2}}{\int_{p^{-1}(K)} h_{n}^{2} \vf_{1}^{2}}
	\geq \frac{c_{1}^{2}}{c_{2}^{2}} \lambda_{0}(p^{-1}(K)) > 0,
	\]
	which shows that
	\[
	\int_{p^{-1}(K)} h_{n}^{2} \vf_{1}^{2} \rightarrow 0
	\hspace{3mm}\text{and}\hspace{3mm}
	\int_{O_{1} \smallsetminus p^{-1}(K)} h_{n}^{2} \vf_{1}^{2} \rightarrow 1.
	\]
Let $K_{0}$ be a compact domain (of positive measure) in the interior of $K$
and consider $\chi_{0} \in C^{\infty}_{c}(O_{1})$ with $\chi_{0} = 1$ in a neighborhood of $K_{0}$
and $\supp \chi_{0} \subseteq K$.
Denote by $\chi_{1}$ the lift of $\chi_{0}$ to $O_{1}$ and set $h_n' = h_n(1 - \chi_1)$. 
It is not difficult to verify that
	\[
	\int_{O_{1}} (h_{n}^{\prime})^{2} \vf_{1}^{2} \rightarrow 1
	\hspace{3mm}\text{and}\hspace{3mm}
	\int_{O_{1}} |\nabla h_{n}^{\prime}|^{2} \vf_{1}^{2} \rightarrow 0,
	\]
and hence $R_{L}(h_{n}^{\prime}) \rightarrow 0$.
Now setting $f_n'=h_n'\vf_{1}$, we derive from \cref{renorm} that $R_{S_1}(f_n')\to\lambda_0(S_0,O_0)$.
It should be noticed that $\| f_n'\|_{L^{2}} \rightarrow 1$ and $\supp f_n'\cap p^{-1}(K_{0}) = \emptyset$.
	
It is easy to see that the sequence $(g_n)_{n \in\N}$ in $\Lip_{c}(O_0)$, 
	consisting of the pushdowns of $f_{n}^{\prime}$ as defined in \eqref{pushd},
	satisfies $\| g_{n} \|_{L^2} \rightarrow 1$, $\supp g_n\cap K_{0} = \emptyset$, 
	and $R_{S_0}(g_{n}) \rightarrow \lambda_{0}(S_{0},O_{0})$.
Now we see the role of $\lambda_{\ess}$.
Namely, by \cref{min seq}, the assumption that $\lambda_{0}(S_{0},O_{0}) < \lambda_{\ess}(S_{0},O_{0})$
yields that $g_{n} \rightarrow \varphi_{0}$ in $L^{2}(O_0)$,
	after passing to a subsequence if necessary.
	This is a contradiction since $\varphi_{0}$ is positive,
	whereas $K_{0}$ has positive measure and $\supp g_n\cap K_{0} = \emptyset$.
\end{proof}

\section{Conformally compact orbifolds}
\label{seccc}

Let $O=P\setminus\partial P$, $\partial P=\{\rho=0\}$, and $g=h/\rho^2$ as in the introduction.
Then the normalized gradient field $X=\nabla\rho/|\nabla\rho|$ of $\rho$ with respect to $h$
is well defined in a neighborhood of $\partial P$,
and $V=\rho X$ is the normalized gradient field of $\rho$ with respect to $g$.
The divergence of $V$ with respect to $g$ is given
\begin{align*}
	\dive_gV
	&= \frac{m}2V(\ln(1/\rho^2)) + \dive V \\
	&= \frac{m}2\frac\rho{|\nabla\rho|}d(\ln(1/\rho^2)(\nabla\rho) + \dive V \\
	&= - m|\nabla\rho| + \dive V \\
	&=  -(m-1)|\nabla\rho| + \rho\dive X.
\end{align*}

\begin{proof}[Proof of \eqref{coco}]
Since $\dive X$ is a smooth function in a neighborhood of $\partial P$ and $\partial P=\{\rho=0\}$,
we conclude that, for any $\ve>0$, there is a neighborhood $U$ of $\partial P$ such that
\begin{align*}
	\dive_gV = -(m-1)|\nabla\rho| \pm \ve \ge  -a(m-1) - \ve
\end{align*}
in $U$.
By the divergence formula, we have
\begin{align*}
	\min\dive V |D|_m \le \int_D\dive V
	= \int_{\partial D} \la V,\nu\ra \le |\partial D|_{m-1}
\end{align*}
for any compact domain in $U$ with smooth boundary and hence
\begin{align*}
	h_{\ess}(O) \ge (m-1)a.
\end{align*}
Using the Cheng eigenvalue comparison \cref{cheng} for orbifolds,
the proof of the inequalities $\lambda_{\ess}(\tilde O),\lambda_{\ess}(O)\le a^2(m-1)^2/4$
is the same as that for the corresponding inequalities for $\lambda_0$
in the case of manifolds in \cite[Theorem 1.10]{BMP2}.
The Cheeger inequality \eqref{china} then implies
the asserted equality $\lambda_{\ess}(O)=a^2(m-1)^2/4$.
\end{proof}

\newpage

\end{document}